\newtheorem {theorem}{Theorem}[section]
\newtheorem {lemma}[theorem]{Lemma}
\newtheorem {proposition}[theorem]{Proposition}
\newtheorem {corollary}[theorem]{Corollary}
\newtheorem {definition}[theorem]{Definition}
\newtheorem {question}[theorem]{Question}
\theoremstyle{remark}
\newtheorem {fact}[theorem]{Fact}
\newtheorem {remark}[theorem]{Remark}
\newtheorem {example}[theorem]{Example}
\def\Z {{\mathbb{Z}}}
\def\N {{\mathbb{N}}}
\def\R {{\mathbb{R}}}
\def\CP{\mathbb{CP}}
\def\tr{\operatorname{tr}}
\def\Tr{\operatorname{Tr}}
\def\bPsi{\underline{\Psi}}
\def\Kh{\operatorname{Kh}}
\def\KhRN{\operatorname{KhR}_N}
\def\KhR{\operatorname{KhR}}
\def\S{\mathcal{S}^N}
\def\Sz{\mathcal{S}_0^N}
\def\Sinf{\mathcal{S}_\infty^N}
\newcommand{\sunderline}[1]{\underline{#1\mkern-3mu}\mkern3mu }
\def\cSz{\sunderline{\mathcal{S}}_0^{N, \alpha}}
\def\cS{\sunderline{\mathcal{S}}_0^{N}}
\def\Sztwo{\mathcal{S}_0^2}
\def\gl{\mathfrak{gl}}
\def\glN{\gl_N}
\def\del{\partial}
\def\im{\operatorname{im}}
\def\k{\mathbbm{k}}
\def\Aut{\operatorname{Aut}}
\def\Hom{\operatorname{Hom}}
\def\End{\operatorname{End}}
\def\RW{H^{*, *}_{\operatorname{RW}}}
\newcommand{\CC}{\mathcal{C}}
\newcommand{\DD}{\mathcal{D}}
\newcommand{\HHz}{\mathrm{HH}_0}
\def\HH{\operatorname{HH}}
\def\Id{\operatorname{Id}}
\def\ap{\langle \alpha' \rangle}
\newcommand{\Tang}{\boldsymbol{\mathrm{Tang}}}
\newcommand{\Foam}{\boldsymbol{\mathrm{Foam}}_N}
\newcommand{\Foamdg}{\Foam^{\mathrm{dg}}}
\newcommand{\Foamtwo}{\boldsymbol{\mathrm{Foam}}_2}
\newcommand{\Foamtwodg}{\Foamtwo^{\mathrm{dg}}}
\newcommand{\KhRbracket}[1]{\left\llbracket #1 \right\rrbracket}
\newcommand{\Ta}{\boldsymbol{\mathrm{T}}}
\newcommand{\Ch}{\operatorname{Ch}}
\newcommand{\Chdg}{\Ch_{\mathrm{dg}}}
\newcommand{\modgrfr}{\mathrm{-mod}^{\mathrm{gr.fr.}}}
\newcommand{\arxiv}[1]{\href{https://arxiv.org/abs/#1}{\small  arXiv:#1}}
\newcommand{\doi}[1]{\href{https://dx.doi.org/#1}{{\small DOI:#1}}}
\newcommand{\googlebooks}[1]{(preview at \href{https://books.google.com/books?id=#1}{google books})}
\newcommand{\numdam}[1]{}
\newcommand{\zeroh}{0}
\newcommand{\oneh}{1}
\newcommand{\twoh}{2}
\newcommand{\threeh}{3}
\newcommand{\fourh}{4}
\newcommand{\fourm}{four}
\newcommand{\Wone}{W_1} 
\newcommand{\Wtwo}{W_2} 
\newcommand{\Wthree}{W_3} 
\newcommand{\Wfour}{W_4} 
\newcommand{\Wgeneric}{W} 
\newcommand{\Whandle}{W'} 
\tikzstyle directed=[postaction={decorate,decoration={markings,
    mark=at position #1 with {\arrow{>}}}}]
\tikzstyle rdirected=[postaction={decorate,decoration={markings,
    mark=at position #1 with {\arrow{<}}}}]
\tikzset{anchorbase/.style={baseline={([yshift=-0.5ex]current bounding box.center)}},
    tinynodes/.style={font=\tiny,text height=0.75ex,text depth=0.15ex},
    smallnodes/.style={font=\scriptsize,text height=0.75ex,text depth=0.15ex},
    >={Latex[length=1mm, width=1.5mm]}
  }
  \tikzset{
    partial ellipse/.style args={#1:#2:#3}{
        insert path={+ (#1:#3) arc (#1:#2:#3)}
    }
}
  \newcommand{\pu}{to [out=90,in=270]}
  \newcommand{\fsphere}[3]{
\draw[white, line width=1mm] (#1-#3,#2) to [out=270,in=180] (#1,#2-#3*2/3) to [out=0,in=270] (#1+#3,#2);
\draw (#1-#3,#2) to [out=270,in=180] (#1,#2-#3*2/3) to [out=0,in=270] (#1+#3,#2);
\draw[thick] (#1,#2) circle (#3);
}
\newcommand{\bsphere}[3]{
\draw[dashed] (#1-#3,#2) to [out=90,in=180] (#1,#2+#3*2/3) to [out=0,in=90] (#1+#3,#2);
}
\newcommand{\fgsphere}[3]{
\draw[white, line width=1mm] (#1-#3,#2) to [out=270,in=180] (#1,#2-#3*2/3) to [out=0,in=270] (#1+#3,#2);
\draw[opacity=.3] (#1-#3,#2) to [out=270,in=180] (#1,#2-#3*2/3) to [out=0,in=270] (#1+#3,#2);
\draw[thick, opacity=.3] (#1,#2) circle (#3);
}
\newcommand{\bgsphere}[3]{
\draw[dashed,opacity=.3] (#1-#3,#2) to [out=90,in=180] (#1,#2+#3*2/3) to [out=0,in=90] (#1+#3,#2);
}
\newcommand{\sphere}[3]{
\bsphere{#1}{#2}{#3}
\fsphere{#1}{#2}{#3}
}
\newcommand{\opentrefoil}[5]{
\begin{scope}[shift={(#1,#2-5.9)},yscale=#3,xscale=#4]
\draw[#5] (-0.3,5.15) to [out=150,in=270] (-1,6) \pu (-1,8) (0,8.5) to [out=270,in=110] (0.85,6.3)
(-0.7,6) to [out=20,in=160] (1,6) to   [out=340,in=60] (1.4,5) to [out=240,in=330] (0.3,4.85)
(0.85,5.7) to [out=240,in=30] (0,5) to   [out=210,in=300] (-1.4,5) to [out=120,in=200] (-1.15,5.85);
\end{scope}
}
\newcommand{\bentline}[3]{
\draw[#3] (-7*#1,2*#1+#2) to [out=30,in=180] (0*#1,3*#1+#2) to [out=0,in=150] (7*#1,2*#1+#2);
\draw[#3,dotted] (-8*#1,1.5*#1+#2) to [out=30,in=210] (-7*#1,2*#1+#2)  (8*#1,1.5*#1+#2) to [out=150,in=330] (7*#1,2*#1+#2);
}
\newcommand{\bentlinegap}[4]{
\draw[#3] (-7*#1,2*#1+#2) to [out=30,in=180] (0*#1,3*#1+#2) (7*#1+#4,3*#1+#2) to [out=0,in=150] (14*#1+#4,2*#1+#2);
\draw[#3,dotted] (-8*#1,1.5*#1+#2) to [out=30,in=210] (-7*#1,2*#1+#2) (15*#1+#4,1.5*#1+#2) to [out=150,in=330] (14*#1+#4,2*#1+#2);
}
\newcommand{\bentlinegaps}[4]{
\draw[#3] (-7*#1,2*#1+#2) to [out=30,in=180] (-1*#1,3*#1+#2) (8*#1+#4,3*#1+#2) to [out=0,in=150] (14*#1+#4,2*#1+#2);
\draw[#3,dotted] (-8*#1,1.5*#1+#2) to [out=30,in=210] (-7*#1,2*#1+#2) (15*#1+#4,1.5*#1+#2) to [out=150,in=330] (14*#1+#4,2*#1+#2);
}
\definecolor{kwcolor}{rgb}{.05, .5, .3}
\definecolor{pwcolor}{rgb}{.15, .5, .5}
\definecolor{cmcolor}{rgb}{.8, 0, .05}
\begin{document}
\title{Skein lasagna modules and handle decompositions}

\author[Ciprian Manolescu]{Ciprian Manolescu}
\thanks{CM was supported by NSF Grant DMS-2003488 and a Simons Investigator Award.}
\address{Department of Mathematics, Stanford University, Stanford, CA 94305, USA}
\email{\href{mailto:cm5@stanford.edu}{cm5@stanford.edu}}

\author[Kevin Walker]{Kevin Walker}
\address{Microsoft Station Q, Santa Barbara, CA 93106, USA}
\email{\href{mailto:kevin@canyon23.net}{kevin@canyon23.net}}

\author[Paul Wedrich]{Paul Wedrich}
\thanks{PW acknowledges support by the Deutsche Forschungsgemeinschaft (DFG, German Research Foundation) under Germany's Excellence Strategy - EXC 2121 ``Quantum Universe'' - 390833306.}
\address{Fachbereich Mathematik, Universit\"at Hamburg, Bundesstra{\ss}e 55,
20146 Hamburg, Germany}
\email{\href{mailto:paul.wedrich@uni-hamburg.de}{paul.wedrich@uni-hamburg.de}}



\begin{abstract}
The skein lasagna module is an extension of Khovanov--Rozansky homology to the
setting of a four-manifold and a link in its boundary. This invariant plays the
role of the Hilbert space of an associated fully extended
$(4+\epsilon)$-dimensional TQFT. We give a general procedure for expressing the
skein lasagna module in terms of a handle decomposition for the four-manifold.
We use this to calculate a few examples, and show that the skein
lasagna module can sometimes be locally infinite dimensional.
\end{abstract}
\maketitle

\section{Introduction}
Homological invariants such as Khovanov homology \cite{Kh} and Khovanov-Rozansky
homology \cite{KhR} are at the center of modern knot theory. These invariants
were originally defined for links in $\R^3$. Extending them to links in
arbitrary $3$-manifolds is a problem that garnered much attention recently, from
various perspectives (categorification at roots of unity \cite{KhQ, QE, QRSW},
theoretical physics \cite{WittenKh, GPV, GPPV}, etc.)

One such extension was introduced in \cite{MWW}, based on higher category theory
and using the concept of blob homology \cite{MWblob}. Given a smooth, oriented,
compact \fourm-manifold $\Wgeneric$ and a framed oriented link $L$ in the boundary $\del \Wgeneric$,
the construction in \cite{MWW} associates to the pair $(\Wgeneric, L)$ a 
homology theory graded by $\Z^3 \times H_2(\Wgeneric, L; \Z)$ and denoted $\S_*(\Wgeneric; L)$. One of the three integer gradings is called the blob
degree, and for our purposes we will focus on the theory in blob degree zero,
$\Sz(\Wgeneric, L)$. This is called the {\em skein lasagna module} of $(\Wgeneric, L)$ and has a
relatively simple definition, reminiscent of the definition of the skein module
of a $3$-manifold. The skein lasagna module is defined as the span of the {\em
lasagna fillings} of $\Wgeneric$ with boundary $L$, modulo an equivalence relation. The
lasagna fillings are certain decorated surfaces connecting $L$ to other links in
the boundaries of $4$-balls inside $\Wgeneric$, and the equivalences come from cobordism
maps in Khovanov-Rozansky homology.

Skein lasagna modules are challenging to compute. It was proved in \cite{MWW}
that when $\Wgeneric=B^4$, the invariant $\Sz(B^4; L)$ coincides with the
Khovanov-Rozansky homology of the link $L$. Further computational methods were
developed in \cite{MN}, with a focus on \twoh-handlebodies (four-manifolds
obtained from $B^4$ by attaching \twoh-handles). This allowed the calculation of
the skein lasagna modules (in some gradings) for \fourm-manifolds such as the
complex projective plane, and disk bundles over $S^2$.

In this paper, building on the work in \cite{MWW} and \cite{MN}, we give a new
formula for the skein lasagna module of a link in the boundary of an arbitrary
four-manifold. We start by choosing a handle decomposition for the
four-manifold. For simplicity, we may assume that we have a single \zeroh-handle. We
then study how the skein lasagna module changes under adding handles. Disjoint
unions, \fourh-handles and many cases of \twoh-handles were already studied in
\cite{MN}, so the main thing left is to understand \oneh- and \threeh-handles. 

With regard to \threeh-handles, we have the following:
\begin{theorem} 
\label{Thm:3h}
Suppose that we have a four-manifold $\Wgeneric$ with boundary $Y$, and let $\Whandle$ be the
result of attaching a \threeh-handle to $\Wgeneric$ along a sphere $S \subset Y$. Let also $L$ be a
framed link in $Y$ disjoint from $S$, and $L'$ the corresponding link in $\del \Whandle$.
The equator $J$ of $S$ splits the sphere into two hemispheres, each of which
induces a cobordism map from $\Sz(\Wgeneric; L \cup J)$ to $\Sz(\Wgeneric; L)$. Then, the skein
lasagna module $\Sz(\Whandle; L')$ is isomorphic to the coequalizer of these two
cobordism maps. (See Theorem~\ref{thm:3h} for a more precise statement.) 
\end{theorem}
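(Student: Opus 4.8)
The plan is to construct mutually inverse maps between $\Sz(\Whandle;L')$ and the coequalizer. Since $L$ is disjoint from $S$, attaching the \threeh-handle modifies $\del\Wgeneric$ only inside the attaching region $\nu(S)\cong S^2\times D^1$, so $L'=L$ as a link and every lasagna filling of $(\Wgeneric,L)$ is literally a lasagna filling of $(\Whandle,L')$; this inclusion defines a map $\Phi\colon\Sz(\Wgeneric;L)\to\Sz(\Whandle;L')$. Write $\mathrm{cap}_\pm\colon\Sz(\Wgeneric;L\cup J)\to\Sz(\Wgeneric;L)$ for the two cobordism maps obtained by gluing the hemisphere $S_\pm\subset S\subset\del\Wgeneric$, pushed slightly into $\Wgeneric$, onto the $J$-labelled boundary of a filling. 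The key geometric point is that $S$ bounds the core \threeh-ball $D^3\times\{0\}\subset\Whandle$: inside $\Whandle$ the two pushed-in hemispheres $\widehat S_+$ and $\widehat S_-$ agree along $J$ and together form a copy of $S$, so $\widehat S_+$ is isotopic to $\widehat S_-$ rel $J$ by sweeping across the core ball (which lies in the handle, disjoint from the interiors of the input balls of a given filling, so the isotopy can be taken to avoid them). Hence $\Phi\circ\mathrm{cap}_+=\Phi\circ\mathrm{cap}_-$, and $\Phi$ factors as $\bar\Phi\colon\operatorname{coeq}(\mathrm{cap}_+,\mathrm{cap}_-)\to\Sz(\Whandle;L')$.

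To see that $\bar\Phi$ is surjective and to build its inverse, I would use a general-position push-off. The cocore of the \threeh-handle is a properly embedded arc $c\subset\Whandle$, and $\Whandle\setminus\nu(c)$ is diffeomorphic to $\Wgeneric$, since $D^3\times D^1\setminus\nu(\{0\}\times D^1)\cong S^2\times D^1\times[0,1]$ is just a collar on the attaching region. Because a filling surface has codimension $2$ while $c$ has codimension $3$, after an ambient isotopy every lasagna filling of $(\Whandle,L')$ --- surface together with its input \fourh-balls --- may be taken disjoint from $c$ and hence pushed into $\Wgeneric$; this shows $\Phi$, and therefore $\bar\Phi$, is surjective. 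The same operation defines a candidate inverse $\rho\colon\Sz(\Whandle;L')\to\operatorname{coeq}(\mathrm{cap}_+,\mathrm{cap}_-)$: send a $\Whandle$-filling to the class, \emph{in the coequalizer}, of any of its push-offs into $\Wgeneric$. Granting that $\rho$ is well defined, it is immediate that $\rho\circ\bar\Phi=\operatorname{id}$ (a $\Wgeneric$-filling is already disjoint from $c$, so $\rho$ returns its own class) and $\bar\Phi\circ\rho=\operatorname{id}$ (a $\Whandle$-filling is equivalent in $\Whandle$ to any of its push-offs), so $\bar\Phi$ is the desired isomorphism.

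Everything therefore reduces to showing $\rho$ is well defined, which is the heart of the argument. Two things need checking: that the class of a push-off in the coequalizer is independent of the choices made, and that $\rho$ respects the defining equivalences of $\Sz(\Whandle;L')$. Those equivalences are generated by ambient isotopies and by local modifications supported inside embedded \fourh-balls; the latter can be kept away from $c$ once the balls are, so after pushing off they become equivalences within $\Wgeneric$ and die in the coequalizer. For isotopies --- and, likewise, for comparing two push-offs, which are related by an isotopy supported near $c$ --- I would put the one-parameter family in general position with respect to $c$: it then meets $c$ transversally at finitely many times, and between consecutive crossings the whole configuration stays off $c$ and descends to an honest isotopy in $\Wgeneric$. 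It remains to show that a single crossing of $c$ changes the pushed-off $\Wgeneric$-filling from $\mathrm{cap}_+(x)$ to $\mathrm{cap}_-(x)$ for one and the same $x\in\Sz(\Wgeneric;L\cup J)$: near such a crossing the filling surface is a small disk whose boundary, pushed off $c$, is a parallel copy of the equator $J$, and on the two sides of the crossing this disk is a push-in of $S_+$, respectively of $S_-$, while the remainder of the filling (carrying a $J$-labelled boundary) is the common $x$; compatibility of this cut-and-recap with the decorations follows from the functoriality of the Khovanov--Rozansky cobordism maps used to define $\Sz$. A crossing thus contributes exactly a relation $\mathrm{cap}_+(x)-\mathrm{cap}_-(x)$, which vanishes in the coequalizer.

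The step I expect to be the main obstacle is precisely this local analysis at a crossing of the cocore: pinning down the \fourm-dimensional model so that the vanishing circle is genuinely (isotopic to) the equator $J$ rather than some other curve, and tracking the decorations through the cut-and-recap so that the change is literally $\mathrm{cap}_+(x)-\mathrm{cap}_-(x)$ and not a coarser relation. Closely related is the Cerf-type bookkeeping needed to justify ``general position with respect to $c$'' for the full web of fillings, isotopies, and ball moves appearing in an equality in $\Sz(\Whandle;L')$, and to reduce always to crossings of the standard form above. These are the places where the argument must engage seriously with the definitions; the careful version is carried out in Theorem~\ref{thm:3h}.
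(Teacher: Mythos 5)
Your proposal follows essentially the same route as the paper's proof of Theorem~\ref{thm:3h}: factor the inclusion map through the coequalizer by observing that the two pushed-in hemispheres become isotopic across the core ball of the \threeh-handle, then build the inverse by isotoping fillings off the one-dimensional cocore and check well-definedness by putting isotopies in general position so that each transverse crossing of the cocore contributes exactly the relation $\mathrm{cap}_+ - \mathrm{cap}_-$. The points you single out as the main obstacles (identifying the vanishing circle with the oriented equator $J$, and the Cerf-type bookkeeping using a fixed system of input balls as in Lemma~\ref{lem:fixballs}) are precisely the points the paper's proof handles explicitly.
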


Next, we combine Theorem~\ref{Thm:3h} with the treatment of \twoh-handles in
\cite{MN} to get a general result, reducing the calculation of the skein lasagna
module to the case of \oneh-handles. 

Recall that in \cite{MN}, the skein lasagna module of a \twoh-handlebody was shown
to be isomorphic to the so-called {\em cabled Khovanov-Rozansky} of the
attaching link for the \twoh-handles; this is obtained from the Khovanov-Rozansky
homologies of the cables of this attaching link, modulo certain cobordism
relations. We define an analogue of the cabled Khovanov-Rozansky homology for
two links $K, L$ in the boundary of $\Wone=\natural^m(S^1 \times B^3)$ (and, more
generally, any other four-manifold); we call this the {\em cabled skein lasagna
module} $\cS(\Wone; K, L)$. 

\begin{theorem}
\label{Thm:main}
Consider \fourm-manifolds
$ \Wone \subseteq \Wtwo \subseteq \Wthree \subseteq \Wfour$ where \begin{itemize}
\item $\Wone=\natural^m(S^1 \times B^3)$ is the union of $m$ one-handles;
\item  $\Wtwo$ is obtained from $\Wone$ by attaching $n$ two-handles along a framed link
$K$;
\item $\Wthree$ is obtained from $\Wtwo$ by attaching $p$ three-handles along spheres
$S_1, \dots S_p$;
\item $\Wfour$ be obtained from $\Wthree$ by attaching some four-handles.
\end{itemize}
Consider also a framed link $L \subset \del \Wfour$, and view $K \cup L$ as a link
in $\del \Wone$. Then, the skein lasagna module $ \Sz(\Wfour; L)$ is isomorphic to the
quotient of the cabled skein lasagna module $ \cS(\Wone; K, L)$ by coequalizing
relations coming from the \threeh-handles as in Theorem~\ref{Thm:3h}. (See
Theorem~\ref{thm:main} for a more precise statement.) 
\end{theorem}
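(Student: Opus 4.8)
The plan is to work down the handle decomposition of $\Wfour$ from the top, assembling the isomorphism from three ingredients: invariance of skein lasagna modules under \fourh-handles and the \twoh-handle formula, both from \cite{MN}, together with Theorem~\ref{Thm:3h} for the \threeh-handles. Since every lasagna filling may be isotoped off the cores of the \fourh-handles, \cite{MN} gives $\Sz(\Wfour; L) \cong \Sz(\Wthree; L)$, so it suffices to compute the skein lasagna module of $\Wthree$.

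Next I would peel off the \threeh-handles one at a time. Let $J_1, \dots, J_p$ be the equators of the attaching spheres $S_1, \dots, S_p$, which we may take to be pairwise disjoint after a preliminary isotopy; each $J_i$, together with a choice of bounding hemisphere, supplies the cobordism data called for by Theorem~\ref{Thm:3h}. Writing $\Wthree = (\Wtwo \cup h_1 \cup \cdots \cup h_{p-1}) \cup h_p$ and applying Theorem~\ref{Thm:3h} to $h_p$, then recursing on the remaining \threeh-handles, realizes $\Sz(\Wthree; L)$ as an iterated coequalizer; the modules that appear are $\Sz(\Wtwo; L')$ for links $L'$ obtained from $L$ by adjoining some of the $J_i$, and the maps are hemisphere cobordism maps. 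The key input is a commutation lemma: for $i \ne j$, the map induced by a hemisphere of $S_i$ commutes with the map induced by a hemisphere of $S_j$, because --- the spheres being disjoint --- the two composite cobordisms agree up to isotopy rel boundary and the skein lasagna module is functorial under such cobordisms. Granting the lemma, an induction on the number of \threeh-handles collapses the iterated coequalizer: at each stage the commutation relations are exactly what is needed to see that the remaining hemisphere maps descend to the quotient assembled so far, and they impose no further relations. One arrives at
\[
\Sz(\Wthree; L)\ \cong\ \Sz(\Wtwo; L)\ \Big/\ \sum_{i=1}^{p} \im(f_i - g_i),
\]
where $f_i, g_i \colon \Sz(\Wtwo; L \cup J_i) \to \Sz(\Wtwo; L)$ are the two hemisphere cobordism maps of $S_i$.

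Finally I would descend from $\Wtwo$ to $\Wone$. As $\Wtwo$ is obtained from $\Wone = \natural^m(S^1 \times B^3)$ by attaching \twoh-handles along $K$, the \twoh-handle formula --- the extension of the main theorem of \cite{MN} identifying the skein lasagna module of such a handlebody with the cabled skein lasagna module --- supplies isomorphisms $\Sz(\Wtwo; M) \cong \cS(\Wone; K, M)$ natural in the link $M \subset \del\Wtwo$; we apply this to $M = L$ and to $M = L \cup J_i$. Transporting the presentation of the preceding step through these natural isomorphisms rewrites $\Sz(\Wthree; L)$ as the quotient of $\cS(\Wone; K, L)$ by the coequalizing relations associated to the \threeh-handles exactly as in Theorem~\ref{Thm:3h}; combined with the first step, this yields the asserted isomorphism for $\Sz(\Wfour; L)$. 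The grading shifts introduced by the cobordism maps are recorded along the way but do not affect the structure of the argument.

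The step I expect to demand the most care is the reduction of the \threeh-handles: making the iterated-coequalizer description precise and, above all, establishing the commutation lemma for hemisphere cobordism maps of disjoint attaching spheres in a form strong enough to compare the two time-orderings of the relevant cobordisms (which are themselves built from \oneh-handle and product pieces). Closely tied to this is the bookkeeping needed to regard each equator $J_i$, and its bounding hemispheres, as data in $\del\Wtwo$ --- they must be pushed down past the earlier \threeh-handles, and one must keep track of how they meet the \twoh-handles --- so that the identification of the last step genuinely applies to each $\Sz(\Wtwo; L \cup J_i)$ and to the hemisphere maps between these modules.
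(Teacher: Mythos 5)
Your proposal is correct and follows essentially the same route as the paper: remove the \fourh-handles via Proposition~\ref{prop:threefour}, apply Theorem~\ref{thm:3h} once per \threeh-handle, and transport the resulting relations through the isomorphism $\Phi$ of Theorem~\ref{thm:12h} to land in the cabled skein lasagna module. The paper compresses the multi-handle step to ``the generalization to multiple \threeh-handles is straightforward''; your commutation lemma for hemisphere cobordism maps of disjoint attaching spheres, together with the induction that collapses the iterated coequalizer, is exactly the content hiding behind that remark.
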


The cabled skein lasagna module $ \cS(\Wone; K, L)$ is constructed from the
invariants $\Sz(\Wone; K(a, b) \cup L)$ where $K(a, b) \cup L$ is a family of framed
links in $\del \Wone = \#^m (S^1 \times S^2)$ consisting of $L$ and cables $K(a, b)$
of the attaching link $K$ for the \twoh-handles. Thus, Theorem~\ref{Thm:main}
allows us to express $\Sz(\Wfour; L)$ in terms of skein lasagna modules of links in
$\del \Wone$ (and maps between them).  

The second half of our paper studies in more detail the skein lasagna modules
for links in $\del \Wone$ where $\Wone=\natural^m(S^1 \times B^3)$. We work with
coefficients in a field $\k$. By cutting along the cocores of the \oneh-handles,
we reduce the problem of computing $\Sz(\Wone; L, \k)$ to a problem about skein
lasagna modules for the (boundary of the) \zeroh-handle $B^4$ with a family of
framed links related to $L$. For links in $B^4$, the invariant $\Sz$
is simply the Khovanov-Rozansky homology $\KhRN$. 

\begin{theorem}
    \label{Thm:mainonehandle}
Let $\Wone = \natural^m(S^1 \times B^3)$ with a nullhomologous link $L\subset \del
\Wone$ in the boundary that intersects the belt spheres of the \oneh-handles
transversely in $2p_i$ points for $1\leq i\leq m$. Let $R\subset S^3 \setminus
\bigsqcup_i (B_i\cup \overline{B_i})$ denote the tangle obtained from $L$ by
cutting open along the belt spheres. Then, the skein lasagna module $\Sz(\Wone; L,
\k)$ is isomorphic to the quotient 
$$
\bigoplus_{\substack{\mathrm{tangles } ~ T_i \\|\del T_i|=2p_i}} 
\KhRN(R\cup \bigsqcup_i (T_i \sqcup \overline{T_i}), \k) 
\{(\textstyle\sum_i p_i)(N-1)\}  
\big/ \sim
$$
where $\{ \cdot \}$ denotes a grading shift, and the relation $\sim$ is given by
taking coinvariants for the actions of certain categories $\Sz(B^3;P_{p_i})$
associated to the configurations $P_{p_i}$ of $p_i$ positively oriented and
$p_i$ negatively oriented points in $S^2 =\del B^3$. (See
Theorem~\ref{thm:mainonehandle} for a more precise statement.)
\end{theorem}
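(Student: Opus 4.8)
The plan is to reduce the computation of $\Sz(\Wone; L, \k)$ to that of Khovanov--Rozansky homology of tangles in $S^3$ by cutting along the cocores of the $m$ one-handles, mirroring the cutting-along-cocores strategy that was used for two-handles in \cite{MN}. Concretely, each one-handle $S^1 \times B^3$ has a cocore $D^3_i$ whose boundary is the belt sphere $S^2_i \subset \del \Wone$; cutting $\Wone$ along the disjoint union $\bigsqcup_i D^3_i$ yields the zero-handle $B^4$, and correspondingly cutting the link $L$ along the belt spheres $S^2_i$ produces the tangle $R$ in $S^3 \setminus \bigsqcup_i(B_i \cup \overline{B_i})$. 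The first step is to make precise how a lasagna filling of $\Wone$ with boundary $L$ decomposes when pushed off and made transverse to the cocores: a generic lasagna filling meets each cocore $D^3_i$ in a $1$-manifold, i.e.\ a tangle $T_i$ with $|\del T_i| = 2p_i$, and the two sides of the cut see $T_i$ and its mirror $\overline{T_i}$ respectively. This gives a surjection from $\bigoplus_{T_i} \KhRN(R \cup \bigsqcup_i(T_i \sqcup \overline{T_i}), \k)$ onto $\Sz(\Wone; L, \k)$, with the grading shift $\{(\sum_i p_i)(N-1)\}$ bookkeeping the normalization of the cap/cup cobordisms introduced at the cut loci (this matches the analogous shift in \cite{MN}).

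The second step is to identify the kernel of this surjection, i.e.\ the relation $\sim$. Two contributions should be identified when they differ by sliding part of a lasagna filling through a cocore $D^3_i$ — equivalently, by an isotopy of the filling in $\Wone$ that crosses the belt sphere, or by inserting/absorbing a lasagna input ball near the cocore. Such moves are governed by cobordism maps in the boundary $S^2_i = \del B_i$ relating the tangle endpoints on the two sides; the natural algebraic structure organizing these is the category $\Sz(B^3; P_{p_i})$ acting on the endpoints configuration $P_{p_i}$ of $p_i$ positive and $p_i$ negative points, and the quotient by $\sim$ is precisely the passage to coinvariants for these $m$ commuting actions. I would set this up by showing (a) the formula for $\Sz(B^4; \cdot) = \KhRN$ on the pieces, (b) that the one-handle attachment composed with the Mayer--Vietoris-type gluing formula for skein lasagna modules along a $3$-ball (as developed in \cite{MWW}) is exactly a relative tensor product / coinvariants over $\Sz(B^3; P_{p_i})$, and (c) that these local contributions assemble correctly over all $i$ simultaneously, using disjointness of the cocores.

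The main obstacle I anticipate is making the last point rigorous: controlling the interaction of lasagna fillings with all $m$ cocores at once, and verifying that the resulting relation is generated by the \emph{local} moves near each $D^3_i$ with no further global relations. This requires a careful transversality and isotopy-extension argument showing that any lasagna filling can be put in a standard position with respect to $\bigsqcup_i D^3_i$, that any two standard-position representatives of the same class in $\Sz(\Wone; L, \k)$ are connected by a sequence of elementary moves each supported near a single cocore (or in the complementary $B^4$, where the relation is the usual Khovanov--Rozansky cobordism relation already built into $\KhRN$), and that the grading shifts are consistent throughout. A secondary technical point is the well-definedness and functoriality of the $\Sz(B^3; P_{p_i})$-action — in particular that it is the action appearing in the gluing formula of \cite{MWW} specialized to $B^3 \times I$ — which I would handle by unwinding the definitions rather than by any new input. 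Once standard position and the move-completeness lemma are in place, the isomorphism follows by comparing generators and relations on the two sides.
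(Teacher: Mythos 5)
Your proposal takes the same route as the paper's proof: cut lasagna fillings along the cocores of the one-handles to obtain a surjective gluing map from a direct sum of $\KhRN$'s of cut-open links into $\Sz(\Wone; L, \k)$, and identify the kernel as coinvariants for the $\Sz(B^3;P_{p_i})$-actions defined by gluing fillings across the belt spheres. The transversality/move-completeness issue you flag as the main obstacle is handled in the paper via Lemma~\ref{lem:fixballs} (fixing input balls away from the cocores) and then classifying elementary moves by whether they are supported away from or near a cocore, which is exactly the standard-position argument you anticipate.
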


Furthermore, we will show that the isomorphisms from
Theorem~\ref{Thm:mainonehandle} are functorial in the following sense: They
allow an expression of maps associated to cobordisms $S\subset \del \Wone \times I$
between links $S\colon L\to L'$ in  $\del \Wone = \#^m(S^1 \times S^2)$ in terms of
components computed entirely from maps associated to link cobordisms in $S^3$. 

By combining Theorems~\ref{Thm:main} and \ref{Thm:mainonehandle} (plus the
functoriality statement), we thus obtain a recipe for expressing the lasagna
skein modules of any four-manifold in terms of Khovanov--Rozansky homologies of
links in $S^3$ and maps associated to cobordisms in $S^3 \times I$. The
invariant is a quotient of a (typically infinite) direct sum of homologies of
links by a subspace defined in terms of link cobordism maps. 

\begin{remark}
    Although the invariant $\Sz(\Wgeneric; L,\k)$ for any four-manifold $\Wgeneric$ can be expressed purely in
terms of link homology in $S^3$, specifically $\KhR_N$, it would be difficult to
prove directly that these expressions yield a four-manifold invariant. A direct
proof of invariance, without comparing to the intrinsically defined invariants
$\Sz$, would require checking handle slide and handle cancellation moves as well
as higher coherence conditions between their composites. Handle slides for
2-handles are studied (for $N=2$) in \cite{HRW} and instances of $(2,3)$-handle
cancellation are discussed in Example~\ref{ex:cancellation}. Another interesting
question concerns the behaviour of our algebraic decription of $\Sz(\Wgeneric; L,\k)$ under
reversing the handle decomposition of $\Wgeneric$. However, our approach uses
transversality arguments to isotope skeins away from cocores of handles to yield
simplified handle formulas; hence, we do not expect these formulas to reflect the duality between
$k$- and $(4-k)$-handles, because the duality does not respect cocores.
\end{remark}

Specializing the setting of Theorem~\ref{Thm:mainonehandle} to the case of a
single \oneh-handle, we consider the link $S^1 \times P_p \subset S^1 \times B^3$
consisting of $2p$ parallel circles, with $p$ of them oriented one way and $p$
the other way. We prove that $\Sz(S^1 \times B^3, S^1 \times P_p)$ is isomorphic
to the zeroth Hochschild homology of the category $\Sz(B^3;P_{p_i})$. From here
we get the following explicit calculation for $N=2$.

\begin{theorem} 
\label{Thm:Pp}
The skein lasagna module $\Sztwo(S^1 \times B^3; S^1 \times P_p, \k)$ is
\begin{enumerate}[(a)]
\item one-dimensional when $p=0$;
\item four-dimensional when $p=1$;
\item infinite dimensional when $p \geq 2$.  
\end{enumerate}
\end{theorem}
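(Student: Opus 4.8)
The plan is to feed the general result just stated---that $\Sztwo(S^1 \times B^3; S^1 \times P_p, \k)$ is the zeroth Hochschild homology $\HHz\big(\Sztwo(B^3; P_p)\big)$---into an explicit description of that category for $N = 2$. Since $\KhR_2$ is Khovanov homology, $\Sztwo(B^3; P_p)$ is controlled by the Bar--Natan cobordism category, equivalently by Khovanov's arc algebra $H^p$ attached to the $2p$ points of $P_p$: its objects are framed tangles in $B^3$ with boundary $P_p$, its morphisms are cobordism maps, and delooping together with Gaussian elimination collapses every object to a direct sum of grading--shifted, orientation--compatible crossingless matchings, with total endomorphism algebra $H^p$. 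The bookkeeping that must be done with care is the extra grading by $H_2(W, L)$: one works throughout in the graded category, and closing up along the $S^1$ factor produces the ``wrapping'' grading on the output, whose grading group is $H_2(S^1 \times B^3, S^1 \times P_p) \cong \Z^{2p-1}$. In this way the whole statement reduces to a graded cocenter computation for the finite--dimensional algebra $H^p$, refined by the wrapping grading.

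Parts (a) and (b) should then be quick. For $p = 0$ the link is empty, $H_2(S^1 \times B^3) = 0$, and $\Sztwo(B^3; \emptyset)$ is equivalent to graded $\k$--modules---every closed diagram deloops to shifted empty diagrams---so its cocenter is $\k$, which is (a). For $p = 1$ the category is generated by a single ``arc'' object with endomorphism algebra $H^1 \cong \k[X]/(X^2)$, whose cocenter is two--dimensional; the wrapping grading, which is $\Z$ in this case, is occupied in exactly two of its degrees, doubling the answer to four dimensions. I would pin down the constant ``$4$'' either by writing an explicit basis together with the coequalizing relations furnished by Theorem~\ref{Thm:mainonehandle} (specialized to $m = p = 1$), or, as a consistency check, by comparison with the four--dimensional annular Khovanov homology of the two--strand closure. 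This gives (b).

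Part (c) is the substance of the theorem, and the step I expect to be hardest is the lower bound. The plan is: first, present the module via Theorem~\ref{Thm:mainonehandle} (with $m = 1$) as the quotient of $\bigoplus_{T}\KhR_2\big(R \cup T \sqcup \overline{T}\big)\{p(N-1)\}$, the sum running over tangles $T$ with $|\del T| = 2p$, by the coinvariant relations for the $\Sztwo(B^3; P_p)$--action; second, isolate one bidegree (in quantum, homological, and wrapping degree) in which this quotient is infinite--dimensional, by exhibiting an explicit countable family of classes there and proving they are linearly independent. The heuristic for why such a family should appear precisely when $p \geq 2$ is that with two or more positive--negative strand pairs a lasagna filling can be rewired between pairs, allowing it to carry closed decorations that---in contrast to the merge/genus operator on $\k[X]/(X^2)$, which is nilpotent---are not forced to be nilpotent; with a single pair every closed decoration one can attach is killed by nilpotency, which is exactly the mechanism behind the finiteness in (a) and (b). The genuinely hard point is certifying that the proposed family is not annihilated by the coequalizing and coinvariant relations. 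My tactic there would be to construct a computable quotient of $\HHz\big(\Sztwo(B^3; P_p)\big)$---for instance by mapping onto an annular--type Khovanov invariant of the relevant cable, or onto the skein lasagna module of a four--manifold built from $S^1 \times B^3$ by attaching further handles---in which the infinitude in a fixed bidegree, i.e.\ local infinite--dimensionality, is manifest. Everything outside this single nonvanishing statement amounts to finite, if intricate, bookkeeping with $H^p$ and the cobordism category.
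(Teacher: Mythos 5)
Your overall framework---reduce to $\HHz$ of the $3$-ball category via Theorem~\ref{Thm:mainonehandle}, and describe that category in terms of the Bar--Natan/arc-algebra picture---is the same as the paper's. But the arguments you sketch for parts (b) and (c) both have gaps, and for (b) the gap is fatal as stated.

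For (b), the flaw is in treating $\HHz\bigl(\Sztwo(B^3;P_1)\bigr)$ as if it were essentially the cocenter of the algebra $H^1\cong\k[X]/(X^2)$, amplified by a ``wrapping grading.'' The category $\Sztwo(B^3;P_1)$ is a full subcategory of the (enriched) homotopy category of complexes of graded free $\k[X]/(X^2)$-modules, not the $1$-object category on $H^1$. The zeroth Hochschild homology of that \emph{ambient} homotopy category is actually infinite-dimensional: Proposition~\ref{prop:HH-cxpolyring} exhibits a basis $\{[\Id_{C_l}],[RX_{C_l}]: l\geq 0\}$ indexed by the indecomposable ``zig-zag'' complexes $C_l$. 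The reason the answer is only $4$ is a genuinely nontrivial structural fact: the complex of any $(1,1)$-tangle decomposes up to homotopy into shifts of $C_0$ and $C_1$ only, never $C_l$ with $l\geq 2$. The paper proves this via equivariant Khovanov homology over $\k[X]$ (the complexes $C^k$ there reduce mod $X^2$ to sums of $C_0$ and $C_1$). Once this is known, one restricts to the full subcategory on $C_0,C_1$, uses invariance of $\HHz$ under additive and idempotent completion (Fact~\ref{fact:trace2}), and gets exactly four classes. Your ``wrapping grading doubling from 2 to 4'' has no such mechanism behind it: the relevant indecomposables $C_0$ and $C_1$ both arise from the \emph{same} tangle type (the through strand) and thus occupy the same $H_2$-class, so the wrapping grading does not separate them. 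Without the knight-move classification, the computation does not close; it would instead give an infinite answer.

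For (c), the strategy of exhibiting infinitely many classes in a fixed degree is also what the paper does, but the crux---linear independence in the quotient---is exactly where your proposal stops short. The paper's device is the Chern character $K_0(\mathrm{Kar}(\Ta_2(s,t))^\oplus)\otimes_\Z\k\hookrightarrow\HHz$, which is injective because the completion is Krull--Schmidt with finite-dimensional endomorphism algebras over a perfect field (Propositions~\ref{prop:KS} and~\ref{prop:Chern}). Then one needs infinitely many pairwise non-isomorphic indecomposable direct summands of tangle complexes; the paper uses powers of a single Artin generator, indecomposable because their endomorphism algebras are local (isomorphic to $\End$ of the identity braid, $\bigl(\k[X]/(X^2)\bigr)^{\otimes p}$, by tensor-invertibility), and pairwise distinguished by an elementary direct check. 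Your alternative---find a computable quotient such as an annular invariant or a skein module of a larger $4$-manifold where the infinitude is manifest---is a reasonable idea in spirit, but it is not carried out, and it is far from clear that such a quotient detects a fixed-bidegree infinitude (recall that for $p=1$ the annular Khovanov homology $\HH_\bullet(\k[X]/(X^2))$ is infinite-dimensional while $\Sztwo$ is $4$-dimensional, so these comparisons can go either way). The Chern-character/Krull--Schmidt route sidesteps this entirely and should be the argument of record. Also note the paper works over a perfect field in (c); your proposal should carry that hypothesis as well.

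Part (a) is essentially fine, though the paper's actual proof (Corollary~\ref{cor:SoneBthree} via Lemma~\ref{lem:onehaway}) is a direct neck-cutting argument and does not need the Hochschild machinery.
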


Using methods analogous to those employed in part (a), we also show that
$\Sztwo(S^1 \times S^3, \k)$ is one-dimensional; see
Corollary~\ref{cor:SoneBthree}. For part (c), we actually show that $\Sztwo(S^1
\times B^3, S^1 \times P_p, \k)$ is infinite dimensional in bidegree $(0, 0)$.
This answers in the negative Question 1.7 from \cite{MN}, about whether skein
lasagna modules are always locally finite dimensional, i.e., finite dimensional in each fixed bidegree and homology class. 

This still leaves open the following:

\begin{question} If $\Wgeneric$ is simply connected, is $\Sz(\Wgeneric;L,\k)$ always locally
finite dimensional?
\end{question}

For $\Wone=  \natural^m(S^1 \times B^3)$, one can view the skein lasagna module
$\Sztwo(\Wone; L)$ as a variant of Khovanov  homology for links $L$ in $\#^m (S^1
\times S^2)$. Another version of Khovanov homology for these links was
constructed by Rozansky (for $m=1$) in \cite{Rozansky}, and Willis \cite{Willis}
for arbitrary $m$. The Rozansky--Willis homology $\RW(L)$ is finitely generated
in each bidegree and, thus, different from our theory. We expect that $\RW(L)$
appears on the $E_2$ page of a spectral sequence converging to $\Sztwo(\Wone; L)$.
See Section~\ref{sec:RW} for a further discussion and Section~\ref{sec:spec} for
a conjectural extension of the Rozansky--Willis homology to links in the
boundary of other \fourm-manifolds.

\medskip

{\bf Organization of the paper.} In Section~\ref{sec:prelim} we go over a few
preliminaries about skein lasagna modules and Kirby diagrams. In
Section~\ref{sec:23} we study the behavior of skein lasagna modules under
attaching \twoh- and \threeh-handles, proving Theorems~\ref{Thm:3h} and
~\ref{Thm:main}. In Section~\ref{sec:1h} we focus on \oneh-handles, and prove
Theorems~\ref{Thm:mainonehandle} and ~\ref{Thm:Pp}.
\medskip

{\bf Conventions.} All the manifolds considered in this paper will be smooth,
compact, and oriented. All links and surfaces are oriented and normally framed.

\medskip

{\bf Acknowledgements.} This paper builds on previous joint work and many
enlightening conversations of KW and PW with Scott Morrison, without which this
paper probably would not exist. We would also like to thank Matthew Hogancamp
and Ikshu Neithalath for helpful comments on a draft of this paper.   
\section{Preliminaries}
\label{sec:prelim}

\subsection{Skein lasagna modules}
We start by reviewing the construction of skein lasagna modules from \cite[Section 5.2]{MWW}. 

 Following \cite{MWW} and \cite{MN}, for a framed link $L \subset \R^3$, we
 write 
$$\KhRN(L) = \bigoplus_{i, j\in \Z} \KhRN^{i, j}(L)$$ for the $\gl_N$ version of
Khovanov-Rozansky homology. Here, $i$ denotes the homological grading and $j$
denotes the quantum grading. 

If we have an oriented manifold $S$ diffeomorphic to the standard $3$-sphere
$S^3$, and a framed link $L \subset S$, we can define a canonical invariant
$\KhRN(S,L)$ as in \cite[Definition 4.12]{MWW}. We sometimes drop $S$ from the
notation and simply write $\KhRN(L)$. 

Given a framed cobordism $\Sigma \subset S^3  \times [0,1]$ from $L_0$ to $L_1$,
there is an induced map
$$ \KhRN(\Sigma) \colon \KhRN(L_0) \to \KhRN(L_1)$$ which is homogeneous of bidegree
$(0, (1-N)\chi(\Sigma))$. 

Let $W$ be a \fourm-manifold and $L \subset \del W$ a framed link. A {\em lasagna
filling} $F=(\Sigma, \{(B_i,L_i,v_i)\})$ of $W$ with boundary $L$ consists of
\begin{itemize}
\item A finite collection of disjoint $4$-balls $B_i$ (called {\em input balls})
embedded in the interior or $W$;
\item A framed oriented surface $\Sigma$ properly embedded in $W \setminus
\cup_i B_i$, meeting $\del W$ in $L$ and meeting each $\del B_i$ in a link
$L_i$; and
\item for each $i$, a homogeneous label $v_i \in \KhRN(\del B_i, L_i).$
\end{itemize}
The bidegree of a lasagna filling $F$ is 
$$ \deg(F) := \sum_i \deg(v_i) + (0,(1-N)\chi(\Sigma)).$$ If $W$ is a $4$-ball,
we can define a cobordism map
$$ \KhRN(\Sigma)\colon \bigotimes_i \KhRN(\del B_i, L_i) \to \KhRN(\del W, L)$$ and
an evaluation
$$ \KhRN(F) := \KhRN(\Sigma)(\otimes_i v_i) \in \Kh(\del W, L).$$

We define the skein lasagna module as the bigraded abelian group
$$ \Sz(W; L) := \Z\{ \text{lasagna fillings $F$ of $W$ with boundary
$L$}\}/\sim$$ where $\sim$ is the transitive and linear closure of the following
relation: 
\begin{enumerate}[(a)]
\item Linear combinations of lasagna fillings are set to be multilinear in the
labels $v_i$;
\item Furthermore, two lasagna fillings $F_1$ and $F_2$ are set to be equivalent
if  $F_1$ has an input ball $B_i$ with label $v_i$, and $F_2$ is obtained from
$F_1$ by replacing $B_i$ with another lasagna filling $F_3$ of a $4$-ball such
that $v_i=\KhRN(F_3)$, followed by an isotopy rel $\del W$ (where the isotopy is
allowed to move the input balls):
\[
    \begin{picture}(0,0)%
\includegraphics{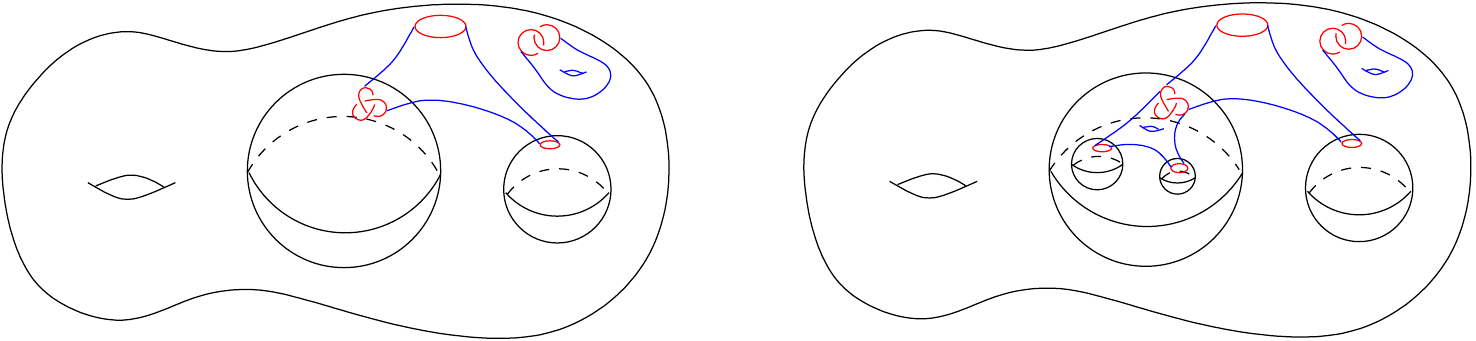}%
\end{picture}%
\setlength{\unitlength}{3158sp}%
\begingroup\makeatletter\ifx\SetFigFont\undefined%
\gdef\SetFigFont#1#2#3#4#5{%
  \reset@font\fontsize{#1}{#2pt}%
  \fontfamily{#3}\fontseries{#4}\fontshape{#5}%
  \selectfont}%
\fi\endgroup%
\begin{picture}(8837,2037)(674,-2732)
\put(7046,-1482){\makebox(0,0)[lb]{\smash{{\SetFigFont{10}{12.0}{\rmdefault}{\mddefault}{\updefault}{\color[rgb]{0,0,0}$v_k$}%
}}}}
\put(4957,-1778){\makebox(0,0)[lb]{\smash{{\SetFigFont{10}{12.0}{\rmdefault}{\mddefault}{\updefault}{\color[rgb]{0,0,0}$\sim$}%
}}}}
\put(7758,-2400){\makebox(0,0)[lb]{\smash{{\SetFigFont{10}{12.0}{\rmdefault}{\mddefault}{\updefault}{\color[rgb]{0,0,0}$B_i$}%
}}}}
\put(8878,-2320){\makebox(0,0)[lb]{\smash{{\SetFigFont{10}{12.0}{\rmdefault}{\mddefault}{\updefault}{\color[rgb]{0,0,0}$B_j$}%
}}}}
\put(7304,-2209){\makebox(0,0)[lb]{\smash{{\SetFigFont{10}{12.0}{\rmdefault}{\mddefault}{\updefault}{\color[rgb]{0,0,0}$F_3$}%
}}}}
\put(2947,-2408){\makebox(0,0)[lb]{\smash{{\SetFigFont{10}{12.0}{\rmdefault}{\mddefault}{\updefault}{\color[rgb]{0,0,0}$B_i$}%
}}}}
\put(4067,-2328){\makebox(0,0)[lb]{\smash{{\SetFigFont{10}{12.0}{\rmdefault}{\mddefault}{\updefault}{\color[rgb]{0,0,0}$B_j$}%
}}}}
\put(5898,-2390){\makebox(0,0)[lb]{\smash{{\SetFigFont{10}{12.0}{\rmdefault}{\mddefault}{\updefault}{\color[rgb]{0,0,0}$F_2$}%
}}}}
\put(1034,-2385){\makebox(0,0)[lb]{\smash{{\SetFigFont{10}{12.0}{\rmdefault}{\mddefault}{\updefault}{\color[rgb]{0,0,0}$F_1$}%
}}}}
\put(2569,-1328){\makebox(0,0)[lb]{\smash{{\SetFigFont{10}{12.0}{\rmdefault}{\mddefault}{\updefault}{\color[rgb]{0,0,0}$v_i$}%
}}}}
\put(4054,-1661){\makebox(0,0)[lb]{\smash{{\SetFigFont{10}{12.0}{\rmdefault}{\mddefault}{\updefault}{\color[rgb]{0,0,0}$v_j$}%
}}}}
\put(8858,-1661){\makebox(0,0)[lb]{\smash{{\SetFigFont{10}{12.0}{\rmdefault}{\mddefault}{\updefault}{\color[rgb]{0,0,0}$v_j$}%
}}}}
\put(7874,-1763){\makebox(0,0)[lb]{\smash{{\SetFigFont{10}{12.0}{\rmdefault}{\mddefault}{\updefault}{\color[rgb]{0,0,0}$v_l$}%
}}}}
\end{picture}%

\]
\end{enumerate}

For future reference, here is a useful lemma.
\begin{lemma}
\label{lem:fixballs} 
Let $W$ and $L$ be as above, and fix balls $R_1, \dots, R_n$, one in each
connected component of $W$. Then, the equivalence relation defining $\Sz(W; L)$
can be alternatively be described as the transitive and linear closure of the
following relation: 
\begin{itemize}
\item Linear combinations of lasagna fillings are set to be multilinear in the
labels $v_i$;
\item Lasagna fillings that are isotopic rel $\del W$ are set to be equivalent;
\item Two lasagna fillings are also set to be equivalent if they differ as in
(b) above, where the input ball $B_i$ is one of the chosen balls $R_1, \dots,
R_n$.
\end{itemize}
\end{lemma}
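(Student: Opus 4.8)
The plan is to show that the two equivalence relations on the set of $\Z$-linear combinations of lasagna fillings coincide. Call the original relation $\sim$ and the proposed alternative relation $\sim'$. Since isotopy rel $\del W$ is a special case of the move (b) (take $F_3$ to be a trivial filling of a small ball, or note that isotopies are already folded into (b)), and since the generating moves of $\sim'$ are clearly instances of the generating moves of $\sim$, we immediately get that $F_1 \sim' F_2$ implies $F_1 \sim F_2$. The content is the reverse inclusion: every instance of the general move (b) — replacing an \emph{arbitrary} input ball $B_i$ by a filling $F_3$ of a $4$-ball — must be realized, up to isotopy, as a composite of $\sim'$-moves, i.e.\ moves where the ball being modified is one of the fixed balls $R_1, \dots, R_n$.

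First I would reduce to a single connected component of $W$, since both relations are defined componentwise and a lasagna filling decomposes according to the components of $W$; so fix one component containing the chosen ball $R := R_j$. The key step is the following \textbf{sliding argument}: given a filling $F_1$ with an input ball $B_i$ carrying label $v_i$, and given a filling $F_3$ of a $4$-ball with $\KhRN(F_3) = v_i$, I want to produce the filling $F_2$ (obtained by inserting $F_3$ into $B_i$) from $F_1$ using only moves at $R$. The idea is: (i) using an isotopy rel $\del W$ supported in the component, shrink $B_i$ and slide it into the interior of $R$; call the resulting filling $\tilde F_1$, which is $\sim'$-equivalent to $F_1$ since isotopies are allowed. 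Now the ball $B_i$ sits inside $R$, so $R$ (minus $B_i$ and minus whatever other input balls happen to lie inside it) carries a lasagna filling of the $4$-ball $R$ whose outer evaluation is some label $w \in \KhRN(\del R, \cdot)$; write $F_R$ for this filling-of-$R$. (ii) Apply a $\sim'$-move at $R$: replace the contents of $R$ by the \emph{composite} lasagna filling of $R$ obtained from $F_R$ by further substituting $F_3$ into its inner ball $B_i$. By functoriality of the Khovanov--Rozansky cobordism maps under composition of lasagna fillings of $4$-balls — which is exactly the consistency built into the definition of $\KhRN(F)$ for fillings of a ball, cf.\ \cite[Section 4]{MWW} — the outer evaluation $w$ is unchanged, so this is a legal $\sim'$-move, and it produces precisely $\tilde F_2$, the version of $F_2$ with $B_i$ slid into $R$. (iii) Finally undo the isotopy to move things back, getting $F_2$ itself. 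The one bookkeeping point is that when we shrink and slide $B_i$ into $R$, other input balls of $F_1$ may need to be slid out of the way; but since there are finitely many input balls and the component is connected, a single ambient isotopy rel $\del W$ arranges the configuration so that $B_i \subset R$ and all other input balls are disjoint from $R$, which is all we need.

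The main obstacle I expect is purely a matter of careful bookkeeping rather than a conceptual difficulty: making precise that ``inserting $F_3$ into $B_i$'' and ``the composite filling of $R$'' interact correctly with the evaluation map, i.e.\ that the value $w = \KhRN(F_R)(\cdots \otimes v_i \otimes \cdots)$ really is preserved when $v_i$ is itself expanded as $\KhRN(F_3)$ of a filling. This is the associativity/coherence of the operad-like composition of lasagna fillings of balls, which is implicit in \cite{MWW}; I would cite it rather than reprove it. A secondary subtlety is the case $n \geq 2$ or input balls that straddle the isotopy — handled by noting $R_1, \dots, R_n$ lie in distinct components and an input ball, being connected, lies in exactly one component, so the reduction to a single component is clean. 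With these points addressed, the chain $F_1 \sim' \tilde F_1 \sim' \tilde F_2 \sim' F_2$ completes the proof, and taking transitive and linear closures of both relations gives the claimed equality of equivalence relations. $\qed$
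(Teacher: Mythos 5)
Your overall strategy (isotope the relevant input ball to the fixed ball in its component, then apply the restricted move there) is the same as the paper's, but step (ii) as you have written it is not a legal $\sim'$-move, and this is a genuine gap. The third bullet of the lemma, which is move (b) with $B_i$ required to be one of the fixed balls, still requires the fixed ball $R_j$ to be an \emph{input ball} of the lasagna filling: a ball that the lasagna sheet $\Sigma$ does not enter, carrying only a label in $\KhRN(\del R_j, \Sigma\cap \del R_j)$. In your setup you shrink $B_i$ so that it lies strictly inside $R$, so after the isotopy $R$ contains the genuine input ball $B_i$ \emph{plus} an annular piece of lasagna sheet between $\del B_i$ and $\del R$. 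Thus $R$ is not an input ball of $\tilde F_1$, and ``replace the contents of $R$ by another filling of $R$ with the same evaluation'' is not an instance of any of the three bullet points of the lemma.

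The slick fix, which is what the paper does, is to choose the isotopy so that $B_i$ is carried \emph{onto} $R_j$ (i.e.\ $\phi(B_i)=R_j$ exactly, not $\phi(B_i)\subsetneq R_j$). Then $R_j$ is an input ball of $\tilde F_1$ and one application of the third bullet, followed by the inverse isotopy, produces $F_2$; notably this route does not need the operad-type coherence of $\KhRN$ that you invoke, only isotopy invariance. Alternatively, your route can be repaired by decomposing step (ii) into two applications of the third bullet: first pass from $\tilde F_1$ to the filling $G'$ obtained by cutting out the contents $F_R$ of $R$ and leaving $R$ as a bare input ball labelled $w=\KhRN(F_R)$ (this is the third-bullet move, read in reverse, applied at $R$), and then insert $F_R\circ F_3$ into $R$ (the third-bullet move applied forwards, which is legitimate because $\KhRN(F_R\circ F_3)=w$ by the composition coherence you cite). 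Either patch is short, but as written the proof asserts a move that is not among the generators.
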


\begin{proof}
If $F_1$ and $F_2$ are equivalent as in the lemma, let us show that they are
equivalent as in the definition of the skein lasagna module. The only new
relation is the isotopy, which can be thought of as a particular instance of
(b), where $B_1$ is replaced by a slightly smaller ball with the same decoration
(and $F_3$ is a product cobordism). 

Conversely, if $F_1$ and $F_2$ are equivalent as in the definition of the skein
lasagna module, we only have to consider the case when they are related by (b).
We can then isotope $B_i$ to turn it into the ball $R_j$ in the same connected
component, and view (b) as a combination of the moves in the lemma.
\end{proof}

Skein lasagna modules decompose according to relative homology classes, as noted
in \cite[Section 2.3]{MN}:
\begin{align}
\label{eq:decompose0}
\Sz(W;L)=\bigoplus\limits_{\alpha\in H_2(W,L; \Z)} \Sz(W;L,\alpha).
\end{align}

Observe that in the case where $L$ is not null-homologous in $W$ (i.e. $[L] \neq
0 \in H_1(W; \Z)$), then there are no lasagna fillings, so $\Sz(W; L)=0$. When
$[L]=0\in H_1(W; \Z)$, consider the boundary map in the long exact sequence of
the pair $(W, L)$:
$$ \del: H_2(W, L; \Z) \to H_1(L; \Z).$$ The only classes $\alpha\in H_2(W,L;
\Z)$ that can contribute non-trivially are those that map to the fundamental
class $[ L] \in H_1(L; \Z)$ under $\del$.  Let us introduce the notation
$$ H_2^L(W; \Z) := \del^{-1}([L]) \subseteq H_2(W,L; \Z).$$ Note that, using the
long exact sequence of the pair, the difference of two classes in $H_2^L(W; \Z)$
can be identified with an element of $H_2(W; \Z)$. Thus, $H_2^L(W; \Z)$ is a
torsor over $H_2(W; \Z)$; it can be identified with the latter group after
choosing a base element in $H_2^L(W; \Z)$.

The decomposition ~\eqref{eq:decompose0} becomes
\begin{align}
\label{eq:decompose}
\Sz(W;L)=\bigoplus\limits_{\alpha\in H_2^L(W; \Z)} \Sz(W;L,\alpha).
\end{align}
We will use the decomposition~\eqref{eq:decompose} in the case of a general link
$L$; when $[L]\ne0$, we have $H_2^L(W; \Z)=\emptyset$ and $\Sz(W; L)=0$.

\subsection{Gluing and cobordisms} \label{sec:gluing} Let us consider two
\fourm-manifolds $W$ and $Z$ that have some part $Y$ of their boundaries in common,
as follows:
$$ \del W = Y \amalg Y_0, \ \ \del Z = (-Y) \amalg Y_1,$$ where $\amalg$ denotes
disjoint union. We can glue $W$ and $Z$ along $Y$ to form a new \fourm-manifold $W
\cup Z$ with boundary $Y_0 \amalg Y_1$. Suppose we are also given links $L_0
\subset Y_0$, $L_1 \subset Y_1$ and $L \subset Y$. Let $\overline{L} \subset -Y$ denote the mirror reverse of $L$. Then, we have a map
\begin{equation}
\label{eq:glue}
\Psi: \Sz(W; L \cup L_0) \otimes \Sz(Z; \overline{L} \cup L_1) \to \Sz(W \cup Z; L_0 \cup L_1)
 \end{equation}
obtained by gluing lasagna fillings along $L$:
$$ [F] \otimes [G] \mapsto [F \cup G].$$ It is easy to see that if two lasagna
 fillings $F_1$ and $F_2$ are equivalent in $W$, and $G_1$ and $G_2$ are
 equivalent in $Z$, then $F_1 \cup G_1$ and $F_2 \cup G_2$ are equivalent in $W
 \cup Z$, so \eqref{eq:glue} is well-defined.

Starting from here, we see that skein lasagna modules are functorial under
inclusions, in the following sense. We consider the case when $Y_0 =\emptyset$,
and we fix a  lasagna filling $G$ of $Z$ with boundary $\overline{L} \cup L_1$. We can
think of $Z$ as a cobordism from $Y=\del W$ to $Y_1$. Then, there is an induced
cobordism map
\begin{equation}
\label{eq:PsiZG}
 \Psi_{Z;G}=\Psi(\cdot \otimes [G]) : \Sz(W; L) \to \Sz(W \cup Z; L_1).
 \end{equation}

Observe that the maps \eqref{eq:PsiZG} behave well with respect to compositions: 
\begin{equation}
\label{eq:compose}
 \Psi_{Z'; G'} \circ \Psi_{Z; G} = \Psi_{Z \cup Z'; G \cup G'}.
  \end{equation}
  
Furthermore, in terms of the decompositions \eqref{eq:decompose}, given $\alpha
\in H^L_2(W; \Z)$, by attaching to it the class of $G$ in $H_2^{L \cup L_1}(Z;
\Z)$ we get a class $\alpha_1 \in H_2^{L_1}(W \cup Z; \Z)$. Then, $\Psi_{Z; G}$
maps $\Sz(W; L, \alpha)$ to $\Sz(W \cup Z; L_1, \alpha_1)$. We let
\begin{equation}
\label{eq:PsiZGa}
 \Psi_{Z;G, \alpha} : \Sz(W; L, \alpha) \to \Sz(W \cup Z; L_1, \alpha_1)
 \end{equation}
 denote the restriction of $\Psi_{Z; G}$.

When the lasagna filling $G$ consists of a surface $S$ (an embedded cobordism $S
\subset Z$ from $L$ to $L_1$) with no input balls, we will simply write $\Psi_{Z;
S, \alpha}$ for $\Psi_{Z; G, \alpha}$. Furthermore, we could decorate $S$ with
$n$ dots at a chosen location, for $0 \leq n \leq N-1$, as usual in $\gl_N$
foams; cf. \cite[Example 2.3]{MWW}. This corresponds to constructing a lasagna
filling $S(n \bullet)$ with $n$ input balls intersecting $S$ along unknots, each
decorated with the generator $$X \in \KhRN(U) \cong \Z[X]/( X^N ).$$ (This
filling is equivalent to one where we consider a single input ball intersecting
$S$ in an unknot, decorated with $X^n$.) When the chosen location of the dot
placement is clear from the context, then we denote the corresponding map by 
\begin{equation}
\label{eq:dots} 
\Psi_{Z;S(n \bullet), \alpha}:  \Sz(W; L, \alpha) \to \Sz(W \cup Z; L_1, \alpha_1).
\end{equation}

\subsection{Kirby diagrams}
Let $W$ be a smooth, oriented, connected, compact \fourm-manifold (possibly with
boundary). By standard Morse theory,  $W$ can be decomposed into $k$-handles for
$k=0, \dots, 4$, arranged according to their index $k$. Furthermore, without
loss of generality, we can arrange so that there is a unique \zeroh-handle, and the
number of \fourh-handles is either $0$ or $1$, according to whether $W$ has empty
boundary or not.

Denote the numbers of $1$-, $2$- and \threeh-handles by $m$, $n$ and $p$,
respectively. After attaching the \oneh-handles to the \zeroh-handle we get the
handlebody $\natural^m(S^1 \times B^3)$, with boundary $\#^m (S^1 \times S^2)$.
(Here, $\natural$ denotes the boundary connected sum, and $\#$ the usual
interior connected sum.) The attaching circles for the \twoh-handles form a link
$$ K \subset \#^m (S^1 \times S^2),$$ with components $K_1, \dots, K_n$. The
link also has a framing, which specifies how the \twoh-handles are attached. Once
these are attached, the boundary of the resulting manifold must be of the form
$Y \# p(S^1 \times S^2)$. Attaching the \threeh-handles gets rid of the $p$ summands
of $S^1 \times S^2$, so the resulting boundary is some $3$-manifold $Y$. In the
case $\del W \neq \emptyset$, we stop here and we have $\del W = Y$. In the case
where $W$ is closed, we must have $Y=S^3$ and we attach the \fourh-handle (a
four-ball) to $S^3$ at the last step to eliminate the boundary.

The handle decomposition allows us to represent $W$ by a {\em Kirby diagram}.
This consists of drawing $\#^m (S^1 \times S^2)$ as $m$ pairs of spheres in
$\R^3$, where  we think of the spheres in each pair as  identified to produce a
\oneh-handle (and we also add the point at infinity to $\R^3$). We then draw a
picture of the attaching link $K$ for the \twoh-handles, where the link can go
through the \oneh-handles. The framing of $K$ can be specified by drawing parallel
copies of the components of $K$. (The components that don't go through the
\oneh-handles can be viewed as living in $S^3$; for those, an alternative way to
specify the  framing is by an integer, which is the difference between the given
framing and the Seifert framing.) To determine $W$, in principle we should also
specify the attaching spheres for the \threeh-handles. These are usually not drawn
in the Kirby diagram. In the case where $\del W =\emptyset$, this leaves no
ambiguity, because there is a unique way to fill $\#^p (S^1 \times S^2)$ by
\threeh-handles and then by a \fourh-handle.

For example, we show here a Kirby diagram of $W=\CP^2 \# \CP^2$ with one \oneh-handle
and three \twoh-handles. For the attaching curve of the \twoh-handle that goes through
the \oneh-handle, we specified the framing by drawing a parallel copy by a dashed
curve; for the other \twoh-handles, we used numbers:
\[ 
\begingroup%
  \makeatletter%
  \providecommand\color[2][]{%
    \errmessage{(Inkscape) Color is used for the text in Inkscape, but the package 'color.sty' is not loaded}%
    \renewcommand\color[2][]{}%
  }%
  \providecommand\transparent[1]{%
    \errmessage{(Inkscape) Transparency is used (non-zero) for the text in Inkscape, but the package 'transparent.sty' is not loaded}%
    \renewcommand\transparent[1]{}%
  }%
  \providecommand\rotatebox[2]{#2}%
  \newcommand*\fsize{\dimexpr\f@size pt\relax}%
  \newcommand*\lineheight[1]{\fontsize{\fsize}{#1\fsize}\selectfont}%
  \ifx\svgwidth\undefined%
    \setlength{\unitlength}{208.93454147bp}%
    \ifx\svgscale\undefined%
      \relax%
    \else%
      \setlength{\unitlength}{\unitlength * \real{\svgscale}}%
    \fi%
  \else%
    \setlength{\unitlength}{\svgwidth}%
  \fi%
  \global\let\svgwidth\undefined%
  \global\let\svgscale\undefined%
  \makeatother%
  \begin{picture}(1,0.50127611)%
    \lineheight{1}%
    \setlength\tabcolsep{0pt}%
    \put(0,0){\includegraphics[width=\unitlength,page=1]{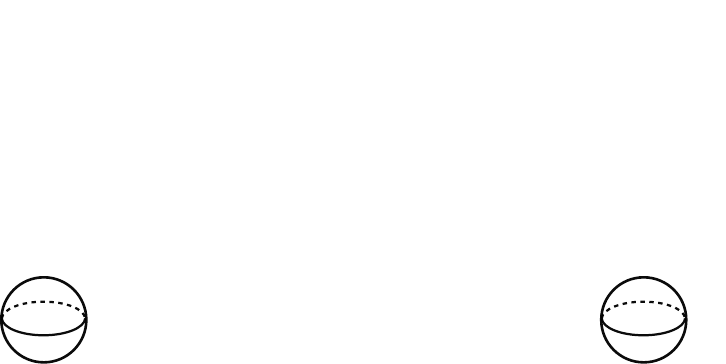}}%
    \put(0.52356608,0.46444498){\makebox(0,0)[lt]{\lineheight{1.25}\smash{\begin{tabular}[t]{l}$2$\end{tabular}}}}%
    \put(0.91194181,0.31182544){\makebox(0,0)[lt]{\lineheight{1.25}\smash{\begin{tabular}[t]{l}$1$\end{tabular}}}}%
    \put(0,0){\includegraphics[width=\unitlength,page=2]{kirbyd.pdf}}%
  \end{picture}%
\endgroup%
 \]

For more details about the subject, we refer to the book \cite{GS}.

\section{Two- and three-handles}
\label{sec:23}
\subsection{Two-handles}
\label{sec:12}
 The paper \cite{MN} contains a description of the skein lasagna module for
 \twoh-handlebodies (\fourm-manifolds $W$ made of a \zeroh-handle and some \twoh-handles),
 where the link $L \subset \del W$ is empty, or at least local (contained in a
 $3$-ball). The description is in terms of the Khovanov-Rozansky homology of
 cables of the attaching link $K$.
 
In this subsection we extend that description to the case where we attach
\twoh-handles to any \fourm-manifold $\Wgeneric$, to obtain a new manifold $\Whandle$.
Moreover, we do not impose any restriction on the link $L \subset \del \Wgeneric$. The
formula is very similar to that in \cite{MN}. The role of the Khovanov-Rozansky
homology $\KhR_N$ will be played by the skein lasagna module $\Sz(\Wgeneric; -)$, which can be
thought of as a link homology for links in the boundary of  $\Wgeneric$. (When $\Wgeneric=B^4$,
we have $\Sz(\Wgeneric; L) = \KhR_N(L)$.) 

Let $K_1, \dots, K_n$ be the components of the framed link $K \subset \del \Wgeneric$
along which the \twoh-handles are attached. The framing gives diffeomorphisms
$f_i$ between tubular neighborhoods $\nu(K_i)$ of each $K_i$ and $S^1 \times
D^2$. Given $n$-tuples of nonnegative integers
$$k^-=(k_1^-,\dots,k_n^-),\ \ \ k^+=(k_1^+,\dots,k_n^+),$$ we let $K(k^-,k^+)$
denote the framed, oriented cable of $K$ consisting of $k_i^-$ negatively
oriented parallel strands to $K_i$ and $k_i^+$ positively oriented parallel
strands. Here, the notion of parallelism for the strands is determined by the
framing, that is,
$$ K(k^-,k^+) = \bigcup_i f_i^{-1}(S^1 \times \{x_1^-, \dots, x_{k_i^-}^-,
x_1^+, \dots, x_{k_i^+}^+\})$$ for fixed points $ x_1^-, \dots, x_{k_i^-}^-,
x_1^+, \dots, x_{k_i^+}^+ \in D^2.$

After attaching \twoh-handles to $\Wgeneric$ along $K$, we obtain the manifold $\Whandle$. Suppose we
are given a framed link $L \subset \del \Whandle$. Generically, we can assume that $L$
stays away from the attaching regions of the \twoh-handles, and therefore we can represent it as
a link in $\del \Wgeneric$, disjoint from (but possibly linked with) $K$. (There are various ways of isotoping $L$ off of the attaching regions; the results of the calculation will be isomorphic.) We let
$$K(k^-, k^+) \cup L$$ be the union of $K(k^-,k^+)$ and $L$, where we do the
cabling on the components of $K$ by choosing the tubular neighborhoods of $K_i$
to be disjoint from $L$. (Note that $K(k^-, k^+) \cup L$ is not a split disjoint
union.)

We seek to express the skein lasagna module $\Sz(\Whandle; L)$ in terms of $\Sz(\Wgeneric;
K(k^-, k^+) \cup L)$. To do this, we need to introduce a few more notions.

For each $i$, let $B_{k_i^-, k_i^+}$ be the subgroup of the braid group on
$k_i^- + k_i^+$ strands that consists of self-diffeomorphisms of $D^2$ rel
boundary (modulo isotopy rel boundary) taking the set $\{x_1^-, \dots,
x_{k_i^-}^-\}$ to itself and the set $\{x_1^+, \dots, x_{k_i^+}^+\}$ to itself.
By taking the product with the identity on $S^1$, a braid element $b \in
B_{k_i^-, k_i^+}$ induces a self-diffeomorphism of $D^2 \times S^1$, which can
be pulled back (via $f_i$) to a self-diffeomorphism of $\nu(K_i)$. This gives a
group action
$$\beta_i : B_{k_i^-, k_i^+} \to \Aut(\Sz(\Wgeneric; K(k^-, k^+) \cup L)).$$

Let $e_i\in\Z^n$ denote the $i^{th}$ basis vector.  
Two strands parallel to $K_i$, if they have opposite orientations, co-bound a
ribbon band $R_i$ in $S^3$. By pushing $R_i$ into $S^3 \times [0,1]$ so that it
is properly embedded there, and taking the disjoint union with the identity
cobordisms on the other strands, we obtain an oriented cobordism (still denoted
$R_i$) from $K(k^-,k^+) \cup L$ to $K(k^-+e_i, k^++e_i)\cup L$. For $d=0, 1,
\dots, N-1$, we can decorate  $R_i$  with $d$ dots, and obtain a cobordism map
$$ \psi^{[d]}_i : \Sz(\Wgeneric;K(k^-,k^+) \cup L) \to \Sz(\Wgeneric;K(k^-+e_i, k^++e_i) \cup
L),$$ which changes the bigrading by $(0, 2d)$. 

Next, recall that we have a decomposition \eqref{eq:decompose} for the skein
lasagna module $\Sz(\Whandle; L)$, according to homology classes in $H_2^L(\Whandle; \Z)$. Let
us see how these homology classes are related to the similar ones in $\Wgeneric$.
Consider the tubular neighborhood $\nu(K) =\cup_i \nu(K_i)$, which is a union of
solid tori. Express $\Whandle$ as the union
$$ \Whandle = \Wgeneric \cup C \cup Z,$$ where $Z$ is the union of the new \twoh-handles, and
$C \cong \nu(K) \times [0,1]$ is a connecting cylinder between $\Wgeneric$ and $Z$. Let
also $$C' = \nu(K) \times \{0, 1\} \subset C.$$
We identify $\nu(K)$ with $\nu(K) \times \{0\}$ and denote $\nu(K) \times \{1\}$ by $\del_- Z$ (part of the boundary $\del Z$).

The Mayer-Vietoris sequence for $\Whandle$ relative to the union of $\Wgeneric$ and $Z \cup L$
reads
$$
 \cdots \to H_*(\Whandle, \Wgeneric\cap(Z \cup L); \Z) \to H_*(\Whandle,\Wgeneric; \Z) \oplus H_*(\Whandle, Z\cup L; \Z) \to H_*(\Whandle, \Wgeneric\cup (Z \cup L); \Z) \to \cdots
$$

Observe that, by excision, $H_3(\Whandle, \Wgeneric\cup (Z \cup L); \Z) \cong H_3(C, C'; \Z)=0$. From here we
obtain an exact sequence
\begin{equation}
\label{eq:MVi}
 0 \to H_2(\Whandle, L; \Z)\to H_2(Z, \del_-Z; \Z) \oplus H_2(\Wgeneric, \nu(K) \cup L; \Z)
\to H_2(C, C'; \Z).
\end{equation}
 Thus, an element in $H_2^L(\Whandle; \Z) \subseteq H_2(\Whandle, L; \Z)$
can be identified with its image in $ H_2(Z, \del_-Z; \Z) \oplus H_2(\Wgeneric, \nu(K)
\cup L; \Z)$, which we write as a pair $(\alpha, \eta)$. 

Let us further identify $H_2(Z, \del_-Z; \Z)$ with $\Z^n$ by letting the $i$th
handle correspond to the coordinate vector $e_i$. Then, we write 
$$ \alpha = (\alpha_1, \dots, \alpha_n) \in \Z^n$$ and let $\alpha^+$ denote its
positive part and $\alpha^-$ its negative part; i.e.,
$\alpha^+_i=\operatorname{max}(\alpha_i,0)$ and
$\alpha^-_i=\operatorname{min}(\alpha_i,0)$. We also let $|\alpha| = \sum_i
|\alpha_i|$. 

Let $r \in \N^n$ and consider the cable $K(r-\alpha^-,r+\alpha^+)$. The fact
that $(\alpha, \eta) \in \Z^n \oplus H_2(\Wgeneric, \nu(K) \cup L; \Z) $ is in the
kernel of the map to $H_2(C, C'; \Z)\cong \Z^n$ in \eqref{eq:MVi} implies the
existence of a (unique) class
$$ \eta^r \in H_2^{L \cup K(r-\alpha^-, r+\alpha^+)}(\Wgeneric; \Z)\subseteq H_2(\Wgeneric, L
\cup K(r-\alpha^-, r+\alpha^+); \Z)$$ which is sent to $\eta$ by the natural map
to $H_2(\Wgeneric, L\cup \nu(K); \Z).$

From now on, using the deformation retraction from $\nu(K)$ to $K$, let us think
of $\eta$ as a class in $H_2(\Wgeneric, K \cup L; \Z)$.

\begin{definition}
\label{def:cabled}
The cabled skein lasagna module of $K \subset \del \Wgeneric$ at level $\alpha$ and in
class $\eta$ is 
\[
\cSz(\Wgeneric; K, L, \eta) =  \Bigl( \bigoplus\limits_{r\in\N^n} \Sz( \Wgeneric; K(r-\alpha^-,r+\alpha^+) \cup L, \eta^r)\{(1-N)(2|r|+|\alpha|)\} \Bigr)/ \sim
\]
where the equivalence $\sim$ is the transitive and linear closure of the
relations
\begin{equation}
\label{eq:sim}
\beta_i(b)v \sim v, \ \ \psi^{[d]}_i(v) \sim 0 \text{ for } d < N-1,\ \ \psi^{[N-1]}_i(v) \sim v
\end{equation}
for all $i=1, \dots, n$; $b \in B_{k_i^-, k_i^+}$, and $v \in  \Sz(\Wgeneric;
K(r-\alpha^-,r+\alpha^+)\cup L, \eta^r).$
\end{definition}

\begin{theorem}
\label{thm:12h}
Let $\Wgeneric$ be a four-manifold and $L \subset \del \Wgeneric$ be a framed link. Let $\Whandle$ be
obtained from $\Wgeneric$ by attaching \twoh-handles along a framed link $K$ disjoint
from $L$. Then, for each $(\alpha, \eta) \in H_2^L(\Whandle; \Z)$, we have an
isomorphism
$$ \Phi:  \cSz(\Wgeneric;K, L, \eta) \xrightarrow{\phantom{a}\cong\phantom{a}}
\Sz(\Whandle;L,(\alpha, \eta)).$$
\end{theorem}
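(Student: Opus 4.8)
The plan is to mimic the strategy from \cite{MN}, where the analogous statement for $\Wgeneric = B^4$ was proven, but now relativized over the base manifold $\Wgeneric$. The key conceptual point is that attaching a $2$-handle along $K_i$ can be understood, at the level of lasagna fillings, as the operation of capping off a parallel family of strands by a disk (the core of the $2$-handle). Concretely, I would first construct the map $\Phi$ going from $\cSz$ to $\Sz(\Whandle; L, (\alpha,\eta))$: given a lasagna filling $F$ of $\Wgeneric$ with boundary $K(r-\alpha^-, r+\alpha^+) \cup L$ in class $\eta^r$, I push the cabled part of $\partial F$ near $K$ through the connecting cylinder $C$ and cap it off with $r$ (or $r + \alpha^{\pm}$) parallel copies of the cores of the $2$-handles in $Z$, together with ribbon bands pairing up oppositely oriented strands, producing a lasagna filling of $\Whandle$ with boundary $L$. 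The grading shift $\{(1-N)(2|r| + |\alpha|)\}$ is exactly the Euler characteristic contribution of these capping disks. One then checks this descends through the relations \eqref{eq:sim}: the braid relations $\beta_i(b) v \sim v$ hold because the capping disks are symmetric under the corresponding diffeomorphism of $D^2 \times S^1$; the dot relations $\psi_i^{[d]}(v) \sim 0$ for $d < N-1$ and $\psi_i^{[N-1]}(v)\sim v$ hold because a ribbon band with a closed disk cap (a sphere) evaluates via the relations in $\KhRN(U) \cong \Z[X]/(X^N)$, i.e. a sphere with $d$ dots is $0$ for $d<N-1$ and the identity for $d = N-1$ — this is the standard neck-cutting/sphere-evaluation relation in $\gl_N$ foams.

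For surjectivity of $\Phi$, I would take an arbitrary lasagna filling $G$ of $\Whandle$ with boundary $L$ in class $(\alpha, \eta)$ and use a transversality/isotopy argument to put the underlying surface $\Sigma_G$ in a standard position with respect to the $2$-handles $Z$: after a generic isotopy, $\Sigma_G$ meets each core disk of the $i$-th $2$-handle transversely in some number of points, and meets the connecting cylinder $C \cong \nu(K_i) \times [0,1]$ in a collection of parallel annuli and ribbon-band-like pieces. Cutting $\Sigma_G$ along $\del_- Z$ exhibits $G$ as the image under $\Phi$ of a lasagna filling of $\Wgeneric$ with boundary some cable $K(r - \alpha^-, r + \alpha^+) \cup L$; the homological bookkeeping in \eqref{eq:MVi} guarantees the number of algebraic vs.\ geometric intersections with the cores is governed precisely by $\alpha$, so that the relevant cable is of the claimed form. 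Injectivity is the subtler direction: I would construct an inverse (or at least a well-defined left inverse) by the reverse procedure — isotope into standard position and cut — and verify that two standard-position representatives of the same class in $\Sz(\Whandle; L, (\alpha,\eta))$ differ by a sequence of moves (isotopies in $\Whandle$, and the input-ball replacement move) that pull back, under cutting, exactly to the relations \eqref{eq:sim} together with the defining relations of $\Sz(\Wgeneric; -)$. The key lemma here, as in \cite{MN}, is that any two ways of isotoping a surface off the attaching region differ by braiding the strands and by sliding pieces of the surface across the core disks — producing dotted ribbon bands — which is precisely the content of \eqref{eq:sim}.

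The main obstacle I expect is this last injectivity/well-definedness argument: controlling the ambiguity in the "cut open along $\del_- Z$" operation. Unlike the $B^4$ case, here the ambient manifold $\Wgeneric$ is arbitrary, so the isotopies moving $\Sigma_G$ into standard position can interact with the input balls of $G$ and with the topology of $\Wgeneric$ in ways that must be carefully tracked — in particular, one must ensure the homology-class decomposition \eqref{eq:decompose} and the map $\eta \mapsto \eta^r$ are respected throughout, using the exactness of \eqref{eq:MVi}. I would handle this by invoking Lemma~\ref{lem:fixballs} to fix a reference ball and organize all moves relative to it, and by arguing that any two standard-position representatives are related by a sequence of moves supported in a neighborhood of $C \cup Z$, so that the comparison reduces to a purely local statement about cables near $K$ — at which point it becomes essentially the same computation carried out in \cite{MN}. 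A secondary technical point worth isolating as a lemma is the compatibility of the braid actions $\beta_i$ and the ribbon maps $\psi_i^{[d]}$ with the gluing map $\Psi$ from \eqref{eq:glue}, which is what makes the relations \eqref{eq:sim} transport correctly under $\Phi$.
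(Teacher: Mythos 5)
Your proposal is correct and takes essentially the same approach as the paper: define $\Phi$ by attaching core-parallel disks of the \twoh-handles along the cable, define $\Phi^{-1}$ by isotoping a lasagna filling of $\Whandle$ into standard position and cutting, and reduce the well-definedness and bijectivity verification to the argument of Theorem~1.1 in \cite{MN}. One small slip: the map $\Phi$ itself attaches \emph{only} disks, not disks together with ribbon bands --- ribbon bands enter only via the relations $\psi_i^{[d]}$ when checking that $\Phi$ descends through \eqref{eq:sim}.
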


\begin{proof}
An element $v\in \Sz( \Wgeneric; K(r-\alpha^-,r+\alpha^+) \cup L, \eta^r)$ is represented by a
linear combination of lasagna fillings $(\Sigma,\{(B_i, L_i, v_i\})$ in $\Wgeneric$,
where $\del \Sigma = K(r-\alpha^-,r+\alpha^+) \cup L \cup (\cup_i L_i)$. We define $\Phi(v)$ to be
the class of the linear combination of lasagna fillings with the same input data
$\{(B_i, L_i, v_i\}$ as $v$, but with the surfaces given by attaching to each
$\Sigma$ (along its boundary) the disjoint union of $r_i-\alpha_i^-$ negatively
oriented discs parallel to the core of $i^{th}$ \twoh-handle and $r_i+\alpha_i^+$
positively oriented such discs (union over all $i$).

We also define a map $\Phi^{-1}$ in the opposite direction, as follows. Let $F$ be a lasagna filling in $\Whandle$ with surface $\Sigma$. We isotope the input balls of $F$ to be inside $\Wgeneric$, and isotope the surface $\Sigma$ such that its intersection with the \twoh-handles consists of several disks parallel to their cores. Removing these disks produces a lasagna filling of $\Wgeneric$ with boundary on a link of the form $K(r-\alpha^-,r+\alpha^+) \cup L$. We let this be $\Phi^{-1}(F)$.

The proofs that $\Phi$ and $\Phi^{-1}$ are well-defined and inverse to each other are similar to the
proof of Theorem 1.1 in \cite{MN}, which dealt with the case $\Wgeneric=B^4$ and
$L=\emptyset$. The extension to arbitrary $\Wgeneric$ and $L$ is obtained by replacing the Khovanov-Rozansky homologies $\KhR_N$ with the skein lasagna modules in $\Wgeneric$. (In the formulation here, the proof
of the statement is even slightly clearer since it relates lasagna skein modules
with lasagna skein modules. In particular, we do not have to choose standard
lasagna fillings with ``slighly smaller input balls'', as these were only
required when comparing $\Sz(B^4,-)$ with $\KhR_N$.)
\end{proof}

\begin{remark}
    In some cases it is known that the braid group actions on the link homology
    of cabled links factor through the symmetric group. For Khovanov homology of
    links in $\mathbb{R}^3$, this was shown by
    Grigsby--Licata--Wehrli~\cite[Theorem 2]{MR3731256}. For the $\glN$ homology
    of links in $\mathbb{R}^3$ (or $S^3$) a similar argument works in the case
    of \emph{parallelly oriented} strands \cite[Section
    6.1]{2019arXiv190404481G}. We have no reason to doubt that the same could be
    true for anti-parallel strands, i.e. in the situation relevant for $\Sz$,
    but we do not currently know how to prove it. 
\end{remark}

We will primarily be using the results from this subsection in the case where the role of $\Wgeneric$ is played by 
$$ \Wone: = \natural^m(S^1 \times B^3),$$ a manifold obtained from a \zeroh-handle by
attaching some \oneh-handles. We denote $\Whandle$ by $\Wtwo$. Then, $H_2(\Wone; \Z) = 0$, so $H_2^L(\Wone; \Z)=0$ for any null-homologous  $L$, and the decomposition \eqref{eq:decompose} for skein lasagna
modules of links in $\Wone$ is trivial (consists of a single summand). Moreover, in this case an element
$(\alpha, \eta) \in H_2^L(\Wtwo; \Z) \subseteq H_2(\Wtwo, L; \Z)$ is uniquely determined
by its image $\alpha$ in $H_2(\Wtwo, \Wone; \Z)\cong \Z^n$. Indeed, the exact sequence 
$$ 0=H_2(\Wone; \Z) \to H_2(\Wone, L \cup \nu(K); \Z) \to H_1(L \cup \nu(K); \Z)$$
show that the component $\eta$ is determined by its image in 
$$H_1(L \cup \nu(K); \Z)= H_1(L; \Z) \oplus H_1(\nu(K); \Z).$$
The part in $H_1(L; \Z)$ has to be the fundamental class $[L]$, while the part in $H_1(\nu(K); \Z)\cong \Z^n$ is the image of $\alpha$ under the isomorphisms 
$$H_2(\Wtwo, \Wone; \Z)  \xrightarrow{\cong} H_2(Z, \del_- Z; \Z) \xrightarrow{\cong} H_1(\del_-Z; \Z)  \xrightarrow{\cong} H_1(\nu(K); \Z).$$

Therefore, in this case the
class $\eta$ is redundant (being determined by $\alpha$), so we simply drop it
from the notation, writing for example $\alpha$ instead of $(\alpha, \eta)$ for
the classes in $ H_2^L(\Wtwo; \Z)$. With this in mind, the isomorphism from
Theorem~\ref{thm:12h} is written as
\begin{equation}
\label{eq:newphi} \Phi:  \cSz(\Wone;K, L) \xrightarrow{\phantom{a}\cong\phantom{a}}
\Sz(\Wtwo;L, \alpha).
\end{equation}

\subsection{Three-handles}

In \cite[Proposition 2.1]{MN} the following result was shown:
\begin{proposition}
    \label{prop:threefour}
Let $i\colon W \to W'$ be the inclusion of a \fourm-manifold $W$ into $W'$. Then
we have a natural map
\[i_*\colon \Sz(W;\emptyset) \to \Sz(W',\emptyset).\] If $W'$ is the result of a
$k$-handle attachment to $W$, then $i_*$ is a surjection for $k=3$ and an
isomorphism for $k=4$.
\end{proposition}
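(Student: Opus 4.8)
The plan is to realize $i_*$ as one of the gluing maps of Section~\ref{sec:gluing} and then analyze it handle-by-handle by general position, the decisive input being that the cocore of a $k$-handle has dimension $4-k$ — that is, $1$ for a $3$-handle and $0$ for a $4$-handle. I would first construct $i_*$: writing $W'=W\cup_{\del W}Z$ with $Z$ the trace of the handle attachment (a cobordism from $\del W$ to $\del W'$), set $i_*:=\Psi_{Z;\emptyset}$, the map~\eqref{eq:PsiZG} for the empty lasagna filling of $Z$; concretely, $i_*$ sends a filling $F$ of $W$ with empty boundary to the same surface and input balls regarded inside $W'$. Since such an $F$ is supported in $\operatorname{int}(W)\subseteq\operatorname{int}(W')$, this is well defined and natural, and compatible with composition by~\eqref{eq:compose}; this already gives the ``natural map'' part of the statement for an arbitrary inclusion.

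Let $H=D^k\times D^{4-k}$ be the attached $k$-handle and $c=\{0\}\times D^{4-k}\subset H$ its cocore, a properly embedded $(4-k)$-disk with $c\cap W=\emptyset$. A tubular neighborhood computation identifies $W'\setminus\nu(c)$ with $W$ together with an external collar on part of $\del W$, so $W'\setminus\nu(c)$ deformation retracts onto $W$, and this retraction can be realized by an ambient isotopy of $W'$ rel $\del W'$ (a lasagna filling has empty boundary, hence stays away from $\del W'$, leaving room to taper the isotopy to the identity there). Given a lasagna filling $F'$ of $W'$ with empty boundary: general position isotopes its two-dimensional surface off $c$ whenever $2+(4-k)<4$, i.e. for $k\in\{3,4\}$; then, after using relation~(b) to replace each input ball by a product cobordism capped by a tiny input ball with the same label, the tiny input balls (and the new product pieces) are put in general position with respect to $c$. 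Pushing the resulting filling into $W'\setminus\nu(c)$ and then, via the retraction, into $W$ exhibits $[F']$ as $i_*$ of a filling of $W$. This proves surjectivity of $i_*$ for $k=3,4$.

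For $k=4$ the cocore is a point $c$, which $W$ already avoids, so I would factor $i_*$ as $\Sz(W;\emptyset)\xrightarrow{\iota_*}\Sz(W'\setminus\{c\};\emptyset)\xrightarrow{j_*}\Sz(W';\emptyset)$. Here $\iota\colon W\hookrightarrow W'\setminus\{c\}$ is isotopic (in $W'\setminus\{c\}$) to a diffeomorphism, since $W'\setminus\{c\}$ is $W$ with a half-open collar adjoined to its $S^3$-boundary component; hence $\iota_*$ is an isomorphism. The map $j_*$ is surjective by the argument above, and injective because any equivalence of fillings in $W'$ — a finite chain of isotopies, relation-(b) replacements, and multilinear relations — can be pushed off $c$: during an isotopy the surface traces a $3$-dimensional subset of $W'\times[0,1]$, and $3+\dim(\{c\}\times[0,1])=4<5$, so general position keeps it off $\{c\}\times[0,1]$; the input balls are first shrunk via~(b) so that their tiny images can be routed around $c$; and the insertion step of~(b) may be done inside a ball avoiding $c$. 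Hence $j_*$, and therefore $i_*$ for $k=4$, is an isomorphism.

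I expect the $k=4$ injectivity — the claim that an arbitrary equivalence of fillings, a priori living in $W'$, can be pushed off the cocore — to be the main obstacle. For $k=3$ it genuinely fails, since $3+\dim(c\times[0,1])=5=\dim(W'\times[0,1])$ allows a surface to cross the cocore arc during a one-parameter family; this is why one gets only surjectivity for $3$-handles. For $k=4$ the surface is controlled by the dimension count, but the input balls have $5$-dimensional traces in $W'\times[0,1]$ and cannot be made disjoint from $\{c\}\times[0,1]$ by perturbation alone, so one must first shrink them via~(b) and then check compatibility with the ambient isotopies and with~(b)'s own insertion step; a cleaner organization is to define a retraction $r\colon\Sz(W';\emptyset)\to\Sz(W;\emptyset)$ directly by pushing fillings off $c$ and to verify $r$ well defined with $r\circ i_*=\operatorname{id}$.
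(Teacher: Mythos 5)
The paper does not actually prove this proposition in-text: it is quoted verbatim from \cite[Proposition 2.1]{MN}. That said, your argument is a correct proof, and it uses exactly the cocore-transversality technique the present paper deploys in its proof of Theorem~\ref{thm:3h} (there generalized to nonempty boundary links): push a lasagna filling off the $(4-k)$-dimensional cocore, and observe that for $k=3$ the one-parameter traces of isotopies ($3$-dimensional in $W'\times[0,1]$, against the $2$-dimensional cocore-times-interval) generically intersect, so only surjectivity can be extracted, whereas for $k=4$ everything can be routed around the point cocore. Your factorization through $W'\setminus\{c\}$ for the $k=4$ case is a clean way to organize the isomorphism. One small imprecision worth fixing: you say the shrunk input balls are ``put in general position with respect to $c$'', but a $4$-ball in a $4$-manifold cannot be made disjoint from a $1$-dimensional cocore by transversality alone ($4+1>4$); the correct move, which you effectively describe, is to apply general position to the \emph{center point} of the shrunk ball ($0+1<4$) and then take the ball small enough to lie in a neighborhood disjoint from $c$. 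A tidier way to control the $(b)$-insertion step and isotopies simultaneously --- the one the paper uses in proving Theorem~\ref{thm:3h} --- is to invoke Lemma~\ref{lem:fixballs}: fix reference input balls $R_i$ away from the cocore once and for all, so that all $(b)$-replacements happen inside them, and then only the isotopies need to be pushed off the cocore.
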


\begin{corollary}
    \label{cor:Sfour}
    We have $\Sz(S^4)\cong \Z$, concentrated in bidegree zero. 
\end{corollary}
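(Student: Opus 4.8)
The plan is to build $S^4$ from the standard handle decomposition with one $0$-handle, no $1$- or $2$-handles, one $3$-handle, and one $4$-handle, and then read off $\Sz(S^4)$ by following this decomposition through the results already established. Starting from the $0$-handle $B^4$, we know $\Sz(B^4; \emptyset) = \KhR_N(\emptyset) \cong \Z$, concentrated in bidegree zero, since $\KhR_N$ of the empty link is the ground ring. After attaching the $3$-handle we obtain $S^1 \times B^3$, and after attaching the $4$-handle we obtain $S^4$; alternatively, one can view $S^4 = B^4 \cup B^4$ as two $0$-handles glued along their boundary $S^3$, but the handle-by-handle route is what Proposition~\ref{prop:threefour} is set up to handle.

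First I would apply the $k=4$ half of Proposition~\ref{prop:threefour}: attaching the $4$-handle to $S^1 \times B^3$ yields $S^4$, and the proposition says the induced map $i_*\colon \Sz(S^1 \times B^3; \emptyset) \to \Sz(S^4; \emptyset)$ is an isomorphism. So it suffices to compute $\Sz(S^1 \times B^3; \emptyset)$. For this I would apply the $k=3$ half of the proposition to the attachment of the $3$-handle to $B^4$, which produces $S^1 \times B^3$: it tells us $i_*\colon \Sz(B^4; \emptyset) \to \Sz(S^1 \times B^3; \emptyset)$ is surjective. Hence $\Sz(S^1 \times B^3; \emptyset)$ is a quotient of $\Z$, so it is either $\Z$ or a cyclic group $\Z/k$ (in bidegree zero, with everything else zero). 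To pin it down as exactly $\Z$, I would exhibit a lasagna filling of $S^4$ (equivalently of $S^1 \times B^3$) that is not a torsion element — for instance, the empty lasagna filling, whose image under the evaluation/gluing machinery is the generator; more robustly, I would use the gluing map from Section~\ref{sec:gluing} applied to $S^4 = B^4 \cup_{S^3} B^4$ to produce a surjection $\Sz(B^4;\emptyset)\otimes \Sz(B^4;\emptyset)\to \Sz(S^4;\emptyset)$ together with a ``cap off'' map in the other direction coming from a $4$-ball neighborhood of a point, showing the composite $\Z \to \Sz(S^4;\emptyset) \to \Z$ is the identity, so that $\Sz(S^4;\emptyset) \cong \Z$.

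The main obstacle I anticipate is ruling out torsion: surjectivity from $\Z$ only gives $\Sz(S^4;\emptyset)$ cyclic, and one genuinely needs a left inverse or a nontrivial invariant to see that the generator has infinite order. The cleanest way to produce such a left inverse is to observe that $S^4$ contains an embedded $B^4$ (a neighborhood of a point disjoint from everything), giving a restriction-type map, or to use the gluing formula $\Psi$ of equation~\eqref{eq:glue} together with the fact that $\Sz(B^4; \emptyset) \cong \Z$ is detected by the canonical evaluation $\KhR_N(F)$; composing the inclusion-induced map $\Z = \Sz(B^4;\emptyset)\to \Sz(S^4;\emptyset)$ with a map back down induced by viewing $S^4$ as $B^4$ with a $1$-handle, a $3$-handle (cancelling), should recover the identity on $\Z$. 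A secondary (minor) point to check is that nothing appears in nonzero bidegree: since every group in sight is a quotient of $\Z$ sitting in bidegree $(0,0)$, and the maps $i_*$ are bidegree-preserving, this is automatic once the $p=0$ case of the grading bookkeeping is noted. Assembling these, we conclude $\Sz(S^4) \cong \Z$, concentrated in bidegree zero.
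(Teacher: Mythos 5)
Your proof plan rests on a handle decomposition of $S^4$ that does not exist, and on a false topological claim along the way. Attaching a $3$-handle to $B^4$ does not yield $S^1\times B^3$ --- that is what attaching a $1$-handle does. Attaching a $3$-handle to $B^4$ along the standard $S^2\subset S^3$ produces $S^3\times[0,1]$, which has two boundary spheres and would require two $4$-handles, not one. More decisively, there is no closed $4$-manifold at all with exactly one $0$-handle, one $3$-handle, and one $4$-handle and nothing else: turning such a decomposition upside down gives one $0$-handle, one $1$-handle and one $4$-handle, and you cannot attach a $4$-handle (glued along $S^3$) to $\partial(S^1\times B^3)=S^1\times S^2$. (Equivalently, the cellular chain complex would force $H_3\cong\Z$, which is impossible for a closed oriented $4$-manifold with $H_1=0$.) Because of this error, both applications of Proposition~\ref{prop:threefour} in your plan --- the $k=3$ step and the $k=4$ step --- are applied to handle attachments that do not occur, so the argument does not go through as written, and all the subsequent effort to ``rule out torsion'' is addressing a problem you only created by choosing a phantom $3$-handle.

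The paper's own (tacit) proof is far shorter and avoids $3$-handles entirely: $S^4$ is a $0$-handle $B^4$ with a single $4$-handle attached along $S^3=\partial B^4$, so the $k=4$ case of Proposition~\ref{prop:threefour} gives an isomorphism $i_*\colon\Sz(B^4;\emptyset)\to\Sz(S^4;\emptyset)$, and $\Sz(B^4;\emptyset)=\KhRN(\emptyset)\cong\Z$, concentrated in bidegree $(0,0)$. Since only the $k=4$ (isomorphism) case is invoked, the torsion concern never arises. You do briefly mention the decomposition $S^4=B^4\cup_{S^3}B^4$ as an alternative, and that is exactly the right route; if you take it, the corollary is a one-line application of the $k=4$ statement, with no need to construct a left inverse by hand.
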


In this section we focus on the case of \threeh-handle attachments. We will
generalize the statement of Proposition~\ref{prop:threefour} to \threeh-handle
attachments in the presence of boundary links and explicitly describe the kernel
of the resulting maps on $\Sz$.

Consider the following setting. Let $W$ be a \fourm-manifold with a framed link $L
\subset Y=\del W$ and an embedded $2$-dimensional sphere $S \subset Y$, disjoint
from $L$. Let $Z$ be the cobordism given by attaching a \threeh-handle to $W$ along
$S$, and let 
$$ W' = W \cup Z.$$ Let $Y'=\del W'$ be the outgoing boundary of $Z$, so that
$\del Z = (-Y) \cup Y'$. Inside $Z$ we have the two-dimensional annular
cobordism $A=I \times L$, from $L=\{0\} \times L$ to a new link $L' = \{1\}
\times L$. Given $\alpha' \in H_2^L(W'; \Z) \cong H_2^L(W; \Z) / ([S])$, let us consider the set of all $\alpha \in H_2^L(W; \Z)$ whose equivalence class modulo $[S]$ is $\alpha'$:
$$\ap := \{ \alpha  \in H_2^L(W; \Z) \mid \alpha \text{ mod } [S] = \alpha' \}.$$
 
We obtain a cobordism map as in
\eqref{eq:PsiZGa}:
$$ \Psi_{Z;A, \alpha} : \Sz(W; L, \alpha) \to \Sz(W'; L', \alpha').$$
Let
$$ \Psi_{Z;A, \alpha'} :=  \sum_{\alpha \in \ap} \Psi_{Z;A, \alpha}: \bigoplus_{\alpha \in \ap} \Sz(W; L, \alpha) \to \Sz(W'; L', \alpha').$$ 

\begin{remark}
When $L =\emptyset$ (and therefore $A=\emptyset$), then $ \Psi_{Z;\emptyset}$ is
exactly the map $i_*$ from Proposition~\ref{prop:threefour}.
\end{remark}

Let $J$ be the equator of $S$ (which is an unknot in $Y$). Equip $J$ with an
arbitrary orientation. By pushing a hemisphere of $S$ slightly from $Y =  \{0\}
\times  Y$ into the cylinder $I \times Y$, and taking its union with $I \times L
$, we obtain a  properly embedded cobordism in $I \times Y$, going from $L\cup
J$ to $L$. There are two such hemispheres, which produce two cobordisms, denoted
$\Delta_+$ and $\Delta_- \subset I \times Y$. We orient $\Delta_+$ and
$\Delta_-$ so that their boundary orientation is the one on $J$. (Note that they
are therefore ``oppositely oriented,'' in the sense that they do not match up to
produce an orientation on $S$.) Let us identify $W \cup (I \times Y)$ with $W$
itself using a standard collar neighborhood. Then, the cobordism maps associated
to $\Delta_+$ and $\Delta_-$ take the form
$$ \Psi_{I \times Y ; \Delta_+, \alpha}\colon \Sz(W; L\cup J, \alpha + [\Delta_+])\to \Sz(W; L, \alpha),$$
$$ \Psi_{I \times Y; \Delta_-, \alpha}\colon \Sz(W; L\cup J, \alpha + [\Delta_-])\to \Sz(W; L, \alpha).$$  
From here we get direct sum maps
$$\Psi_{I \times Y ; \Delta_+, \alpha'} := \bigoplus_{\alpha \in \ap} \Psi_{I \times Y ; \Delta_{+}, \alpha}$$
and
$$\Psi_{I \times Y ; \Delta_-, \alpha'} := \bigoplus_{\alpha \in \ap} \Psi_{I \times Y ; \Delta_{-}, \alpha}.$$
Observe that these two maps have the same domain
$$\bigoplus_{\alpha \in \ap}\Sz(W; L\cup J, \alpha + [\Delta_+]) = \bigoplus_{\alpha \in \ap} \Sz(W; L\cup J, \alpha + [\Delta_-])$$
and the same range $\bigoplus_{\alpha \in \ap}  \Sz(W; L, \alpha).$ Let 
$$f: = \Psi_{I \times Y ; \Delta_+, \alpha'}- \Psi_{I \times Y; \Delta_-,
\alpha'}.$$ 

\begin{theorem}
\label{thm:3h}
The map $\Psi_{Z; A, \alpha'}$ associated to a \threeh-handle addition from $W$ to $W'$
is surjective, and its kernel is exactly the image of $f$. Therefore, $\Sz(W',
L', \alpha')$ is isomorphic to $$\Bigl( \bigoplus_{\alpha \in \ap} \Sz(W, L, \alpha)\Bigr)/ \im(f),$$ that is, to the
coequalizer of the maps $\Psi_{I \times Y ; \Delta_+, \alpha'}$ and $\Psi_{I
\times Y; \Delta_-, \alpha'}$.
\end{theorem}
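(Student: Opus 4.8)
\textbf{Proof plan for Theorem~\ref{thm:3h}.}

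The plan is to build an explicit two-sided inverse to $\Psi_{Z;A,\alpha'}$ on the quotient $\bigl(\bigoplus_{\alpha\in\ap}\Sz(W,L,\alpha)\bigr)/\im(f)$, rather than working abstractly with coequalizers. First I would prove surjectivity. Given a lasagna filling $F'$ of $W'$ with boundary $L'$, a transversality argument (isotope the input balls off $Z$ and put $\Sigma'$ in general position with respect to the cocore of the \threeh-handle) lets me assume $\Sigma'$ meets $Z$ in a collection of annuli and disks: the annuli run from $L'$ back to $L$, while any closed components of $\Sigma'\cap Z$ and any disks can be slid off or capped. Cutting $W'$ back to $W$ along $Z$ then exhibits $F'$ (up to the skein relation, using Lemma~\ref{lem:fixballs}) as $\Psi_{Z;A,\alpha}$ applied to a lasagna filling in $W$, for some $\alpha\in\ap$. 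This is essentially the same kind of argument already used in the proof of Theorem~\ref{thm:12h} and, for the link-free case, in Proposition~\ref{prop:threefour}; the novelty is only bookkeeping the annular part $A=I\times L$ and the homology classes.

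Next I would show $\im(f)\subseteq\ker(\Psi_{Z;A,\alpha'})$. The geometric content is that in $W'$ the sphere $S$ bounds the cocore \threeh-handle core (a $3$-ball), so both hemispheres $\Delta_+$ and $\Delta_-$, after being pushed into $I\times Y$ and then included into $W'$, become isotopic rel boundary to the \emph{same} disk capping off $J$ inside $Z$. Hence $\Psi_{Z;A,\alpha'}\circ\Psi_{I\times Y;\Delta_+,\alpha'}=\Psi_{Z;A,\alpha'}\circ\Psi_{I\times Y;\Delta_-,\alpha'}$ by functoriality of cobordism maps \eqref{eq:compose}, so $\Psi_{Z;A,\alpha'}\circ f=0$. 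Therefore $\Psi_{Z;A,\alpha'}$ descends to a surjection $\overline{\Psi}$ from the quotient.

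The main work is injectivity of $\overline{\Psi}$, i.e.\ $\ker(\Psi_{Z;A,\alpha'})\subseteq\im(f)$. Suppose $v=\sum_\alpha v_\alpha$ maps to zero in $\Sz(W';L',\alpha')$. Unwinding the equivalence relation on the target, the lasagna filling $\Psi_{Z;A}(v)$ is connected to the empty combination by a finite chain of the moves in Lemma~\ref{lem:fixballs}: isotopies (rel $\del W'$, allowed to drag input balls, in particular across $Z$) and input-ball replacements. I would pull each such move back to $W$. Isotopies that stay within $W$ pull back to the corresponding relation in $\Sz(W;L,\alpha)$; the interesting case is an isotopy that drags a piece of the surface, or an input ball, across the \threeh-handle $Z$ and back --- this is exactly where a strand of $\Sigma$ meeting $J$ gets ``pushed over the top'' of $S$, changing which hemisphere ($\Delta_+$ versus $\Delta_-$) is used to cap it, hence changing the representative in $\bigoplus_\alpha\Sz(W;L,\alpha)$ precisely by an element of $\im(f)$ (and simultaneously shifting $\alpha$ by $[S]$, consistent with the direct sum over $\ap$). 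An input ball passing through $Z$ can first be shrunk and isotoped back into $W$ by general position (its boundary link is disjoint from the \threeh-handle's belt sphere, which is a point, for generic position), so it contributes nothing new. Carefully checking that the complete list of moves generates no relations beyond those of $\Sz(W;L,\alpha)$ together with $\im(f)$ is the crux; I expect the hard part to be organizing the isotopy-across-$Z$ analysis into finitely many model moves and verifying each lands in $\im(f)$, and keeping track of the homology-class shifts by $[S]$ throughout. This last step is the analogue of the kernel computation implicit in Proposition~\ref{prop:threefour}, now made explicit and extended to the case $L\neq\emptyset$.
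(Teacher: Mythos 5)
Your proposal takes essentially the same route as the paper: descent follows because $\Delta_+$ and $\Delta_-$ are isotopic rel boundary inside $Z$; the inverse/surjectivity comes from pushing fillings off the cocore of $Z$; and the kernel computation reduces to analyzing how a $1$-parameter family of surfaces crosses the cocore, each crossing swapping a hemisphere of $S$ and hence contributing an element of $\im(f)$. The step you flag as the crux is dispatched in the paper by a short transversality observation: the cocore of the $3$-handle is a $1$-dimensional interval, so a generic isotopy of lasagna fillings (a $3$-dimensional track in a $4$-manifold) meets it in finitely many isolated points at distinct times, and each such point is exactly a hemisphere swap $\Delta_+\leftrightarrow\Delta_-$ after a small adjustment near $S$; the paper also constructs $\Phi^{-1}$ directly by this push-off rather than first proving surjectivity via a case analysis of $\Sigma'\cap Z$, which is a minor simplification.

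One small imprecision: you write that the belt sphere of the $3$-handle is ``a point''; it is $S^0$ (two points), and the relevant object is the $1$-dimensional cocore itself, which the $4$-dimensional input balls and the $3$-dimensional isotopy track can be put in general position with respect to.
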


\begin{proof}
We first show that $\Psi_{Z; A, \alpha'}$ vanishes on the image of $f$, that is,
$$ \Psi_{Z; A, \alpha'} \circ \Psi_{I \times Y ; \Delta_+, \alpha'} =  \Psi_{Z; A,
\alpha'} \circ \Psi_{I \times Y ; \Delta_-, \alpha'}.$$ Indeed, from the
composition law \eqref{eq:compose} we see that the left hand side is associated
to the surface cobordism $\Delta_+ \cup A$ and the right hand side to $\Delta_-
\cup A$. However, inside the \threeh-handle $Z$, the sphere $S$ gets filled with a
core $B^3$, and therefore $\Delta_+$ and $\Delta_-$ are isotopic rel boundary.
It follows that the two cobordism maps are the same.

Therefore, $\Psi_{Z; A, \alpha}$ factors through a map
$$ \Phi\colon \Bigl( \bigoplus_{\alpha \in \ap} \Sz(W, L, \alpha)\Bigr)/ \im(f) \to \Sz(W', L', \alpha').$$ We need to prove
that $\Phi$ is bijective. For this, we construct its inverse $\Phi^{-1}$. Given
a lasagna filling $F'$ of $W'$ with boundary $L'$, observe that the cocore of
the \threeh-handle $Z$ is one-dimensional, and therefore we can isotope $F'$ to be
disjoint from this cocore; after this, we can push it into $W$, to obtain a
lasagna filling there, called $F$, with boundary $L$. We set 
$$\Phi^{-1}[F']=[F].$$ To see that $\Phi^{-1}$ is well-defined, we need to check
that if two lasagna fillings $F'_0$ and $F'_1$ are equivalent in $W$, then the
corresponding fillings $F_0$ and $F_1$ differ (up to equivalences in $W$) by an
element of $\im(f)$. We use Lemma~\ref{lem:fixballs}, in which we fix balls $R_i
\subset W$ away from the \threeh-handle, and consider the equivalences listed in
the lemma (with the ball replacements happening in $R_i$). Then, the
equivalences in $W'$ give rise to equivalences in $W$, with one exception: an
isotopy of the surfaces may intersect the one-dimensional cocore of $Z$ (which
is an interval). Generically, this happens in a finite set of points, each point
at a different time during the isotopy. Every time the isotopy meets the cocore,
the corresponding surfaces in $W$ differ by replacing a hemisphere of $S$ (with
boundary some closed curve $\gamma$) with its complement in $S$. Up to an
isotopy supported near $S$, we can assume that $\gamma$ is the equator $J$ with
its chosen orientation. (For example, if $\gamma$ is $J$ with the opposite
orientation, we can rotate it by $\pi$ about a transverse axis to get $J$ with
the original orientation.) Then, the hemispheres being interchanged are
$\Delta_+$ and $\Delta_-$ and hence the classes of $F_0$ and $F_1$ differ by an
element in the image of $f$.

This shows that $\Phi^{-1}$ is well-defined, and its definition makes it clear
that it is an inverse to $\Phi$. It follows that $\Phi$ is bijective, and the
conclusions follow.
\end{proof}

\begin{example}
\label{ex:cancellation}
Let $W = S^2 \times D^2$ and $S$ the sphere $S^2 \times \{p\}$, where $p\in \del
D^2$. Then attaching the \threeh-handle gives $W' = B^4$. Let us see what
Theorem~\ref{thm:3h} gives in this case. For simplicity, we ignore the
decomposition into relative homology classes. 

The skein lasagna module of $W$ has the structure of a commutative algebra over
$\Z$, with the multiplication given by putting lasagna fillings side-by-side, in
the decomposition
$$ (S^2 \times D^2) \cup_{S^2 \times I} (S^2 \times D^2) \cong S^2 \times D^2,$$
where $I \subset \del D^2$ is an interval. As a $\Z$-algebra, $\Sz(W;
\emptyset)$ was computed in \cite[Theorem 1.2]{MN} to be
$$ \Sz(W; \emptyset) \cong \Z[A_1, \dots, A_{N-1}, A_0, A_0^{-1}]$$ where $A_i$
comes from the lasagna filling corresponding to the closed surface $S^2 \times
\{0\}$, equipped with the standard orientation, and marked with $N-1-i$ dots.
(As mentioned in Section~\ref{sec:gluing}, this is equivalent to introducing one
input ball intersecting $S^2 \times \{0\}$ in an unknot labeled $X^{N-1-i}$.) 

The cobordism maps
$$ \Psi_{I \times Y; \Delta_+}, \  \Psi_{I \times Y; \Delta_-}: \Sz(W; J) \to
\Sz(W; \emptyset)$$ are as follows. The unknot $J$ is contained in a ball in the
boundary of $W$ (say, a neighborhood of the disk $\Delta_+$). Then, according to
\cite[Corollary 1.5]{MN}, we have
$$ \Sz(W; J) \cong \Sz(W) \otimes_{\Z} \KhRN(J) \cong  \Sz(W) \otimes_{\Z}
\bigl( \Z[X]/(X^N)\bigr).$$ (Strictly speaking, Corollary 1.5 in \cite{MN} is
phrased for coefficients in a field $\k$, due to the fact that its proof
requires choosing a basis of $\KhRN(J)$. In our case, $J$ is the unknot, so
$\KhRN(J)$ is free over $\Z$, and therefore the same argument applies with
coefficients in $\Z$.)

Both maps $\Psi_{I \times Y; \Delta_+}$ and $\Psi_{I \times Y; \Delta_-}$
correspond to capping the unknot by disks. The first map acts only
on the factor $\KhRN(J)$ and is given by
$$\Psi_{I \times Y; \Delta_+}(v \otimes X^{N-1-i}) = \begin{cases} v & \text{if
} i=0,\\
0 & \text{if } i =1, \dots, N-1. \end{cases}$$ A useful picture to have in mind
is that we can represent $X^{N-1-i}$ by a dotted disk (with the number of dots
specified by the exponent of $X$), which is completed by $\Delta_+$ to a dotted
sphere that bounds a ball in $W$, and hence can be evaluated to a scalar as
shown above. To compute the action of $\Psi_{I \times Y; \Delta_-}$, on the
other hand, note that the disk $\Delta_-$ completes the dotted disk to a
homologically essential dotted sphere, corresponding to a generator in $\Sz(W; \emptyset)$:
$$\Psi_{I \times Y; \Delta_-}(v \otimes X^{N-1-i}) = v\cdot A_i.$$ Therefore,
taking the coequalizer of the two maps as in Theorem~\ref{thm:3h} boils down to
setting 
$$ A_0=1, \ \ A_1=\dots = A_{N-1} =0$$ in $\Sz(W; \emptyset)$. We deduce that
$$\Sz(W'; \emptyset) \cong \Z[A_1, \dots, A_{N-1}, A_0, A_0^{-1}]/(A_1, \dots,
A_{n-1}, A_0-1) \cong \Z,$$ which is the known answer for the skein lasagna
module of $B^4$; see \cite[Example 4.6]{MWW}.
\end{example}

\begin{remark}
Example~\ref{ex:cancellation} gives an alternate formula for \threeh-handle
attachments. Let us go back to the general setting in this section, with a
\threeh-handle attached to an arbitrary \fourm-manifold $W$ along a sphere $S$ to
produce $W'$, and a framed link $L \subseteq \del W$ away from $S$. Observe that
$\Sz(W, L)$ is naturally a module over the algebra $\Sz(S^2 \times D^2;
\emptyset)$, with the module action being given by attaching fillings in a
neighborhood of the sphere $S$. It follows from the definitions that
$$ \Sz(W'; L') \cong \Sz(W; L) \otimes_{\Sz(S^2 \times D^2; \emptyset)}
\Sz(B^3 \times I; \emptyset).$$ Here, the algebra $\Sz(S^2 \times D^2;
\emptyset)$ is the free polynomial ring in $A_1, \dots, A_{N-1}, A_0, A_0^{-1}$
and $ \Sz(B^3 \times I; \emptyset)=\Sz(B^4)$ is $\Z$ as a module over that
algebra, where $A_0$ acts by $1$ and the other $A_i$ by $0$. We conclude that
$$ \Sz(W'; L') \cong \Sz(W; L)/(A_0-1, A_1, \dots, A_N).$$ 
\end{remark}

\subsection{Handle decompositions}
Let us now specialize the addition of \threeh-handles to the case where the initial
manifold $\Wgeneric=\Wtwo$ is a union of $0$-, $1$- and \twoh-handles. We will then have available to
us the description of $\Sz(\Wtwo; L, \alpha)$ from Section~\ref{sec:12}.

 If we attach a \threeh-handle to $\Wtwo$, in terms of Kirby calculus, the attaching
 sphere $S$ can be represented as a surface $\Sigma$ (of genus $0$, and disjoint
 from $L$) with boundary some copies of the $K_i$'s (the attaching circles for
 \twoh-handles). Then $S$ is the union of $\Sigma$ and (parallel copies of) cores
 of the \twoh-handles.
 
 We draw $J \subset S$ as a small unknot away from all $K_i$, and let $\Delta_+$
 be the small disk it bounds. The other hemisphere $\Delta_-$ is the complement
 of $\Delta_+$ in $S$, and goes over some of the handles. We let 
 $$\Sigma_- = \Sigma \setminus \Delta_+ \subseteq \Delta_-.$$ This is a surface
 on $\del \Wone$ whose boundary is the union of $J$ and several copies of the
 $K_i$'s. Let $s^-_i$ be the number of copies of $K_i$ in $\del \Sigma_-$ that
 appear with the negative orientation, and $s^+_i$ the number of those with the
 positive orientation. We form the vectors
  $$s^-=(s^-_1, \dots, s^-_n), \ \ \ s^+=(s^+_1, \dots, s^+_n).$$

We proceed to describe the maps $\Psi_{I \times Y ; \Delta_+, \alpha'}$ and
$\Psi_{I \times Y; \Delta_-, \alpha'}$ in this case. By Theorem~\ref{thm:12h}
with notation as in \eqref{eq:newphi}, the range $\bigoplus_{\alpha \in \ap}  \Sz(\Wtwo; Z, \alpha)$ of these
maps is identified with the direct sum of cabled skein lasagna modules $\bigoplus_{\alpha \in \ap}  \cSz(\Wone;K, L)$.
Similarly, their domain is identified with
\begin{align*}
 \bigoplus_{\alpha \in \ap}  \cSz(\Wone; K, L\cup J) & \cong \bigoplus_{\alpha \in \ap}  \cSz(\Wone; K, L) \otimes \KhRN(J)\\
 & \cong \bigoplus_{\alpha \in \ap}   \cSz(\Wone; K, L) \otimes \Z[X]/(X^N).
\end{align*}
 We used here the fact that $J$ is split disjoint from all
the attaching links for the \twoh-handles, and therefore each summand that appears
in the definition of $\cSz(\Wone; K, L\cup J)$ splits off a $\KhRN(J)$ factor;
moreover, the equivalence relation is compatible with this splitting.

The map $\Psi_{I \times Y ; \Delta_+, \alpha'}$ is now easy to describe. It is
induced by capping $J$ with a disk, so it only affects the factor $\KhRN(J)$, in
a standard way. Precisely, we have
\begin{equation}
\label{eq:psiplus}
 \Psi_{I \times Y ; \Delta_+, \alpha'}(v \otimes X^n) = \begin{cases}
v & \text{if } n=N-1,\\
0 & \text{if } n =0, 1, \dots, N-2,
\end{cases}
\end{equation}
for all $v \in \cSz(\Wone; K, L)$.

To describe the second map $\Psi_{I \times Y ; \Delta_-, \alpha'}$, consider the
diagram
\begin{equation}
\label{eq:bigdiagram}
\begin{tikzcd}[column sep=huge] 
\underset{\alpha \in \ap}{\bigoplus}  \Sz(\Wone; K(k^-, k^+) \cup L \cup J, \alpha) \arrow[r, "\Psi_{I \times \del \Wone; \Sigma_-, \alpha'}"]\arrow[d, start anchor={[yshift=10pt]south}] &\underset{\alpha \in \ap}{\bigoplus}  \Sz(\Wone; K(k^-+s^-, k^++s^+) \cup L, \alpha) \arrow[d, start anchor={[yshift=10pt]south}]\\
\underset{\alpha \in \ap}{\bigoplus}  \cSz(\Wone; K, L \cup J) \arrow[r, dashed, "\bPsi_{I \times \del \Wone; \Sigma_-, \alpha'}"] \arrow[d, start anchor={[yshift=10pt]south}, "\Phi"', "\cong"] & \underset{\alpha \in \ap}{\bigoplus} \cSz(\Wone; K, L)  \arrow[d,start anchor={[yshift=10pt]south}, "\Phi"', "\cong"] \\
\underset{\alpha \in \ap}{\bigoplus} \Sz(\Wtwo; L \cup J, \alpha) \arrow[r, "\Psi_{I \times Y ; \Delta_-, \alpha'}"] & \underset{\alpha \in \ap}{\bigoplus}  \Sz(\Wtwo; L, \alpha).
\end{tikzcd}
\end{equation}
Here, in the top row we wrote $(k^-, k^+)$ for a pair $(r-\alpha^-, r-\alpha^+)$
as in Definition~\ref{def:cabled}. The vertical maps from the first to the
second row are induced by the inclusion of the summands into the cabled skein
lasagna module; cf. Definition~\ref{def:cabled}. The vertical maps from the
second to the third row are the isomorphisms $\Phi$ from Theorem~\ref{thm:12h}. 

Ignoring the middle dashed arrow for the moment, note that the above diagram
commutes. Indeed, by the definition of $\Phi$ in the proof of
Theorem~\ref{thm:12h}, the vertical compositions (from the first to the third
row) are given by attaching cores of the \twoh-handles to lasagna fillings in $\Wone$.
Note that we are attaching more cores on the right; namely, those in the
boundary of $\del \Sigma$, counted by the vectors $s^-$ and $s^+$. The
horizontal cobordism maps (as defined in Section~\ref{sec:gluing}) are given by
attaching the surface $\Sigma_-$ (in the top row) and $\Delta_-$ (in the bottom
row). Because $\Delta_-$ is the union of $\Sigma_-$ and the extra cores of
\twoh-handles counted by $s^-$ and $s^+$,  the diagram \eqref{eq:bigdiagram}
commutes.

Since the bottom vertical arrows in the diagram are isomorphisms, let us now add
the middle dashed arrow, given by the map
$$ \bPsi_{I \times \del \Wone; \Sigma_-, \alpha'}:= \Phi^{-1}\circ \Psi_{I \times Y ;
\Delta_-, \alpha'} \circ \Phi.$$ Because \eqref{eq:bigdiagram}  commutes, we
deduce that this map is induced on the skein lasagna modules by applying the
cobordism maps $\Psi_{I \times \del \Wone; \Sigma_-, \alpha'}$ on each summand; this
justifies the notation.

Recall that $\Sigma_-$ is the complement of the disk $\Delta_+$ inside $\Sigma$.
Thus, we can write the cobordism maps $\Psi_{I \times \del \Wone; \Sigma_-, \alpha'}$
in terms of the maps $\Psi_{I \times \del \Wone; \Sigma(n\bullet), \alpha'}$
associated to the surface $\Sigma$ with $n$ dots, as in \eqref{eq:dots}:
$$\Psi_{I \times \del \Wone; \Sigma_-, \alpha'}(v \otimes X^n) = \Psi_{I \times \del
\Wone; \Sigma(n\bullet), \alpha'}(v).$$

Fixing $n$, the maps $\Psi_{I \times \del \Wone; \Sigma(n\bullet), \alpha'}$ on
various summands in the construction of the skein lasagna module induce a map:
$$ \bPsi_{I \times \del \Wone; \Sigma(n\bullet), \alpha'}\colon \bigoplus_{\alpha \in \ap}  \cSz(\Wone; K, L) \to \bigoplus_{\alpha \in \ap}   \cSz(\Wone;K, L)$$ such that
\begin{equation}
\label{eq:psiminus}
 \bPsi_{I \times \del \Wone; \Sigma_-, \alpha'}(v \otimes X^n) = \bPsi_{I \times \del \Wone; \Sigma(n\bullet), \alpha'}(v).
 \end{equation}

We are now ready to give a general formula for the skein lasagna module of a
\fourm-manifold decomposed into handles in terms of skein lasagna modules of
\oneh-handlebodies. We will phrase it for an arbitrary number of handles.

\begin{theorem}
\label{thm:main}
Consider \fourm-manifolds
$ \Wone \subseteq \Wtwo \subseteq \Wthree \subseteq \Wfour$ where \begin{itemize}
\item $\Wone=\natural^m(S^1 \times B^3)$ is the union of $m$ \oneh-handles;
\item  $\Wtwo$ is obtained from $\Wone$ by attaching $n$ two-handles along a framed link
$K$;
\item $\Wthree$ is obtained from $\Wtwo$ by attaching $p$ three-handles along spheres
$S_1, \dots S_p$;
\item $\Wfour$ is obtained from $\Wthree$ by attaching some four-handles.
\end{itemize}

Consider also a framed link $L \subset \del \Wfour$. We represent $\Wfour$ by a Kirby
diagram, viewing $K \cup L$ as a link in $\del \Wone$, and the spheres $S_i$ in
terms of surfaces $\Sigma_j$ on $\del \Wone$ with $\del \Sigma_j$ consisting of some
copies of various components of $K$ (so that $S_j$ is the union of $\Sigma_j$
and the corresponding cores of the \twoh-handles). 

Given 
$$\alpha ' \in H_2^L(\Wfour; \Z) \cong H_2^L(\Wthree; \Z) \cong H_2^L(\Wtwo; \Z)/([S_1],
\dots, [S_p]),$$ let $\ap$ be the set of all $\alpha \in H_2^L(\Wtwo; \Z) \subseteq
\Z^n$ whose equivalence class modulo $([S_1], \dots, [S_p])$ is $\alpha'$.  

Then, the skein lasagna module $ \Sz(\Wfour; L, \alpha')$ is isomorphic to the
quotient of the direct sum of cabled skein lasagna modules $\bigoplus_{\alpha \in \ap}  \cSz(\Wone; K, L)$ by the relations
\begin{equation}
\label{eq:P1}
\bPsi_{I \times \del \Wone; \Sigma_j(n\bullet), \alpha'}(v)=0, \ \  n =0, 1, \dots, N-2,
\end{equation}
and
\begin{equation}
\label{eq:P2}
 \bPsi_{I \times \del \Wone; \Sigma_j((N-1)\bullet), \alpha'}(v)= v
\end{equation}
for all $v \in \bigoplus_{\alpha \in \ap}   \cSz(\Wone; K, L)$ and $j=1, \dots, p$.
\end{theorem}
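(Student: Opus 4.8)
The plan is to obtain Theorem~\ref{thm:main} as an iterated application of the three-handle formula from Theorem~\ref{thm:3h}, combined with the two-handle identification from Theorem~\ref{thm:12h} and the explicit description of the maps $\Psi_{I \times Y; \Delta_\pm, \alpha'}$ worked out in the diagram~\eqref{eq:bigdiagram}. The starting point is $\Wtwo$: by Theorem~\ref{thm:12h} (in the form~\eqref{eq:newphi}) we identify $\Sz(\Wtwo; L \cup J_1 \cup \dots, \alpha)$ with the cabled skein lasagna module $\cSz(\Wone; K, L \cup J_1 \cup \dots)$, and — since each equator $J_j$ is a small unknot split from $K$ — this factors as $\cSz(\Wone; K, L) \otimes \bigotimes_j \KhRN(J_j)$, as already noted in the passage preceding~\eqref{eq:bigdiagram}. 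The three-handle maps for $S_1, \dots, S_p$ only involve the surfaces $\Sigma_j$ and the equators $J_j$, all of which can be chosen mutually disjoint, so the relevant cobordism maps commute with one another and it suffices to apply Theorem~\ref{thm:3h} once per three-handle.

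First I would treat the attachment of a single three-handle $S_1$ to $\Wtwo$, giving an intermediate manifold $\Wtwo \cup Z_1$. By Theorem~\ref{thm:3h}, $\Sz(\Wtwo \cup Z_1; L', \alpha')$ is the coequalizer of $\Psi_{I \times Y; \Delta_+, \alpha'}$ and $\Psi_{I \times Y; \Delta_-, \alpha'}$, i.e. the quotient of $\bigoplus_{\alpha \in \langle \alpha' \rangle} \Sz(\Wtwo; L \cup J_1, \alpha)$ by the image of their difference. Transporting this across the isomorphisms $\Phi$ of Theorem~\ref{thm:12h} and using~\eqref{eq:psiplus} and~\eqref{eq:psiminus}, the coequalizer relation becomes: for a general element $v \otimes X^{N-1}$ we set $\Psi_{I \times Y; \Delta_+}(v \otimes X^{N-1}) - \Psi_{I \times Y; \Delta_-}(v \otimes X^{N-1}) = v - \bPsi_{I \times \del \Wone; \Sigma_1((N-1)\bullet), \alpha'}(v) \sim 0$, yielding~\eqref{eq:P2}; and for $v \otimes X^n$ with $n \le N-2$ we get $0 - \bPsi_{I \times \del \Wone; \Sigma_1(n\bullet), \alpha'}(v) \sim 0$, yielding~\eqref{eq:P1}. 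Thus imposing the relations~\eqref{eq:P1}–\eqref{eq:P2} for $j=1$ on $\bigoplus_{\alpha \in \langle \alpha' \rangle}\cSz(\Wone; K, L)$ computes $\Sz(\Wtwo \cup Z_1; L', \alpha')$.

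Next I would iterate: attaching $S_2, \dots, S_p$ in turn, at each stage applying Theorem~\ref{thm:3h} to the current manifold. The one point requiring care is that after the first three-handle attachment the ambient four-manifold is no longer a two-handlebody, so to run the next step I need a description of $\Sz$ of the partially-constructed manifold with a new equator $J_2$ adjoined — but this is supplied by the same argument, since $J_2$ is a small split unknot and the three-handle formula of Theorem~\ref{thm:3h} tensors with the $\KhRN(J_2)$ factor, exactly as in the derivation of~\eqref{eq:bigdiagram}. Here one uses that the surfaces $\Sigma_j$, the disks $\Delta_\pm^{(j)}$, and the annuli $I \times L$ can all be taken disjoint, so the cobordism maps for distinct three-handles commute (via the composition law~\eqref{eq:compose}) and the relations~\eqref{eq:P1}–\eqref{eq:P2} for different $j$ can be imposed in any order, the end result being the simultaneous quotient. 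Finally, attaching the four-handles changes nothing: by Proposition~\ref{prop:threefour} (the $k=4$ case), the inclusion induces an isomorphism on $\Sz$, and it is compatible with the homology-class decomposition, so $\Sz(\Wfour; L, \alpha') \cong \Sz(\Wthree; L, \alpha')$.

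The main obstacle I anticipate is bookkeeping around the relative homology classes: one must check that the set $\ap$ of classes $\alpha \in H_2^L(\Wtwo;\Z)$ lying over $\alpha' \in H_2^L(\Wfour;\Z)$ behaves correctly under the successive quotients, i.e. that $H_2^L(\Wthree;\Z) \cong H_2^L(\Wtwo;\Z)/([S_1],\dots,[S_p])$ and that each application of Theorem~\ref{thm:3h} peels off exactly one class $[S_j]$, with the intermediate direct sums $\bigoplus_\alpha$ nesting consistently. This is a Mayer--Vietoris / long-exact-sequence verification of the type already carried out around~\eqref{eq:MVi} and in the discussion following Theorem~\ref{thm:12h}; the remaining steps are then a routine diagram chase assembling the per-handle coequalizer descriptions into the single quotient~\eqref{eq:P1}–\eqref{eq:P2}.
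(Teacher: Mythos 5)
Your proposal is correct and follows essentially the same line as the paper's proof: identify $\Sz(\Wtwo;L\cup J_1\cup\dots,\alpha)$ with the cabled skein lasagna module via Theorem~\ref{thm:12h}, apply the coequalizer description of Theorem~\ref{thm:3h} one three-handle at a time, translate $\Psi_{I\times Y;\Delta_\pm}$ through~\eqref{eq:psiplus} and~\eqref{eq:psiminus} into the relations~\eqref{eq:P1}--\eqref{eq:P2}, and dispose of four-handles with Proposition~\ref{prop:threefour}. The only blemish is a one-line slip where you describe the coequalizer as a quotient of $\bigoplus_\alpha\Sz(\Wtwo;L\cup J_1,\alpha)$ rather than of the codomain $\bigoplus_\alpha\Sz(\Wtwo;L,\alpha)$, but your next sentence applies the relations on $\cSz(\Wone;K,L)$ correctly, and your elaboration of the iteration over several three-handles (disjointness of the $\Sigma_j$, $\Delta_\pm^{(j)}$ so the per-handle relations commute) fleshes out what the paper simply calls ``straightforward.''
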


\begin{proof}
First, note that the addition of \fourh-handles does not affect the skein lasagna
module, in view of Proposition~\ref{prop:threefour}. Thus, we can consider $\Wthree$
instead of $\Wfour$.

The skein lasagna module of $L$ viewed in the boundary of $\del W$ is given by
$\cSz(\Wone; K, L)$ according to Theorem~\ref{thm:12h}. When we add a \threeh-handle, we
divide by the relations 
\begin{equation}
\label{eq:psipsi}
\Psi_{I \times Y ; \Delta_+, \alpha'}(v \otimes X^n)= \Psi_{I \times Y; \Delta_-, \alpha'}(v \otimes X^n),
\end{equation} 
as proved in Theorem~\ref{thm:3h}. In terms of the identifications $\Phi$ from
Theorem~\ref{thm:12h}, the left hand side of \eqref{eq:psipsi} is given by
Equation~\eqref{eq:psiplus}, and the right hand side by
Equation~\eqref{eq:psiminus}. We thus get relations of the form \eqref{eq:P1}
and \eqref{eq:P2}. The generalization to multiple \threeh-handles is
straightforward.
\end{proof}

Theorem~\ref{thm:main} gives a description of an arbitrary skein lasagna module
in terms of skein lasagna modules for links in the boundary of $\Wone=\natural^m(S^1
\times B^3)$, and cobordism maps for surfaces in $I \times \del \Wone$. In the next
section we will obtain a further reduction to links in $S^3$ and cobordism maps
between them, under the additional constraint of working with field
coefficients; see Theorem~\ref{thm:mainonehandle}.


\section{One-handles}
\label{sec:1h}

Consider \fourm-manifolds $\Wgeneric$ and $\Whandle$, where $\Whandle$ is the result of attaching a
finite number of \oneh-handles to $\Wgeneric$. The boundary of the cocore of each
\oneh-handle is a $2$-dimensional sphere $S^2\subset \del \Whandle$ that generically
intersects links $L\subset \del \Whandle$ in a finite set of points. In this section
we aim to compute $\Sz(\Whandle;L)$ in terms of the invariants $\Sz(\Wgeneric;R\cup
\bigsqcup_i (T_i \sqcup \overline{T_i}))$ of
the \fourm-manifold $\Wgeneric$ and some links $R\cup
\bigsqcup_i (T_i \sqcup \overline{T_i})\subset \del \Wgeneric$ related to $L$.

Throughout this section we will work with coefficients in a field $\k$. Under
this assumption $\KhR_N$ is strictly monoidal under disjoint union (without
$\mathrm{Tor}$ terms) and sends mirror links to dual link homologies (without
$\mathrm{Ext}$ terms). As a consequence, $\Sz$ is monoidal under (boundary)
connect sum; see \cite[Theorem 1.4 and Corollary 7.3]{MN}. We leave the
investigation of the behavior under more general coefficient rings to future
work.

\subsection{One-handles away from links}
We first consider the case when $L$ is disjoint from the cocores of the
\oneh-handles. Up to a small isotopy, we may even assume that $L$ is disjoint from the
entire boundary of the added \oneh-handles, i.e. that $L\subset \del \Wgeneric$. As in
Proposition~\ref{prop:threefour}, the corresponding invariants are related by a
canonical map and we have:

\begin{lemma}\label{lem:onehaway} The inclusion $i\colon (\Wgeneric,L) \to (\Whandle,L)$ induces an isomorphism
    \[i_*\colon \Sz(\Wgeneric;L,\k) \xrightarrow{\cong} \Sz(\Whandle;L,\k)\]
\end{lemma}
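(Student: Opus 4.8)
The plan is to construct an explicit inverse to $i_*$ by pushing lasagna fillings in $\Whandle$ off the cocores of the added $\oneh$-handles and into $\Wgeneric$, exactly in the spirit of the proof of Theorem~\ref{thm:3h}. Since a $\oneh$-handle is attached along a $0$-sphere and its cocore is a properly embedded $3$-ball whose boundary is the belt sphere $S^2 \subset \del \Whandle$, the dual picture is dimensionally the same as the $\threeh$-handle case read backwards: the cocore of the $\oneh$-handle has codimension one in $\Whandle$, so a generic surface can be isotoped to intersect it cleanly, but — and this is the key point — a lasagna filling $F'$ in $\Whandle$ whose surface $\Sigma'$ and input balls are all disjoint from the cocores of the $\oneh$-handles is literally a lasagna filling in $\Wgeneric$ (using that $L \subset \del \Wgeneric$ already). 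So first I would record that any lasagna filling of $\Whandle$ can be isotoped rel $\del \Whandle$ to be disjoint from the (finitely many, one-dimensional-in-the-sense-of-codimension-3-cocore... no:) the cocores, which are $3$-dimensional. Wait — here the relevant fact is that the \emph{cocore of a $1$-handle is a $3$-ball}, so a $2$-dimensional surface can be made disjoint from it by a generic isotopy, and an input $4$-ball can be shrunk and isotoped off it as well. This gives a map $\Phi^{-1}\colon \Sz(\Whandle; L, \k) \to \Sz(\Wgeneric; L, \k)$ on the level of fillings.

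Next I would check well-definedness of $\Phi^{-1}$, which is where the $\threeh$-handle argument must be adapted. Using Lemma~\ref{lem:fixballs}, fix balls $R_i \subset \Wgeneric$ away from all the added $\oneh$-handles, and track the generating equivalences: multilinearity and ball-replacements in the $R_i$ transport verbatim from $\Whandle$ to $\Wgeneric$; the only issue is an isotopy of surfaces in $\Whandle$ that sweeps across a cocore $3$-ball. Generically such a sweep happens at finitely many times, and at each such time the two fillings in $\Wgeneric$ differ by a move supported near the belt sphere $S^2$. The crucial difference from the $\threeh$-handle case is that here the sphere $S^2$ \emph{bounds a $3$-ball on both sides} once we are in $\Whandle$ (the cocore on one side, and the rest of $\del \Whandle$ rel the handle on the other — or more precisely: the belt sphere of a $\oneh$-handle is unknotted and bounds a ball in $\del \Whandle$, and also, pushing slightly, in $\Whandle$). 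Hence the ``two hemispheres'' cobordisms $\Delta_\pm$ of Theorem~\ref{thm:3h} are now \emph{both} isotopic rel boundary to trivial cobordisms inside $\Wgeneric$ itself (since $L$ does not meet the $\oneh$-handle, there is no annular part $A$ forcing anything nontrivial), so the ``$\im(f)$'' correction term vanishes entirely and $\Phi^{-1}$ descends to $\Sz(\Wgeneric; L, \k)$. Concretely: $J$ can be taken to be an unknot bounding a disk $\Delta_+$ in a ball in $\del \Whandle$ disjoint from the handle, and $\Delta_-$ then bounds a disk in $\Wgeneric$ by capping off through the complementary ball; the difference map $f$ is therefore zero on the nose, not just up to equivalence.

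Finally I would verify $\Phi^{-1} \circ i_* = \Id$ and $i_* \circ \Phi^{-1} = \Id$. The first is immediate: a filling already in $\Wgeneric$ is disjoint from the cocores, so pushing it off does nothing. The second follows because for any filling $F'$ of $\Whandle$, the isotopy used to make it disjoint from the cocores is an isotopy rel $\del \Whandle$, hence $[F'] = [i_*\Phi^{-1} F']$ in $\Sz(\Whandle; L, \k)$. I expect the main obstacle to be purely bookkeeping: making precise the claim that a generic isotopy of a surface-with-input-balls in $\Whandle$ meets each $3$-dimensional cocore in a finite sequence of ``hemisphere swap'' moves across the belt sphere, and that each such move is trivial because the belt sphere bounds balls on both sides with no obstructing boundary link. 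One should also note the field-coefficient hypothesis is not strictly needed for this particular lemma — it is carried along for uniformity with the rest of Section~\ref{sec:1h} — but I would not belabor that point.
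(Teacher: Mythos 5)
There is a genuine gap, and it is a fundamental one: your dimension count is wrong. The cocore of a $1$-handle $D^1 \times D^3$ is $\{0\} \times D^3$, a \emph{$3$-ball}, which has codimension $1$ in the ambient four-manifold $\Whandle$. A $2$-dimensional lasagna sheet therefore generically meets it in a set of dimension $2 + 3 - 4 = 1$, i.e.\ a tangle --- not the empty set. You explicitly write that ``a $2$-dimensional surface can be made disjoint from it by a generic isotopy,'' but that is simply false; it is the opposite of the situation for the cocore of a $3$-handle, which is a $1$-ball and can be avoided by a surface since $2 + 1 - 4 < 0$. So the entire strategy of ``push the filling off the cocores and then the two pictures literally agree'' does not get off the ground, and the subsequent analysis of the ``hemisphere swap'' moves is analyzing a situation that never arises and missing the one that does.

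The paper's actual proof (a generalization of \cite[Theorem 1.4]{MN}) accepts that a generic filling $F$ of $\Whandle$ intersects a neighborhood of each cocore in an identity cobordism on a nonempty link $K \subset B^3$, and then \emph{cuts} this neck: it replaces the identity cobordism on $K$ by a sum of pairs of input balls labelled by basis and dual basis elements of $\KhR_N(K)$. The resulting linear combination of fillings is now supported away from the cocores, so it can be isotoped into $\Wgeneric$; and the neck-cutting lemma \cite[Lemma 7.2]{MN} guarantees the replacement is an equivalence in $\Sz(\Whandle; L)$. This is also exactly where the field hypothesis enters: you need a basis and dual basis for $\KhR_N(K)$ with the right pairing behavior, which is what working over $\k$ (and the resulting strict monoidality / absence of $\mathrm{Tor}$ and $\mathrm{Ext}$ terms) provides. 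Your closing remark that the field hypothesis is ``not strictly needed for this particular lemma'' is therefore also incorrect --- it is precisely what makes the neck-cutting step work, and without neck-cutting there is no proof, because of the codimension problem above.
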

\begin{proof}
    The proof is a straightforward generalization of the proof of \cite[Theorem
    1.4]{MN}, which deals with boundary connected sums. The map $i_*$ is induced
    by the map sending lasagna fillings of $(\Wgeneric,L)$ to lasagna fillings of
    $(\Whandle,L)$ along the embedding $i$. The inverse is given on lasagna fillings
    $F$ in $(\Whandle,L)$ by looking at their intersection with a neighborhood of the
    cocores of all \oneh-handles. Up to a small isotopy, each such intersection is
    an identity cobordism on a link $K\subset B^3$. The inverse map is given by
    replacing it by a sum of pairs of input balls, labelled by basis and dual
    basis elements of $\KhR_N(K)$ respectively. The resulting linear
    combination of fillings can be isotoped into $\Wgeneric$, and is equivalent to the
    original filling according to the neck-cutting lemma (Lemma 7.2 in
    \cite{MN}).
\end{proof}

\begin{corollary}
    \label{cor:SoneBthree}
    There are canonical isomorphisms 
    \[\k \xrightarrow{\cong} \Sz(S^1 \times B^3;\emptyset,\k), \qquad \k
    \xrightarrow{\cong} \Sz(S^1 \times S^3,\k)\] each sending $1\in \k$ to the
    respective empty lasagna filling.  
\end{corollary}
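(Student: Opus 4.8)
The plan for the first isomorphism is to observe that $S^1\times B^3$ is obtained from the \zeroh-handle $B^4$ by attaching a single \oneh-handle, so that Lemma~\ref{lem:onehaway} with $\Wgeneric=B^4$ and $L=\emptyset$ yields an isomorphism $i_*\colon \Sz(B^4;\emptyset,\k)\xrightarrow{\cong}\Sz(S^1\times B^3;\emptyset,\k)$ carrying the empty lasagna filling to the empty lasagna filling. Since $\Sz(B^4;\emptyset,\k)=\KhRN(\emptyset,\k)=\k$ with $1\in\k$ represented by the empty filling, the composite is the desired canonical isomorphism.

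For the second isomorphism I would use the standard handle decomposition of $S^1\times S^3$ having a single handle of each index $0,1,3,4$: the $0$- and \oneh-handles give $S^1\times B^3$, then a \threeh-handle is attached along a sphere $S\subset Y:=\del(S^1\times B^3)=S^1\times S^2$ isotopic to $\{pt\}\times S^2$ to produce a \fourm-manifold $W'$ with $\del W'=S^3$, and finally a \fourh-handle caps it off. By Proposition~\ref{prop:threefour} the \fourh-handle attachment induces an isomorphism $\Sz(W';\emptyset,\k)\xrightarrow{\cong}\Sz(S^1\times S^3,\k)$ compatible with empty fillings, so it suffices to prove $\Sz(W';\emptyset,\k)\cong\k$.

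To compute $\Sz(W';\emptyset,\k)$ I would invoke Theorem~\ref{thm:3h} (valid with $\k$-coefficients by the same arguments), in the module-theoretic form of the Remark following Example~\ref{ex:cancellation}: with $W=S^1\times B^3$,
$$\Sz(W';\emptyset,\k)\ \cong\ \Sz(W;\emptyset,\k)\,\big/\,(A_0-1,\,A_1,\dots,A_{N-1}),$$
where $A_0,\dots,A_{N-1}$ denote the images in $\End(\Sz(W;\emptyset,\k))$ of the generators of the algebra $\Sz(S^2\times D^2;\emptyset,\k)$, acting by gluing lasagna fillings in a neighbourhood of $S$. By the first part, $\Sz(W;\emptyset,\k)\cong\k$ is generated by the empty filling, so everything reduces to identifying this action. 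The key geometric input is that, after pushing $S$ off the boundary into the interior, the resulting $2$-sphere is isotopic in $W=S^1\times B^3$ to $\{pt\}$ times a small sphere inside $B^3$, and hence bounds an embedded \threeh-ball in $W$; therefore gluing the closed surface $S^2\times\{0\}\subset S^2\times D^2$ decorated with $N-1-i$ dots near $S$ produces a dotted $2$-sphere bounding a ball, which evaluates to $\delta_{i,0}$ times the empty filling. Thus $A_0$ acts as $1$ and $A_1,\dots,A_{N-1}$ act as $0$ on $\Sz(W;\emptyset,\k)\cong\k$, the ideal $(A_0-1,A_1,\dots,A_{N-1})$ vanishes, and $\Sz(W';\emptyset,\k)\cong\k$. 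Tracing back through the isomorphisms shows that the empty filling of $S^1\times S^3$ corresponds to $1\in\k$.

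I expect the main obstacle to be exactly this last identification of the module action, i.e. checking (in the language of Theorem~\ref{thm:3h}) that both cobordism maps $\Psi_{I\times Y;\Delta_+}$ and $\Psi_{I\times Y;\Delta_-}$ send $v\otimes X^n$ to $\delta_{n,N-1}\,v$. This hinges on the fact that the \threeh-handle of $S^1\times S^3$ is attached along a sphere which, pushed into the interior, bounds a ball in $S^1\times B^3$, combined with the standard normalization that a dotted $2$-sphere in a ball evaluates to $1$ precisely when it carries $N-1$ dots. With this in hand, the rest of the argument is formal, parallel to Example~\ref{ex:cancellation}.
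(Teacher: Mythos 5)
Your proof of the first isomorphism is exactly the paper's. For the second isomorphism you take a genuinely different route. The paper proves $\Sz(S^1 \times S^3, \k) \cong \k$ by a direct neck-cutting argument: given a lasagna filling, cut along a fiber $\{x\} \times S^3$, replace the identity cobordism on the intersection link $K$ by a sum of pairs of input balls labeled by dual bases of $\KhRN(K,\k)$, and observe that the result is a closed filling supported in a single $B^4$ --- the same device used to prove Lemma~\ref{lem:onehaway}. You instead run the paper's own handle-decomposition machinery end to end: decompose $S^1 \times S^3$ into one handle of each index $0,1,3,4$, reduce the $3$-handle step via the module-theoretic reformulation of Theorem~\ref{thm:3h} from the Remark after Example~\ref{ex:cancellation}, and supply the crucial geometric input that the attaching sphere $\{pt\}\times S^2 \subset S^1\times S^2$ pushed into $S^1\times B^3$ bounds a ball, so that the $\Sz(S^2\times D^2;\emptyset,\k)$-action on $\Sz(S^1\times B^3;\emptyset,\k)\cong\k$ is the augmentation $A_0\mapsto 1$, $A_i\mapsto 0$ for $i>0$, making the quotient trivially $\k$; then Proposition~\ref{prop:threefour} handles the $4$-handle. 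Both routes are correct and trace $1\in\k$ to the empty filling. The paper's is shorter and more elementary, needing only the neck-cutting lemma; yours is a nice consistency check of Theorem~\ref{thm:3h} in the case of a $3$-handle attached along a sphere bounding a ball (the trivial case, complementary to Example~\ref{ex:cancellation}), at the modest cost of porting the $\Z$-coefficient statements of Section~\ref{sec:23} to $\k$-coefficients, which is indeed harmless.
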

\begin{proof} The first isomorphism is given by a \oneh-handle attachment to
$(B^4,\emptyset)$ as in Lemma~\ref{lem:onehaway}. The second isomorphism can be
proved similarly: Let $F$ be a lasagna filling of $S^1 \times S^3$ and consider
its intersection with a fiber $\{x\}\times S^3$. Up to a small isotopy, we may
assume that the filling $F$ intersects $\{x\}\times S^3$ transversely (in lasagna sheet,
not in input balls) and disjointly from $\{x\}\times \{\text{north pole}\}$.
Then for small $\epsilon>0$, the intersection $F\cap [x-\epsilon,
x+\epsilon]\times (S^3\setminus \text{north pole})$ is an identity cobordism on
a link $K$. We replace this by a sum over pairs of input balls labelled with
basis and dual basis elements of $\KhR_N(K)$ respectively. The resulting closed
lasagna filling is supported in a single $B^4$ and can, thus, be identified with
a scalar multiple of the empty filling. 
\end{proof}

\begin{remark}
    It is instructive to evaluate the inverse to the canonical isomorphisms from
    Corollary~\ref{cor:SoneBthree} on surfaces of revolution generated by links.
    Any framed, oriented link $K\subset B^3$ or $S^3$ defines a
    vegetarian\footnote{A lasagna filling consisting only of a surface, without
    input \emph{meat balls}.} lasagna filling $S^1 \times K$ of $S^1 \times
    B^3$, which evaluates to a scalar multiple of the empty lasagna filling. It
    follows from the proofs of Lemma~\ref{lem:onehaway} and
    Corollary~\ref{cor:SoneBthree} that this scalar is the trace of the identity
    map on $\KhR_N(K)$.  Here it is important to take the Koszul signs in the
    symmetric monoidal structure on (homologically and quantum) bigraded vector
    spaces into account. The trace is thus $\tr(\Id_{\KhR_N(K)}) =
    \chi_{q=1}(\KhR_N(K)) =  \pm N^{|\pi_0(K)|}$ , i.e. the $\gl_{N}$ quantum
    link polynomial of $K$, specialized at $q=1$. More generally, any
    endocobordism of $K$ defines a lasagna filling of $S^1\times B^3$ that is a
    multiple of the empty filling, with coefficient given by the graded trace of
    the induced endomorphism of $\KhR_N(K)$; see e.g.~\cite[Section 6]{Jac},
    \cite[Section 10.1]{MR2174270}, \cite[Theorem D]{BPW} for related discussions of
    Lefschetz traces in the case of Khovanov homology.
\end{remark}

\subsection{Cutting and gluing 1-handles}

Consider the process of \emph{cutting a lasagna filling} $F$ of $\Wone =
\natural^m(S^1 \times B^3)$ with boundary $L$ along the cocores $C_i \cong
\mathrm{pt} \times B^3$ of the \oneh-handles for $1\leq i\leq m$. Let us assume
that the lasagna sheet $\Sigma$ of $F$ intersects the cocores transversely in
tangles $T_i:=\Sigma\cap C_i$. In particular, the link $L$ intersects the belt
spheres $S_i:=\del C_i$ geometrically in $2p_i$ points, the boundary points of
the tangle $T_i$. The algebraic intersection numbers are all zero, since $L$ is
null-homologous, as witnessed by $F$. In this way, we obtain a lasagna filling
$\mathrm{cut}(F)$ of $\Wone\setminus \bigsqcup_i n(C_i)\cong B^4$ with boundary link
$$L_T:= (L\setminus \bigsqcup_i(L \cap S_i)) \cup (T_i \cup \overline{T_i}).$$
The latter is obtained by cutting $L$ open at the $2p_i$-tuples of boundary
points and inserting copies of the tangles $T_i$ and $\overline{T_i}$,
schematically: 
\[
    \begin{tikzpicture}[anchorbase,scale=.2]
        \bgsphere{0}{0}{3}
        \fgsphere{0}{0}{3}
        \bentline{1}{0}{thick}
        \bentline{1}{-6}{thick} 
        \opentrefoil{0}{-.5}{1}{1}{orange, thick, dotted}
        \bentline{1}{-4.8}{orange}
        \node at (0,2) {\tiny${\color{red}\bullet}$};
        \node at (-1,1.5) {\tiny${\color{red}\bullet}$};
        \bentline{1}{-1}{thick,red}
        \bentline{1}{-1.5}{thick,red}
        \end{tikzpicture}   
        \mapsto
        \begin{tikzpicture}[anchorbase,scale=.2]
            \sphere{0}{0}{3}
            \sphere{7}{0}{3}
            \bentlinegap{1}{0}{thick}{0}
            \bentlinegap{1}{-6}{thick}{0} 
            \opentrefoil{0}{-.5}{1}{1}{thick, red}
            \opentrefoil{7}{-.5}{1}{-1}{thick, red}
            \bentlinegap{1}{-4.8}{orange}{0} 
            \bentlinegap{1}{-1}{thick,red}{0}
            \bentlinegaps{1}{-1.5}{thick,red}{0}
            \node at (0,2) {\tiny${\color{red}\bullet}$};
            \node at (-1,1.5) {\tiny${\color{red}\bullet}$};
            \node at (7,2) {\tiny${\color{red}\bullet}$};
            \node at (8,1.5) {\tiny${\color{red}\bullet}$};  
            \end{tikzpicture}    
\]

Of course, the procedure of cutting lasagna fillings does not
describe a well-defined map on the level of $\Sz$ since it does not respect the
skein relations. Instead we consider the reverse operation. 

The process of \emph{gluing a lasagna filling} works as follows. Let $F'$ be a
lasagna filling of $B^4$ with boundary link $L_T$ as above; i.e., inside
$S^3=\del B^4$ we have $m$ pairs of embedded 3-balls $B_i\cup \overline{B_i}$,
such that $L_T \cap B_i = T_i$ and $L_T\cap \overline{B_i} = \overline{T}_i$ for
$1\leq i\leq m$. Denote the numbers of boundary points by $2p_i:=|\partial
T_i|$. Now we attach $m$ \oneh-handles with core-parallel lasagna sheets $I \times
T_i \subset I \times B^3$ along the $B_i\cup \overline{B_i}\cong S^0 \times B^3$
to obtain a lasagna filling of $\Wone$ with boundary $L$. Since the relations in
$\Sz$ are local, this induces a map:
\begin{equation}
    \label{eq:1hgluing}
\mathrm{glue}_{L_T}\colon \Sz(B^4; L_T,\k)
\{(\textstyle\sum_i p_i)(N-1)\}  
\to \Sz(\Wone; L,\k)
\end{equation}
The grading shift is there to compensate the change in Euler characteristic of
the surfaces in lasagna fillings upon gluing.

\begin{lemma}\label{lem:surj} For every lasagna filling $F$ of $\Sz(\Wone; L,\k)$, there exists a
framed $L_T\subset \del B^4$, such that $F$ is contained in the image of $\mathrm{glue}_{L_T}$.
\end{lemma}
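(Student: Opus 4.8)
The plan is to prove that every lasagna filling $F$ of $\Wone$ arises (up to equivalence in $\Sz(\Wone; L, \k)$, in fact on the nose after a small isotopy) by the gluing construction applied to its own cut. First I would put $F$ in general position with respect to the cocores $C_i$ of the \oneh-handles: after a small isotopy rel $\del \Wone$ we may assume that the lasagna sheet $\Sigma$ of $F$ meets each $C_i$ transversely in a tangle $T_i$, that the input balls of $F$ are disjoint from the $C_i$, and that $L$ meets each belt sphere $S_i = \del C_i$ transversely in $2p_i$ points. This transversality is achievable because each $C_i$ has codimension $1$ in $\Wone$, so it can be made transverse to the surface $\Sigma$ (codimension $2$) and disjoint from the finitely many input balls; this is the only geometric input and it is routine.

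Next I would perform the cut: choose tubular neighborhoods $n(C_i) \cong C_i \times (-1,1)$ small enough to be disjoint from the input balls, with $\Sigma \cap n(C_i) \cong T_i \times (-1,1)$ a product on the tangle $T_i$. Removing $\bigsqcup_i n(C_i)$ turns $\Wone$ into (a manifold diffeomorphic to) $B^4$, and it turns $F$ into a lasagna filling $F' = \mathrm{cut}(F)$ of $B^4$ whose boundary link is exactly $L_T = (L \setminus \bigsqcup_i (L\cap S_i)) \cup \bigcup_i (T_i \cup \overline{T_i})$, where the two boundary copies $B_i$, $\overline{B_i}$ of $n(C_i)$ in $S^3 = \del B^4$ carry $T_i$ and $\overline{T_i}$ respectively. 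All input balls and labels of $F$ are retained unchanged by $F'$, since the neighborhoods were chosen to avoid them.

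Then I would observe that the gluing map $\mathrm{glue}_{L_T}$ of \eqref{eq:1hgluing}, applied to the class $[F'] \in \Sz(B^4; L_T, \k)$, reattaches \oneh-handles with core-parallel sheets $I \times T_i$ along $B_i \cup \overline{B_i}$, and this reconstructs precisely the filling $F$: the reattached neighborhoods $n(C_i)$ with their product sheets $T_i \times (-1,1)$ are exactly what was removed. Hence $\mathrm{glue}_{L_T}([F']) = [F]$ in $\Sz(\Wone; L, \k)$, so $F$ lies in the image of $\mathrm{glue}_{L_T}$. Since a general element of $\Sz(\Wone; L, \k)$ is a linear combination of classes of lasagna fillings, and each such filling is handled as above (though possibly with a different link $L_T$, depending on the combinatorics of how $F$ meets the belt spheres), the lemma follows. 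The only mild subtlety — not really an obstacle — is bookkeeping: the combinatorial type of $L_T$ (the tangles $T_i$ and the pattern of reconnection of $L$) depends on $F$, so the statement is an existence statement for $L_T$ rather than a single map; but for a fixed $F$ everything is canonical once the general-position isotopy is fixed.
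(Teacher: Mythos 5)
Your proposal is correct and takes essentially the same approach as the paper: put $F$ in general position with respect to the cocores, cut to get a filling of $B^4$, and observe that gluing reconstructs $F$, so cutting is a right-inverse to gluing. The paper's proof is just a terser version of the same argument.
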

\begin{proof}
    By a small isotopy, we may assume that $F$ satisfies the assumption of the
    cutting procedure described above. The statement now follows since cutting,
    albeit ill-defined, is manifestly a right-inverse to gluing.
\end{proof}

It follows that the gluing maps from \eqref{eq:1hgluing} assemble to a
surjective map from a direct sum of shifts of $\Sz(B^4; L_T,\k)$ to $\Sz(\Wone; L,\k)$. Here, the
sum is indexed by all ways of writing $L$ as a contraction of links $L_T$
obtained by drilling out pairs of tangles $T_i\cup \overline{T_i}$ and resealing
the boundary points across the \oneh-handles. It remains to describe the kernel. 

\begin{definition} 
    \label{def:threeball}
    For $p\in \N$ fix a configuration $P_p$ of $2p$ framed points in $S^2=\del
    B^3$, partitioned into two halves with opposite co-orientations. We define a
    category $\Sz(B^3;P_p)$ enriched in bigraded $\k$-vector spaces with:
    \begin{itemize}
        \item objects: framed, oriented tangles $T$ in $(B^3;P_p)$ inducing the
        given orientation on $P_p$
        \item morphisms given by
        \begin{align}\Hom_{\Sz(B^3;P_p,\k)}(T_1,T_2)&:= 
        \KhRN(T_2\cup_{P_p} \overline{T_1},\k)\{p(N-1)\} \label{eq:KhRmor} \\ &= \Sz(B^4; T_2\cup_{P_p}
        \overline{T_1},\k)\{p(N-1)\} \label{eq:skeinmor}
        \end{align}
    \end{itemize}
    with (grading-preserving) composition maps induced in the case of the
    right-hand side of \eqref{eq:KhRmor} by the action of merging cobordisms, as
    described in \cite[Section 6.1 (vertical composition of 2-morphisms)]{MWW},
    and in the case of \eqref{eq:skeinmor} induced by the gluing of lasagna
    fillings of balls.
\end{definition}

\begin{lemma} \label{lemma:module} Let $\Wgeneric$ be a smooth, oriented, connected, compact \fourm-manifold. Fix
$B^3 \subset \del \Wgeneric$ and consider a link $L_1$ that intersects $B^3$ in a tangle
$T_1$ with boundary $\del T_1=P_p$, i.e. $L_1=R\cup_{P_p} T_1$. Now let $T_2$ be
another such tangle and $L_2 = R \cup_{P_p} T_2$, then we have a grading-preserving \emph{gluing
map}
$$
\Sz(\Wgeneric; L_1,\k) \otimes 
\Sz(B^4; T_2\cup_{P_p}\overline{T_1},\k)\{p(N-1)\} 
\to \Sz(\Wgeneric; L_2,\k).
$$
Moreover, these gluing maps are compatible with composition in $\Sz(B^3;P_p,\k)$
in the sense that all diagrams of the following type commute:
\[
    \begin{tikzcd}[column sep=-2.45in,row sep=.2in] 
        &\Sz(\Wgeneric; L_1,\k) 
        \otimes 
        \Sz(B^4; T_2\cup_{P_p}\overline{T_1},\k)
        \otimes 
        \Sz(B^4; T_3\cup_{P_p}\overline{T_2},\k)\{2p(N-1)\} 
            \arrow[dr]\arrow[dl]
    & 
    \\
        \Sz(\Wgeneric; L_2,\k) 
        \otimes 
        \Sz(B^4; T_3\cup_{P_p}\overline{T_2},\k)\{p(N-1)\} 
            \arrow[dr]
    &  &
        \Sz(\Wgeneric; L_1,\k) 
        \otimes 
        \Sz(B^4; T_3\cup_{P_p}\overline{T_1},\k)\{p(N-1)\}   
            \arrow[dl]
    \\        
        & \Sz(\Wgeneric; L_3,\k) &
        \end{tikzcd}    
\]
\end{lemma}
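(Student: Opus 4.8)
The plan is to realize the gluing map as a special case of the cobordism maps $\Psi_{Z;G}$ of Section~\ref{sec:gluing}, with $Z$ the product cobordism on $\partial\Wgeneric$; then well-definedness and compatibility with composition become formal consequences of \eqref{eq:PsiZG} and \eqref{eq:compose}.

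First I would set up the gluing map. Put $Y := \partial\Wgeneric$ and let $Z := Y\times I$, viewed as a cobordism from $Y$ to a copy of $Y$, so that $\Wgeneric\cup_Y Z\cong\Wgeneric$ canonically (collapsing the collar), with boundary $Y\times\{1\}\cong Y$. Fix a collar $B^3\times I\subset Y\times I$ of the chosen ball $B^3\subset Y$. Straightening $B^4$ to the bicollar $B^3\times I$, a lasagna filling of $(B^4;T_2\cup_{P_p}\overline{T_1})$ is the same datum as a lasagna cobordism in $B^3\times I$ from $T_1$ on $B^3\times\{0\}$ to $T_2$ on $B^3\times\{1\}$, with side boundary $P_p\times I$ — here the orientation conventions on the faces of $B^3\times I$ are precisely the ones making the bottom tangle appear as $\overline{T_1}$ rather than $T_1$. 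Given a class $\phi\in\Sz(B^4;T_2\cup_{P_p}\overline{T_1},\k)$, I would place a representative of $\phi$ inside this collar and form its union with the product cobordism $R\times I$ on $R$, obtaining a lasagna filling $\widehat{G}_\phi$ of $Z$ with boundary $\overline{L_1}\cup L_2$; the gluing map is then $(x,\phi)\mapsto\Psi_{Z;\widehat{G}_\phi}(x)$. This is well defined and bilinear because $\phi\mapsto[\widehat{G}_\phi]$ is induced by a gluing operation on fillings (so it is linear and sends equivalent representatives to equivalent fillings, the equivalence relation being local), and because $\Psi_{Z;\widehat{G}_\phi}$ depends only on $[\widehat{G}_\phi]\in\Sz(Z;\overline{L_1}\cup L_2)$ and is linear in both slots. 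Grading-preservation is the Euler-characteristic count $\chi(\Sigma_{\widehat{G}_\phi}) = \chi(\Sigma_{G_\phi})+\chi(R\times I)-\chi(P_p\times I) = \chi(\Sigma_{G_\phi})-p$, compensated exactly by the shift $\{p(N-1)\}$, as in \eqref{eq:1hgluing}.

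For compatibility with composition I would apply \eqref{eq:compose} with $Z = Z' = Y\times I$ and $Z\cup Z'\cong Y\times I$, which gives $\Psi_{Z';\widehat{G}_\psi}\circ\Psi_{Z;\widehat{G}_\phi} = \Psi_{Z\cup Z';\,\widehat{G}_\phi\cup\widehat{G}_\psi}$. The stacked filling $\widehat{G}_\phi\cup\widehat{G}_\psi$ is the product $R\times I$ together with, inside $B^3\times I$, the stacking of $G_\phi$ (from $T_1$ to $T_2$) below $G_\psi$ (from $T_2$ to $T_3$); and this stacking of lasagna cobordisms in $B^3\times I\cong B^4$ is precisely the composition $\psi\circ\phi\in\Hom_{\Sz(B^3;P_p)}(T_1,T_3) = \Sz(B^4;T_3\cup_{P_p}\overline{T_1},\k)$ of Definition~\ref{def:threeball}. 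Hence $\Psi_{Z';\widehat{G}_\psi}\circ\Psi_{Z;\widehat{G}_\phi}$ equals the gluing map of $\psi\circ\phi$, which is exactly the commutativity of the displayed triangle; the grading shifts $\{2p(N-1)\}$ and $\{p(N-1)\}$ are consistent since $\chi$ drops by $p$ at each stacking.

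The content is entirely formal, so I do not expect a serious obstacle; the one point requiring care is the dictionary between the two descriptions of the hom-spaces — namely checking that, after straightening $B^4$ to $B^3\times I$, the ``merging cobordism'' composition of Definition~\ref{def:threeball} corresponds to the literal stacking of lasagna cobordisms used above, and that the mirror/framing conventions on the faces of $B^3\times I$ indeed produce $\overline{T_1}$. Verifying the grading-shift bookkeeping is the other routine ingredient.
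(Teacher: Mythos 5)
Your proof is correct and takes essentially the same approach as the paper, which simply declares the result ``straightforward on the level of lasagna fillings'' and notes that the map descends to the quotient because skein relations are local. You have usefully formalized this by packaging the gluing through the $\Psi_{Z;G}$ formalism of Section~\ref{sec:gluing} (with $Z = \partial\Wgeneric \times I$ a collar) and invoking \eqref{eq:compose} for the commutativity of the triangle; the Euler-characteristic bookkeeping and the identification of stacking in $B^3 \times I$ with the composition law \eqref{eq:skeinmor} in Definition~\ref{def:threeball} are both correct. One small remark: the caveat you raise at the end about matching the ``merging cobordism'' description \eqref{eq:KhRmor} to stacking is not actually needed, since you only ever use the $\Sz(B^4;\cdot)$ description \eqref{eq:skeinmor}, whose composition is defined precisely as gluing of ball fillings.
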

\begin{proof}
    Straightforward on the level of lasagna fillings. The map descends to the
    quotient since skein relations are local.
\end{proof}

The statement of Lemma~\ref{lemma:module} can be paraphrased as: the choice of a
$3$-ball with point configuration $P_p$ in $\del \Wgeneric$ equips $\Sz(\Wgeneric; -,
\k):=\bigoplus_{L}\Sz(\Wgeneric, L, \k)$ with the structure of a bigraded module for the
category $\Sz(B^3; P_p)$. (Here the direct sum is taken over all links $L$ that
intersect the boundary of the chosen $3$-ball in the fixed configuration $P_p$.)

\begin{theorem}
    \label{thm:mainonehandle}
Let $\Wone = \natural^m(S^1 \times B^3)$ with a nullhomologous link $L\subset \del
\Wone$ in the boundary that intersects the belt spheres of the \oneh-handles
transversely in $2p_i$ points for $1\leq i\leq m$. Let $R\subset
S^3 \setminus \bigsqcup_i (B_i\cup \overline{B_i})$ denote the tangle obtained
from $L$ by cutting open along the belt spheres. Then we have an isomorphism:
$$
\bigoplus_{\substack{\mathrm{tangles } ~ T_i \\|\del T_i|=2p_i}} 
\KhRN(R\cup \bigsqcup_i (T_i \sqcup \overline{T_i}), \k) 
\{(\textstyle\sum_i p_i)(N-1)\}  
\big/ \sim \; 
\xrightarrow{\cong} \; \Sz(\Wone; L,\k)
$$
where the relation $\sim$ is given by taking coinvariants for the actions of
$\Sz(B^3;P_{p_i},\k)$, i.e. by identifying the images of the actions
\[
\xymatrix@C=-5em{
    &  
    \KhRN(R\cup \bigsqcup_i (T_i \sqcup \overline{T'_i}),\k) \otimes \bigotimes_i \KhRN(T'_i \cup T_i,\k) \{p_i(N-1)\}\ar[dl]\ar[dr] 
    &   
    \\
    \KhRN(R\cup \bigsqcup_i (T_i \sqcup \overline{T_i}),\k) 
    &  
    & 
    \KhRN(R\cup \bigsqcup_i (T'_i \sqcup \overline{T'_i}),\k)  }
\]
for all pairs of tangles $T_i$, $T'_i$ with boundary $P_{p_i}$.
(Here we have omitted a global grading shift.)
\end{theorem}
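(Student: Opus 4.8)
The plan is to realize the asserted isomorphism as the map induced by the gluing maps from \eqref{eq:1hgluing}, and to construct its inverse out of the (a priori ill-defined) cutting operation. Using the identification $\Sz(B^4;-,\k)=\KhRN(-,\k)$, the maps $\mathrm{glue}_{L_T}$ assemble into a single map
\[
\Theta\colon \bigoplus_{T}\KhRN\bigl(R\cup\textstyle\bigsqcup_i(T_i\sqcup\overline{T_i}),\k\bigr)\{(\textstyle\sum_i p_i)(N-1)\}\longrightarrow \Sz(\Wone;L,\k),
\]
which is surjective by Lemma~\ref{lem:surj}. The first step is to check that $\Theta$ annihilates the coinvariant relations. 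A morphism in the $i$-th slot is represented by a lasagna filling $G$ of a ball $B^3\times I$ inserted in the $i$-th handle region; its two absorptions --- gluing $G$ next to the $B_i$-foot of the $i$-th \oneh-handle versus next to the $\overline{B_i}$-foot --- produce lasagna fillings of $\Wone$ related by an isotopy that drags $G$ through the core of that handle, hence equivalent in $\Sz(\Wone;L,\k)$. Formally this is the compatibility of the gluing maps with composition in $\Sz(B^3;P_{p_i},\k)$ established in Lemma~\ref{lemma:module}. So $\Theta$ descends to a surjection $\overline{\Theta}$ from the quotient.

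To prove $\overline{\Theta}$ injective I would build an inverse. Given a lasagna filling $F$ of $(\Wone,L)$, isotope it so that its sheet $\Sigma$ is transverse to every cocore $C_i$ and its input balls are disjoint from the $C_i$; by transversality $\Sigma$ is a product $T_i\times(-\epsilon,\epsilon)$ near $C_i$, with $T_i=\Sigma\cap C_i$. Cutting along the cocores produces a lasagna filling $\mathrm{cut}(F)$ of $B^4$ with boundary $R\cup\bigsqcup_i(T_i\sqcup\overline{T_i})$, and hence a class $[\mathrm{cut}(F)]\in\KhRN(R\cup\bigsqcup_i(T_i\sqcup\overline{T_i}),\k)$; I would let $\Xi[F]$ be its image in the quotient, extended linearly. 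Granting that $\Xi$ is well defined, one gets $\overline{\Theta}\circ\Xi=\mathrm{id}$ because cutting is a right inverse to gluing (the observation used in the proof of Lemma~\ref{lem:surj}), and $\Xi\circ\overline{\Theta}=\mathrm{id}$ because $\Xi\circ\Theta$ is, by construction, the quotient projection.

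The heart of the argument is the well-definedness of $\Xi$, i.e.\ invariance of $[\mathrm{cut}(F)]$ modulo $\sim$ under the generators of the skein lasagna equivalence and under the choice of transverse representative. Multilinearity in the labels and the replacement of an input ball by a filling are local moves; carried out in a sub-ball disjoint from all $C_i$ they become the corresponding relations already holding in $\KhRN(R\cup\bigsqcup_i(T_i\sqcup\overline{T_i}),\k)=\Sz(B^4;-,\k)$. For the remaining point, two transverse representatives of $F$ are joined by a generic one-parameter family of lasagna fillings of $\Wone$; along this family $\mathrm{cut}$ changes by isotopies inside $B^4$ (which may drag the $T_i$ rel $P_{p_i}$) together with finitely many elementary events at which $\Sigma$ or an input ball crosses some $C_i$. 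At each such event the data exchanged between the $B_i$-side and the $\overline{B_i}$-side across $C_i$ is a bounded cobordism --- a Morse birth, death or saddle on $T_i$, an isotopy of $T_i$ rel $P_{p_i}$, or a passing input ball --- which, again by the local product structure near $C_i$, is a morphism in $\Sz(B^3;P_{p_i},\k)$; by Lemma~\ref{lemma:module} its two absorptions, before and after the event, are precisely the two legs of a single coinvariant relation. (In particular, dragging $T_i$ rel $P_{p_i}$ corresponds to an invertible morphism, which identifies the summands indexed by $T_i$ and by its isotopic copy.) Hence $[\mathrm{cut}(F)]$ is constant in the quotient, $\Xi$ is well defined, and $\overline{\Theta}$ is an isomorphism; matching the omitted global grading shift is a bookkeeping check with the Euler characteristics of the surfaces being cut and glued.

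I expect the main obstacle to be exactly this generic-position analysis: showing that a generic path of lasagna fillings of $\Wone$ meets the cocores only in the listed elementary events, that an input ball crossing a cocore (which happens in codimension one) contributes no relation beyond an isotopy in $B^4$, and --- most delicately --- that the cobordism swapped across $C_i$ at each event is genuinely a morphism in $\Sz(B^3;P_{p_i},\k)$ whose two absorptions realize a single coinvariant relation, so that $\sim$ captures the full kernel of $\Theta$ and no more.
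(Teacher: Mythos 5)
Your proposal is correct and takes essentially the same approach as the paper. The paper also defines the map via the gluing morphisms $\mathrm{glue}_{L_T}$, checks that the coinvariant relations lie in the kernel, deduces surjectivity from Lemma~\ref{lem:surj}, and then proves injectivity by showing that if two lasagna fillings of $\Wone$ are related by a single move from Lemma~\ref{lem:fixballs} (with the fixed input ball chosen disjoint from the cocores), their cuts differ at most by a coinvariant relation --- which is exactly the well-definedness of your $\Xi$, written as a direct injectivity check rather than as the construction of a two-sided inverse. Your generic-position discussion (Morse events on $T_i$, isotopies rel $P_{p_i}$, passing input balls) is a more explicit account of the paper's ``isotopy supported in a neighborhood of the cocores'' case, and the resolution you sketch is the intended one: the morphisms in $\Sz(B^3;P_{p_i},\k)$ are themselves classes of lasagna fillings of a $3$-ball times interval, so they can carry input balls, and absorbing such a filling on the $B_i$- versus $\overline{B_i}$-side is a single coinvariant relation by Lemma~\ref{lemma:module}. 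One small caution: in your final paragraph you suggest that an input ball crossing a cocore ``contributes no relation beyond an isotopy in $B^4$,'' which is not quite right (and contradicts your own earlier, correct, listing of ``a passing input ball'' as a coinvariant event) --- in the cut-open $B^4$ the ball disappears at one foot and reappears at the other, so it is a genuine coinvariant relation, not merely an isotopy; but this does not affect the validity of the argument, since that relation is already included in $\sim$.
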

\begin{proof} The map is defined by first considering the direct sum of the gluing morphisms 
 \[\KhRN(R\cup \bigsqcup_i (T_i \sqcup \overline{T_i}),\k)\{(\textstyle\sum_i p_i)(N-1)\} \to \Sz(\Wone; L, \k) \] from
    \eqref{eq:1hgluing}. The coinvariants for the actions of $\Sz(B^3;P_{p_i},\k)$
    clearly lie in the kernel, so we get an induced map from the indicated
    quotient to $\Sz(\Wone; L,\k)$, which we again call the gluing map. It is
    surjective by Lemma~\ref{lem:surj}, so it remains to prove injectivity.
    
     Let
    $F_1, F_2$ be two equivalent linear combinations of lasagna fillings in
    $\Sz(\Wone; L,\k)$, and let $G_1,G_2$ be respective preimages under the gluing map. We want 
    to show that $G_1$ and $G_2$ are equivalent. Without loss of generality, we
    may assume that $F_1$ and $F_2$ are individual lasagna fillings (rather than
    linear combinations) and that they differ by a single move as in
    Lemma~\ref{lem:fixballs} with the relevant input ball fixed and disjoint
    from the cocores of the \oneh-handles in $\Wone$. If $F_1$ and $F_2$ differ by a
    replacement inside the fixed input ball or an isotopy supported away from
    the cocores, then $G_1$ and $G_2$ are equal in $\KhRN(R\cup \bigsqcup_i (T_i
    \sqcup \overline{T_i}),\k)$. If $F_1$ and $F_2$ differ by an isotopy supported
    in a neighborhood of the cocores, then $G_1$ or $G_2$ differ by an element
    of the subspace factored out. Since every isotopy of lasagna fillings can be
    factored in this way, we get that $G_1$ and $G_2$ are equivalent.
\end{proof}

Theorem~\ref{thm:mainonehandle} can also be summarized by saying that $\Sz(\Wone;
L,\k)$ is computed by the zeroth Hochschild homology of a tensor product of
$3$-ball categories, namely one for each handle, with coefficients in a bimodule
associated to the tangle $R$ that results from $L$ by cutting open along the
belt spheres. We will discuss the details of this perspective in a special case
in Section~\ref{subsec:threeballalg}. 

\begin{remark}
    Similarly to the \twoh-handle formula from Theorem~\ref{thm:12h}, the \oneh-handle
    formula from Theorem~\ref{thm:mainonehandle} expresses the skein module of
    the more complicated manifold as a quotient of a (countable) direct sum of
    invariants of simpler manifolds. A possibly relevant difference, however, is
    that the \twoh-handle formula features only finitely many summands with a
    given shift in quantum grading, whereas this number is infinite for the
    \oneh-handle formula. 
    
    The skein modules that have been computed using only the \twoh-handle formula,
    first and foremost in \cite{MN}, are locally finite-dimensional, i.e.
    finite-dimensional in each bidegree. It is an open question whether this is
    true for all \fourm-manifolds admitting handle decompositions without
    \oneh-handles. In the rest of this paper we will see that local
    finite-dimensionality may fail when \oneh-handles are present.
\end{remark}

Finally we comment on the functoriality of the \oneh-handle formula from
Theorem~\ref{thm:mainonehandle}. We have seen
that $\Sz(\Wone; L,\k)$ for $\Wone=\natural^m(S^1 \times B^3)$ is a colimit of link
homologies for links in $S^3$, which result from cutting $L$ along belt spheres
and inserting pairs of tangles. Now consider a link cobordism $S\subset \del \Wone
\times I=:Z$ from $L\subset \Wone$ to $L' \subset \Wone'$ where $\Wone' = \Wone \cup Z$. We
claim that the induced map $$\Psi_{Z;S}\colon  \Sz(\Wone; L,\k) \to \Sz(\Wone'; L',\k)$$
can also be expressed in terms of cobordism maps between links in $S^3$. Recall
that the cobordism map $\Psi_{Z;S}$ sends a lasagna filling $F$ of $\Wone$ to the
composite lasagna filling $F\cup S$ of $\Wone \cup Z$. In a generic situation,
cutting the cocores has the following local model. Here we display the filling
$F$ in the inner tube and $S$ in the outer, spherical shell.   
\[
    \begin{tikzpicture}[anchorbase,scale=.2]
        \bgsphere{0}{0}{3}
        \fgsphere{0}{0}{3}
        \bgsphere{0}{0}{5}
        \fgsphere{0}{0}{5}
        \bentline{1}{0}{thick,opacity=.3}
        \bentline{1}{-6}{thick,opacity=.3} 
        \bentline{1}{2}{thick}
        \bentline{1}{-8}{thick} 
        \opentrefoil{0}{-.5}{1}{1}{orange, thick, dotted}
        \bentline{1}{-4.8}{orange}
        \node[opacity=.5] at (0,2) {\tiny${\color{red}\bullet}$};
        \node[opacity=.5] at (-1,1.5) {\tiny${\color{red}\bullet}$};
        \bentline{1}{-1}{thick,red,opacity=.5}
        \bentline{1}{-1.5}{thick,red,opacity=.5}
        \bentline{1}{1}{thick,blue}
        \bentline{1}{0.5}{thick,blue}
        \node at (0,4) {\tiny${\color{blue}\bullet}$};
        \node at (-1,3.5) {\tiny${\color{blue}\bullet}$};
        \draw[blue, thick, dotted] (0,2) \pu (-1,3.5);  
        \draw[blue, thick, dotted] (-1,1.5) \pu (0,4);  
        \end{tikzpicture}   
        \mapsto
        \begin{tikzpicture}[anchorbase,scale=.2]
            \bgsphere{0}{0}{3}
            \fgsphere{0}{0}{3}
            \bgsphere{12}{0}{3}
            \fgsphere{12}{0}{3}
            \sphere{0}{0}{5}
            \sphere{12}{0}{5}
            \bentlinegap{1}{0}{thick,opacity=.3}{5}
            \bentlinegap{1}{-6}{thick,opacity=.3}{5} 
            \bentlinegap{1}{2}{thick}{5}
            \bentlinegap{1}{-8}{thick}{5} 
            \opentrefoil{0}{-.5}{1}{1}{thick, red}
            \opentrefoil{12}{-.5}{1}{-1}{thick, red}
            \bentlinegap{1}{-4.8}{orange}{5} 
            \bentlinegap{1}{-1}{thick,red,opacity=.5}{5}
            \bentlinegaps{1}{-1.5}{thick,red,opacity=.5}{5}
            \node[opacity=.5] at (0,2) {\tiny${\color{red}\bullet}$};
            \node[opacity=.5] at (-1,1.5) {\tiny${\color{red}\bullet}$};
            \node[opacity=.5] at (12,2) {\tiny${\color{red}\bullet}$};
            \node[opacity=.5] at (13,1.5) {\tiny${\color{red}\bullet}$};  
            \bentlinegap{1}{1}{thick,blue}{5}
            \bentlinegaps{1}{0.5}{thick,blue}{5}
            \node at (0,4) {\tiny${\color{blue}\bullet}$};
            \node at (-1,3.5) {\tiny${\color{blue}\bullet}$};
            \node at (12,4) {\tiny${\color{blue}\bullet}$};
            \node at (13,3.5) {\tiny${\color{blue}\bullet}$};  
            \draw[blue, thick] (0,2) \pu (-1,3.5);  
            \node at (-.5,2.65) {\tiny${\color{white}\bullet}$};  
            \draw[blue, thick] (-1,1.5) \pu (0,4);  
            \draw[blue, thick] (12,2) \pu (13,3.5);  
            \node at (12.5,2.65) {\tiny${\color{white}\bullet}$};  
            \draw[blue, thick] (13,1.5) \pu (12,4);   
            \end{tikzpicture}    
\]
Let $S_i$ denote the tangle in $S^2 \times I$ that occurs as the intersection of
$S$ with the $i$th cocore and $R'\subset S^3 \setminus \bigsqcup_i (B_i\cup
\overline{B_i})$ the tangle obtained from $L'$ by cutting open along the belt
spheres of $\Wone'$. Denote by $2p'_i = |\del S_i| - 2p_i$ the number of outer
boundary point of $S_i$. Then the cobordism $\Sigma$ obtained from $S$ by
cutting along the annuli, which are the intersection of $Z$ with the cocores of
\oneh-handles in $\Wone'$, induces a cobordism map:
\[
\KhRN(R\cup \bigsqcup_i (T_i \sqcup \overline{T_i}),\k) \{(\textstyle\sum_i p_i)(N-1)\} 
\to 
\KhRN(R'\cup \bigsqcup_i ((S_i\cup T_i) \sqcup \overline{S_i\cup T_i}),\k)\{(\textstyle\sum_i p_i')(N-1)\} 
\]  
We claim that these components describe $\Psi_{Z;S}$ in terms of the colimit
formulas (left-hand sides) from Theorem~\ref{thm:mainonehandle}. To see this we
first observe that the unequal grading shifts guarantee that the components have
the same degree as $\Psi_{Z;S}$ (we have $\chi(\Sigma) =
\chi(S)+\sum_i(p_i+p'_i)$ and $\Sigma$ is glued to $\mathrm{cut}(F)$ along $p_i$
interval segments). Next we observe that after composing with the
projection-inclusion into the colimit formula for $\Sz(\Wone'; L', \k)$, the
resulting map no longer depends on the chosen location of cocores to cut.
Moreover, the subspace factored out in the colimit formula for $\Sz(\Wone; L, \k)$
is annihilated by the map thus defined. Thus the components described above
define a map $\Sz(\Wone; L, \k)\to \Sz(\Wone'; L', \k)$, and by construction this agrees
with $\Psi_{Z;S}$.

\subsection{Algebraic description of the 3-ball categories and their Hochschild homologies}
\label{subsec:threeballalg}
Recall the following definition, from e.g. \cite{MR3606445}.

\begin{definition}\label{def:HH}
    Let $K$ be a commutative ring and $\CC$ be a (small) $K$-linear
    category. Then the \emph{zeroth
    Hochschild homology} of $\CC$, also called the \emph{trace} of $\CC$, is defined as the $K$-module
    \[
    \HHz(\CC) := \Tr(\CC):= \left(\bigoplus_{x\in \mathrm{Ob}(\CC)} \End_\CC(x) \right) \bigg/ \mathrm{Span}\{f\circ g - g \circ f \}
    \]
    where the spanning set for the subspace to be divided out is constructed
    from all pairs of cyclically composable morphisms, i.e.  $f\in
    \Hom_\CC(x,y)$ and $g\in \Hom_\CC(y,x)$ for some $x,y\in \mathrm{Ob}(\CC)$.
\end{definition}

If $\CC$ as in Definition~\ref{def:HH} is not just enriched in $K$-modules, but
$M$-graded $K$-modules for some monoid $M$, then $\HHz(\CC)$ inherits the
structure of an $M$-graded $K$-module. The following is now an immediate consequence
of Theorem~\ref{thm:mainonehandle} and the Definitions~\ref{def:threeball} and
\ref{def:HH}.
\begin{corollary}\label{cor:HH}
    Let $\Wone=S^1\times B^3$ and consider the link $S^1\times P_p$ consisting of
    $2p$ parallel circles with balanced orientations (that is, with $p$
    circles oriented one way and $p$ the other way). Then, we have an
    isomorphism of bigraded $\k$-vector spaces:
    \begin{equation}
        \Sz(S^1\times B^3;S^1\times P_p,\k) \cong \HHz(\Sz(B^3;P_p,\k))
    \end{equation}
\end{corollary}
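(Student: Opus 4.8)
The plan is to obtain Corollary~\ref{cor:HH} by specializing Theorem~\ref{thm:mainonehandle} to the case $m = 1$, with $\Wone = S^1 \times B^3$ and $L = S^1 \times P_p$, and then matching the resulting quotient, term by term, with the presentation of $\HHz$ from Definition~\ref{def:HH}. First I would unwind the data of Theorem~\ref{thm:mainonehandle} in this situation. The link $L = S^1 \times P_p$ meets the single belt sphere $\{\mathrm{pt}\} \times S^2$ transversely in the $2p$ points $\{\mathrm{pt}\} \times P_p$, so $p_1 = p$. Cutting $S^1 \times B^3$ along the cocore $C_1 = \{\mathrm{pt}\} \times B^3$ produces $B^4$, and the cut-open tangle $R \subset S^3 \setminus (B_1 \cup \overline{B_1})$ is the product tangle $I \times P_p$ in the collar $I \times S^2$. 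Consequently, for any framed oriented tangle $T$ with $\del T = P_p$, gluing $T$ into $B_1$ and $\overline{T}$ into $\overline{B_1}$ across $R$ recovers, up to isotopy rel boundary, the doubled link $T \cup_{P_p} \overline{T}$ of Definition~\ref{def:threeball}, since gluing across the product tangle $R$ is the identity operation. Hence the indexing set of the direct sum in Theorem~\ref{thm:mainonehandle} is exactly $\mathrm{Ob}(\Sz(B^3; P_p, \k))$, and by Definition~\ref{def:threeball} each summand is
$$\KhRN\bigl(R \cup (T \sqcup \overline{T}), \k\bigr)\{p(N-1)\} \;\cong\; \KhRN(T \cup_{P_p} \overline{T}, \k)\{p(N-1)\} \;=\; \End_{\Sz(B^3; P_p, \k)}(T),$$
so that the source of the isomorphism in Theorem~\ref{thm:mainonehandle} is $\bigoplus_{T \in \mathrm{Ob}(\Sz(B^3; P_p, \k))} \End_{\Sz(B^3; P_p, \k)}(T)$, with the shift $\{p(N-1)\}$ being the one already built into the $\Hom$-spaces of $\Sz(B^3; P_p, \k)$ (up to the global grading shift noted in Theorem~\ref{thm:mainonehandle}).

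It then remains to match the coinvariant relation $\sim$ with the commutator relation of Definition~\ref{def:HH}. In the $m = 1$ specialization, the coinvariant span of Theorem~\ref{thm:mainonehandle} has vertex $\Hom_{\Sz(B^3; P_p, \k)}(T', T) \otimes \Hom_{\Sz(B^3; P_p, \k)}(T, T')$, and its two legs map to $\End(T)$ and to $\End(T')$. I would check, by tracing the definition of these legs as maps induced by gluing lasagna fillings of $4$-balls along tangles in $3$-balls, that for $g \in \Hom(T', T)$ and $f \in \Hom(T, T')$ the two legs send $g \otimes f$ to $g \circ f \in \End(T)$ and to $f \circ g \in \End(T')$, respectively. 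Then ``identifying the images of the actions'' amounts exactly to dividing $\bigoplus_T \End(T)$ by the span of all differences $g \circ f - f \circ g$ of cyclically composable morphisms, which is precisely $\Tr(\Sz(B^3; P_p, \k)) = \HHz(\Sz(B^3; P_p, \k))$ of Definition~\ref{def:HH}. Combining this with the identification of the source from the previous paragraph yields the claimed isomorphism of bigraded $\k$-vector spaces.

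The one point I expect to be the genuine obstacle — rather than routine unwinding — is the verification in the previous paragraph that the composition law of the category $\Sz(B^3; P_p, \k)$ (defined in Definition~\ref{def:threeball} via merging cobordisms, equivalently via gluing of lasagna fillings of balls) coincides on the nose with the action maps appearing in the coinvariant diagram of Theorem~\ref{thm:mainonehandle}. Both are defined by gluing lasagna fillings of $4$-balls along tangles in $3$-balls, so this is ultimately bookkeeping; but one must be careful about how the two balls $B_1$, $\overline{B_1}$ of the cutting picture get re-glued and that no orientation or framing mismatch is introduced. I would settle this by writing out the relevant local model of the gluing, exactly as in the functoriality discussion following Theorem~\ref{thm:mainonehandle}, and observing that it is literally the merging cobordism used to define composition in $\Sz(B^3; P_p, \k)$.
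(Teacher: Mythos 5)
Your proposal is correct and follows exactly the paper's route: the paper states Corollary~\ref{cor:HH} as an immediate consequence of Theorem~\ref{thm:mainonehandle} together with Definitions~\ref{def:threeball} and~\ref{def:HH}, and what you have written out is precisely the unwinding the paper leaves to the reader. The one point you flag as a potential obstacle — that the legs of the coinvariant diagram in Theorem~\ref{thm:mainonehandle} agree with composition in $\Sz(B^3;P_p,\k)$ — is already handled by Lemma~\ref{lemma:module}, which asserts that the gluing maps are compatible with composition in the $3$-ball category; citing that lemma would close the gap you anticipated.
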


We now recall some facts about the zeroth
Hochschild homology, which we will use to show that the \oneh-handle formula may
compute vector spaces which are not locally finite-dimensional.

\begin{fact}
\label{fact:trace1}
Any functor $F\colon \CC \to \DD$ of $K$-linear categories induces natural
$K$-module homomorphism $\HHz(F)\colon\HHz(\CC)\to \HHz(\DD)$ sending
$[f\colon x\to x] \mapsto [F(f)\colon F(x)\to F(x)]]$. This is
well-defined since $f\circ g - g\circ f \mapsto F(f)\circ F(g)-F(g)\circ F(f)$.
If $F$ is an equivalence, then $\HHz(F)$ is an isomorphism; see e.g.
\cite{MR3606445}.  
\end{fact}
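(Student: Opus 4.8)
The plan is to verify the three assertions in turn: well-definedness of $\HHz(F)$, its functoriality in $F$, and the fact that equivalences induce isomorphisms on $\HHz$.

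First I would check well-definedness. On each summand the assignment $f \mapsto F(f)$ is the $K$-linear map $\End_\CC(x) \to \End_\DD(F(x))$, and these assemble to a $K$-linear map $\bigoplus_{x \in \mathrm{Ob}(\CC)} \End_\CC(x) \to \bigoplus_{y \in \mathrm{Ob}(\DD)} \End_\DD(y)$. To see it descends to the quotient, take a spanning element $f \circ g - g \circ f$ of the subspace divided out, with $f \in \Hom_\CC(x,y)$ and $g \in \Hom_\CC(y,x)$, so that $f \circ g \in \End_\CC(y)$ and $g \circ f \in \End_\CC(x)$. Since $F$ preserves composition, this maps to $F(f) \circ F(g) - F(g) \circ F(f)$, which is again a spanning element for the cyclically composable pair $F(f) \in \Hom_\DD(F(x), F(y))$, $F(g) \in \Hom_\DD(F(y), F(x))$. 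Hence $\HHz(F)$ is well-defined; in the $M$-graded setting it is moreover degree-preserving whenever $F$ is, which is the case in all our applications. Functoriality, i.e. $\HHz(\Id_\CC) = \Id$ and $\HHz(G \circ F) = \HHz(G) \circ \HHz(F)$, is then immediate on representatives.

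The main point is the equivalence statement, and for it I would first isolate the following \emph{lemma}: if $\alpha \colon F_0 \Rightarrow F_1$ is a natural isomorphism of $K$-linear functors $F_0, F_1 \colon \CC \to \DD$, then $\HHz(F_0) = \HHz(F_1)$. To prove it, fix $x \in \mathrm{Ob}(\CC)$ and $f \in \End_\CC(x)$. Naturality gives $\alpha_x \circ F_0(f) = F_1(f) \circ \alpha_x$, whence $F_1(f) = \alpha_x \circ F_0(f) \circ \alpha_x^{-1}$. Set $a := \alpha_x \circ F_0(f) \in \Hom_\DD(F_0(x), F_1(x))$ and $b := \alpha_x^{-1} \in \Hom_\DD(F_1(x), F_0(x))$; then $a \circ b = F_1(f)$ and $b \circ a = F_0(f)$, and the trace relation in $\HHz(\DD)$ applied to this cyclically composable pair gives $[F_1(f)] = [a \circ b] = [b \circ a] = [F_0(f)]$. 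Since the classes $[f]$ generate $\HHz(\CC)$, this proves $\HHz(F_0) = \HHz(F_1)$.

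Finally, given an equivalence $F$, I would choose a quasi-inverse $G$ together with natural isomorphisms $G \circ F \cong \Id_\CC$ and $F \circ G \cong \Id_\DD$. Combining the lemma with functoriality yields $\HHz(G) \circ \HHz(F) = \HHz(G \circ F) = \HHz(\Id_\CC) = \Id_{\HHz(\CC)}$, and symmetrically $\HHz(F) \circ \HHz(G) = \Id_{\HHz(\DD)}$; hence $\HHz(F)$ is an isomorphism with inverse $\HHz(G)$. I do not expect a genuine obstacle here: the only points demanding care are keeping track of the sources and targets of $a$ and $b$ so that the trace relation applies verbatim, recalling that the quasi-inverse $G$ is produced using the axiom of choice, and noting that since all functors and natural transformations in our applications are grading-preserving, $\HHz(F)$ is in fact an isomorphism of bigraded $K$-modules.
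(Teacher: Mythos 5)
Your argument is correct and complete. The paper itself only sketches the well-definedness (via the observation that $F$ preserves commutators, exactly as in your first paragraph) and delegates the equivalence statement to the reference \cite{MR3606445}, so there is no explicit in-paper proof to compare against; but your route is the standard one and matches what that reference does. The two nontrivial ingredients — that naturally isomorphic $K$-linear functors induce the same map on $\HHz$, and that this combines with functoriality and a quasi-inverse to give an isomorphism — are exactly the right structure, and your proof of the key lemma is clean: the factorization $a = \alpha_x \circ F_0(f)$, $b = \alpha_x^{-1}$ gives a genuinely cyclically composable pair with $a\circ b = F_1(f)$ and $b\circ a = F_0(f)$, so the trace relation applies verbatim, with no implicit assumption that $F_0(x) = F_1(x)$. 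Your parenthetical remarks about grading-preservation and the use of choice in producing the quasi-inverse are both apt in the paper's $M$-graded, small-category setting.
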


\begin{fact}
\label{fact:trace2}
Let $F\colon \CC \to \CC^{\oplus}$ and $G\colon \CC \to \mathrm{Kar}(\CC)$
denote the canonical embeddings of $\CC$ into its additive and its idempotent
completion, respectively. Then $\HHz(F)$ and $\HHz(G)$ are isomorphisms; see
e.g. \cite[Sections 3.4 and 3.5]{MR3606445}.
\end{fact}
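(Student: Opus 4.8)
The plan is to deduce both isomorphisms from a single principle: $\HHz$ is a ``cyclic trace'', so it cannot detect the formal adjunction of direct sums or of images of idempotents. Concretely, for each of $F$ and $G$ I would exhibit an explicit retraction and check it is well defined using only the defining relation $[\varphi\circ\psi]=[\psi\circ\varphi]$ for cyclically composable morphisms; combined with a short surjectivity argument this forces $\HHz(F)$ and $\HHz(G)$ to be isomorphisms, and since all maps in sight preserve the grading the inverses are automatically graded as well.

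For the additive completion $F\colon\CC\to\CC^{\oplus}$, I would first note that an endomorphism $M$ of an object $x_1\oplus\dots\oplus x_k$ is a matrix $(M_{ij})$ with $M_{ij}\in\Hom_\CC(x_j,x_i)$, and that with $\iota_i,\pi_j$ the biproduct structure maps one has $M=\sum_{i,j}\iota_i M_{ij}\pi_j$. Cyclicity then gives $[\iota_i M_{ij}\pi_j]=[M_{ij}\pi_j\iota_i]=\delta_{ij}[M_{ii}]$ since $\pi_j\iota_i=\delta_{ij}\Id_{x_i}$, so $[M]=\sum_i[M_{ii}]$ in $\HHz(\CC^{\oplus})$. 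This simultaneously proves surjectivity of $\HHz(F)$ and suggests the candidate inverse $r\colon[M]\mapsto\sum_i[M_{ii}]$. The content is checking that $r$ descends to $\HHz(\CC^{\oplus})$, i.e.\ respects the relations $[NP]\sim[PN]$ for composable matrices $N,P$: one compares $\sum_k(NP)_{kk}=\sum_{j,k}N_{kj}P_{jk}$ with $\sum_j(PN)_{jj}=\sum_{j,k}P_{jk}N_{kj}$, and these agree in $\HHz(\CC)$ term by term by cyclicity in $\CC$. Then $r\circ\HHz(F)=\Id$ gives the isomorphism.

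For the idempotent completion $G\colon\CC\to\mathrm{Kar}(\CC)$, an endomorphism of $(x,e)$ is an $f\colon x\to x$ with $f=efe$ (so also $fe=f=ef$). I would factor $f$ through $(x,\Id_x)$ as $(x,e)\xrightarrow{\,f\,}(x,\Id_x)\xrightarrow{\,e\,}(x,e)$, which is legitimate since $fe=f$ and $e^2=e$, and apply cyclicity to get $[f\colon(x,e)\to(x,e)]=[f\circ e\colon(x,\Id_x)\to(x,\Id_x)]=[f\colon(x,\Id_x)\to(x,\Id_x)]$, proving surjectivity of $\HHz(G)$. In the other direction I would define $s\colon\HHz(\mathrm{Kar}(\CC))\to\HHz(\CC)$ by forgetting the idempotent, $[f\colon(x,e)\to(x,e)]\mapsto[f\colon x\to x]$; well-definedness again reduces to the observation that a commutator $gh-hg$ in $\mathrm{Kar}(\CC)$ maps to a commutator of cyclically composable morphisms in $\CC$, hence to zero in $\HHz(\CC)$. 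Since $s\circ\HHz(G)=\Id$, $\HHz(G)$ is an isomorphism.

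I do not expect a genuine obstacle here: the only thing to verify is that the two retractions respect the defining relations of $\HHz$, and in every case this collapses to the single cyclic identity $[\varphi\circ\psi]=[\psi\circ\varphi]$. The mild care needed is purely bookkeeping --- tracking which matrices or idempotents make a given composite a legal morphism of $\CC^{\oplus}$ or $\mathrm{Kar}(\CC)$ --- and alternatively one may simply cite \cite[Sections 3.4 and 3.5]{MR3606445}, where this is carried out in detail.
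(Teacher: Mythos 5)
The paper does not supply its own proof of this fact---it cites \cite[Sections 3.4 and 3.5]{MR3606445} and moves on---so there is nothing to compare against except that reference, and your argument is in fact the standard one carried out there. Your computation is correct: for $\CC^\oplus$, writing $M=\sum_{i,j}\iota_i M_{ij}\pi_j$ and using $\pi_j\iota_i=\delta_{ij}\Id$ together with cyclicity collapses $[M]$ to $\sum_i[M_{ii}]$, which gives both surjectivity and the candidate retraction $r$; for $\mathrm{Kar}(\CC)$, factoring $f\in\End(x,e)$ through $(x,\Id_x)$ as $(x,e)\xrightarrow{f}(x,\Id_x)\xrightarrow{e}(x,e)$ and using $ef=f=fe$ gives $[f\colon(x,e)\to(x,e)]=[f\colon(x,\Id_x)\to(x,\Id_x)]$, hence surjectivity, with the forgetful retraction $s$ supplying injectivity. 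Both $r$ and $s$ are checked to descend to $\HHz$ using only the relation $[\varphi\circ\psi]=[\psi\circ\varphi]$, and all maps visibly preserve the bigrading, which is what the paper's enriched setting requires. No gaps.
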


In a slight reformulation of the functoriality results from \cite{ETW}, the tangle
invariant underlying the $\gl_N$ link homology over $\k$ can be described as a
2-functor:
\[\KhRbracket{-}\colon \Tang\xrightarrow{} H^\bullet(\Foamdg)\]
We now briefly explain the relevant algebraic structures here. 
\begin{itemize}
    \item As in \cite[Definition 6.1]{MWW} one defines a category $\mathbf{TD}$
    of \emph{tangle diagrams}, whose objects are finite words in the alphabet
    $\{\uparrow, \downarrow\}$ (which encode possible sequences of oriented
    boundary points for tangles) and whose morphisms are finite words in
    generating morphisms $\{\mathrm{cup}_i, \mathrm{cap}_i, \mathrm{crossing}_i,
    \mathrm{crossing}_i^{-1}\}$ (where the index $i$ specifies the strands
    participating in the generator), that are admissible in the sense that the
    composite describes a tangle diagram. The composition is concatenation of
    words. For details see \cite[Definition 6.1]{MWW}.
    \item  $\Tang$ is a $2$-category whose objects and $1$-morphisms are as in
    $\mathbf{TD}$. The $2$-morphisms are the framed, oriented tangle cobordisms
    in $[0,1]^4$ between standard lifts of tangle diagrams to actual tangles in
    $[0,1]^3$, considered up to isotopy rel boundary. 
    \item $\Foam$ is a (monoidal) $2$-category, enriched at the level of
    $2$-morphism spaces in $\k$-vector spaces and equipped with grading shift
    functors on $1$-morphisms. It has the same objects\footnote{More generally,
    one can consider labelled oriented points as objects in $\Foam$, but we will
    not need labels other than $1$.} as $\Tang$. The $1$-morphisms are (formal
    direct sums of grading shifts of) $\glN$ webs embedded in $[0,1]^2$ and the
    $2$-morphisms are (matrices with entries given by) $\k$-linear combinations
    of $\glN$ foams embedded in $[0,1]^3$, modulo certain local relations. For
    details see \cite{ETW}. 
    \item $\Foamdg$ is the (monoidal) $2$-category that is obtained from $\Foam$
    by replacing its $\k$-linear $\Hom$-categories by the corresponding dg
    categories. This means it has the same objects, but the $1$-morphisms are
    now chain complexes formed from $1$-morphisms in $\Foam$, where the
    differentials are given by $2$-morphisms in $\Foam$. The $2$-morphisms
    spaces are chain complexes of homologically homogeneous and quantum
    grading-preserving maps, spanned by $2$-morphisms from $\Foam$ (not
    necessarily chain maps). The differential on $2$-morphisms is the usual
    supercommutator with respect to the differential on the source- and target
    complexes. With respect to this differential the zero cycles are exactly the
    classical chain maps. There is also an enriched $2$-hom in $\Foamdg$, which
    is assembled from $2$-homs between objects shifted in quantum grading.
    \item $H^\bullet(\Foamdg)$ is the cohomology category of $\Foamdg$. It has
    the same objects and $1$-morphisms, but the $2$-morphism spaces are now
    graded $\k$-modules obtained by taking cohomology. The zeroth cohomology
    $H^0(\Foamdg)$ is also called the homotopy category; its $2$-morphisms are
    chain maps up to homotopy. 
    
    In the following we will also consider enriched $2$-homs.
    For objects $s,t$ and $1$-morphisms $A,B\colon s \to t$ we define the bigraded $\k$-modules:
    \begin{equation}
        \label{eq:enrichedhom}
        H^\bullet(\Foamdg)^*(A,B):=\bigoplus_{k\in \Z} \Hom_{H^\bullet(\Foamdg)}(A\{k\},B)
    \end{equation}
  Here one grading, the quantum grading, is
    given by the displayed direct sum, while the other grading, the homological
    grading, is already internal to $H^\bullet(\Foamdg)$. Using the grading
    shift automorphisms, these enriched $2$-homs admit composition maps and thus
    assemble into a bigraded $\k$-linear \emph{enriched morphism category}
    $H^\bullet(\Foamdg)^*(s,t)$ whose objects are the $1$-morphisms from $s$ to
    $t$.

    \item The functor $\KhRbracket{-}$ is the identity on objects. On
    $1$-morphisms it sends a tangle diagram to a chain complex of webs and foams
    in the way that is usual for $\glN$ link homology, and $2$-morphisms, i.e.\
    isotopy classes of tangle cobordisms are sent to the corresponding homotopy
    classes of chain maps as specified in the functoriality proof in \cite{ETW}. 
\end{itemize}

We recall from \cite[Section 6]{MWW} that the tangle invariant corresponding to
the $\glN$ link homology can be organized into a braided monoidal $2$-category.
Here we give a similar construction of this category $\Ta_N$ (which was denoted
$\mathbf{KhR}_N$ in\cite{MWW}) by replacing the top morphism layer of $\Tang$: 
\begin{itemize}
    \item objects are sequences of tangle endpoints, as in $\mathbf{TD}$ and $\Tang$,
    \item 1-morphisms consist of Morse data for tangles, as in $\mathbf{TD}$ and $\Tang$,
    \item 2-morphisms between tangles $S$ and $T$ with equal source and target
    objects are the bigraded $\k$-modules computed as the enriched 2-hom
    $H^\bullet(\Foamdg)^*(\KhRbracket{S},\KhRbracket{T})$ from
    \eqref{eq:enrichedhom} between the $\glN$ chain complexes of the tangles.
    \end{itemize}

As an important special case, one gets for a framed, oriented link $L$:
\[ \Hom_{\Ta_N}(\emptyset,L) \cong \KhRN(L).\] Moreover, if $T$ and $S$ are
framed, oriented tangles with endpoints identified, so that we can form the link
$T\cup \overline{S}$, then we set $2p=|\del S|=|\del T|$ and have:
\begin{align*} 
    \Hom_{\Ta_N}(S,T) 
    &\cong
 \Hom_{\Ta_N}(\emptyset,T\cup \overline{S}) \{p(N-1)\}
\cong \KhRN(T\cup \overline{S})\{p(N-1)\}
\end{align*}

Given a $3$-ball $B^3$ with a set $P_p$ of $2p$ framed, co-oriented points in the
boundary, together with a suitable identification of $(B^3,P_p)$ with $([0,1]^3,
s\cup t)$, we associate to it the morphism category $\Ta_N(s,t)$, whose objects
are tangles from $s$ to $t$. By construction, $\Ta_N(s,t)$ is equivalent to
$\Sz(B^3;P_p)$ from Definition~\ref{def:threeball}. Moreover, $\Ta_N(s,t)$ can be
considered as a full subcategory of the bigraded enriched morphism category
$H^\bullet(\Foamdg)^*(s,t)$.

\begin{remark}\label{rem:arcrings} For $N=2$ the foam $2$-category $\Foamtwo$ can be replaced by the
$2$-category (or canopolis) of Bar-Natan's dotted cobordisms \cite[Section
11.2]{MR2174270}; see \cite{2019arXiv190312194B}. The morphism categories of the
latter can also be described as categories of finitely-generated graded
projective modules for Khovanov's arc rings \cite{MR1928174}. 

\end{remark}

\subsection{The 3-ball category with two points}
Here we consider the categories from Section~\ref{subsec:threeballalg} in the
special case when the source and target objects consist of a single point
$s=t=\{*\}$. In this case, the corresponding morphism category in $\Foamdg$ is
known to be equivalent to the dg category of complexes of free graded
$R_N:=\k[X]/( X^N)$-modules; see e.g. \cite[Lemma 3.35]{MR3982970} for an
argument in an equivalent setting. We record this equivalence and its
consequence on the level of homology:
\[
    \Hom_{\Foamdg}(*,*) \simeq \Chdg(R_N\modgrfr), \quad
    \Hom_{H^\bullet(\Foamdg)}(*,*) \simeq H^\bullet(\Chdg(R_N\modgrfr))
\]
Here $R_N\modgrfr$ refers to the category of finitely-generated graded free
$R_N$-modules and $\Chdg(\CC)$ refers to the dg category of bounded chain
complexes over an additive category $\CC$. Again we will use a superscript $*$
to refer to the corresponding enriched morphism spaces, computed via the
ordinary morphism spaces between shifts of objects as in \eqref{eq:enrichedhom}.

Now we specialize to $N=2$ and classify the indecomposable objects. Setting
$R:=R_2=\k[X]/(X^2)$, the isomorphism classes of indecomposable objects (up to
shifts in quantum and homological degrees) in $H^\bullet(\Chdg(R\modgrfr))$ are
of the form: 
    \[C_k:= \underline{R} \xrightarrow{X}
    R\{-2\}\xrightarrow{X} \cdots  \xrightarrow{X} R\{-2k\}\]
     for $k\geq 0$; see \cite[Section 3]{KhovanovPatterns}. 

     Next we compute the zeroth Hochschild homology of
     $H^\bullet(\Chdg(R\modgrfr))$. In principle, there are two possible
     versions: using the ordinary or the
     enriched hom; see \cite[Section 2.4]{MR3653092}. In the case of the ordinary hom, we would obtain a
     $\Z$-graded (namely homologically graded) $\k[q^{\pm 1}]$-module, where $q$
     records the action of the auto-equivalence provided by the shift in quantum
     grading. We will, however, use the enriched hom (indicated by the
     superscript $*$) to consider the morphism spaces as bigraded. In doing so,
     one obtains \emph{translation} isomorphisms, which identify an object with
     all its gradings shifts. More specifically, between an object and its
     shift, the identity now represents an isomorphism of degree specified by
     the shift. The zeroth Hochschild homology of the resulting category carries
     the structure of a bigraded $\k$-vector space, since the endomorphism $q$
     now acts as the identity. 

\begin{proposition}\label{prop:HH-cxpolyring} The bigraded zeroth Hochschild
    homology of $H^\bullet(\Chdg(R\modgrfr))^*$ has a basis given by the trace
    classes $[\Id_{C_l}]$ and $[RX_{C_l}]$ for all $l\geq 0$. The identity
    morphisms on the complexes $C_l$ for $l\geq 0$ are self-explanatory and
    their trace classes have bidegree $(0,0)$. The endomorphism $RX_{C_l}$ is a
    special case $RX_{C_l} = RX^{(l)}_{C_l}$ of a larger family of endomorphisms
    $RX^{(l)}_{C_k}$ for $0\leq l\leq k$ of the following form:
    \begin{equation*}
        \begin{tikzcd}
            \underline{R} \arrow[r, "X"]&    
            \cdots  \arrow[r, "X"] & 
            R\{-2l\}\arrow[r, "X"] &
            \cdots  \arrow[r, "X"] & 
            R\{-2k\}\arrow[r, "0"]\arrow[d, "X"]& 
            \cdots  \arrow[r, "0"] & 
            0\\
            0 \arrow[r, "0"]&    
            \cdots  \arrow[r, "0"]& 
            R\{-2l-2\}\arrow[r, "X"]&
            \cdots  \arrow[r, "X"]& 
            R\{-2k-2\}\arrow[r, "X"]& 
            \cdots  \arrow[r, "X"]& 
            R\{-2k-2l-2\}
        \end{tikzcd}
        \end{equation*}
        where the only non-zero component is at $R\{-2k\}$ (which may coincide
        with $R\{-2l\}$ if $k=l$). The trace class of the morphism
        $RX^{(l)}_{C_k}$ has bidegree $(l,2l+2)$. ($RX$ stands for shift
        \emph{right} and apply $X$.) 
\end{proposition}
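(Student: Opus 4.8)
The plan is to first replace the whole category by the full subcategory on its indecomposable objects, then to compute the bigraded endomorphism algebras, and finally to understand the relations imposed on them by cyclic composability.

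\emph{Step 1: Reduction to the $C_k$.} The category $\mathcal{E}:=H^\bullet(\Chdg(R\modgrfr))^*$ is Krull--Schmidt: it is the homotopy category of bounded complexes over the Krull--Schmidt category $R\modgrfr$, with the quantum and homological shift autoequivalences absorbed into the enriched $\Hom$-spaces, hence invisible on objects. By \cite[Section 3]{KhovanovPatterns} every object is, up to those shifts, a finite direct sum of copies of the complexes $C_k$, $k\geq 0$. Since $\HHz$ is unchanged along equivalences and under passage to additive and idempotent completions (Facts~\ref{fact:trace1} and~\ref{fact:trace2}), we get $\HHz(\mathcal{E})\cong \HHz(\mathcal{E}_0)$ where $\mathcal{E}_0\subseteq\mathcal{E}$ is the full subcategory on $\{C_k:k\geq 0\}$, and
\[
\HHz(\mathcal{E}_0)=\Bigl(\bigoplus_{k\geq 0}\End_{\mathcal{E}}(C_k)\Bigr)\Big/\bigl\langle f\circ g-g\circ f\ :\ f\colon C_i\to C_j,\ g\colon C_j\to C_i\bigr\rangle ,
\]
all $\Hom$-spaces being bigraded.

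\emph{Step 2: The bigraded $\Hom$-spaces.} I would compute $\Hom_{\mathcal{E}}(C_i,C_j)$ inductively from the short exact sequences of complexes $0\to C_{k-1}\{-2\}[-1]\to C_k\to \underline{R}\to 0$, which give distinguished triangles $C_{k-1}\to C_k\to C_0\to C_{k-1}[1]$ and hence long exact sequences for $\Hom_{\mathcal{E}}(C_j,-)$ and $\Hom_{\mathcal{E}}(-,C_k)$; the base cases $\Hom_{\mathcal{E}}(C_0,C_k)$ and $\Hom_{\mathcal{E}}(C_j,C_0)$ are each two-dimensional and read off directly from the complexes. (Alternatively, Koszul duality identifies $\mathcal{E}$ with the enriched derived category of finite-length graded $\k[Y]$-modules, with $C_k\mapsto \k[Y]/(Y^{k+1})$, so everything follows from $\Ext^*_{\k[Y]}$.) The outcome is that $\End_{\mathcal{E}}(C_k)\cong \k[Y]/(Y^{k+1})\otimes_\k\Lambda[\theta_k]$ as a bigraded commutative algebra, with $\deg Y=(-1,-2)$, $\deg\theta_k=(k,2k+2)$, $\Id_{C_k}=1$, and $RX^{(l)}_{C_k}=Y^{\,k-l}\theta_k$ of bidegree $(l,2l+2)$ for $0\leq l\leq k$; in particular $[\End_{\mathcal{E}}(C_k),\End_{\mathcal{E}}(C_k)]=0$, so only the off-diagonal commutators matter. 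The induction also furnishes, for $k\geq 1$, morphisms $\iota_k\colon C_{k-1}\to C_k$ and $p_k\colon C_k\to C_{k-1}$ with $\iota_k\circ p_k=Y\cdot\Id_{C_k}$ and $p_k\circ\iota_k=Y\cdot\Id_{C_{k-1}}$ (the Koszul duals of $\k[Y]/(Y^k)\hookrightarrow\k[Y]/(Y^{k+1})$ and its quotient).

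\emph{Step 3: The trace quotient.} Feeding the factorization $Y^l\Id_{C_k}=\iota_k\circ(Y^{l-1}\Id_{C_{k-1}})\circ p_k$ into cyclic invariance yields $[Y^l\Id_{C_k}]=[Y^l\Id_{C_{k-1}}]$ for $l\geq1$, $k\geq l$; iterating down and using $Y\cdot\Id_{C_0}=0$ shows $[Y^l\Id_{C_k}]=0$ for $l\geq1$, so among the ``$Y$-power'' classes only the identities $[\Id_{C_k}]$ (bidegree $(0,0)$) survive. Precomposing the same factorization with $\theta_k$ and cycling gives, for $k>l$, a relation $[Y^{\,k-l}\theta_k]=c_k\,[Y^{\,k-1-l}\theta_{k-1}]$ with $c_k\in\k$; homogeneity forces both sides into bidegree $(l,2l+2)$, and an explicit check that $p_k\circ\theta_k\circ\iota_k$ is a nonzero unit multiple of $Y\theta_{k-1}$ shows the $c_k$ are units, so iterating identifies $[RX^{(l)}_{C_k}]=[RX^{(l)}_{C_l}]=[RX_{C_l}]$ for all $k\geq l$. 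Analogous cyclic relations kill the remaining, negative-homological-degree generators $Y^a$ ($1\leq a\leq k$) of $\End_{\mathcal{E}}(C_k)$. One is left with the classes $[\Id_{C_l}]$ ($l\geq0$, bidegree $(0,0)$) and $[RX_{C_l}]$ ($l\geq0$, bidegree $(l,2l+2)$); their linear independence follows because the presentation obtained above has precisely these as a basis—no further relation can identify classes attached to distinct $C_l$ since for $i\neq j$ no morphism $C_i\to C_j$ is invertible. The main obstacle is exactly this Step~3 bookkeeping: distinguishing cyclic relations that merely transport a trace class from one $\End_{\mathcal{E}}(C_k)$ to another (which is why the index $l$, not $k$, governs $RX^{(l)}_{C_k}$) from those that genuinely annihilate a class, all while carrying along the nontrivial bidegrees of the auxiliary morphisms $\iota_k$, $p_k$, $\theta_k$, and confirming the survivors are neither secretly identified with each other nor with $0$.
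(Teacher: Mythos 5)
Your proof takes the same overall route as the paper's — reduce to the full subcategory on the indecomposables $C_k$ via Krull--Schmidt and Facts~\ref{fact:trace1}, \ref{fact:trace2}, then analyze the $\Hom$-spaces and determine which trace classes survive — but you flesh it out with more algebraic structure. Where the paper carries out a direct case analysis of chain maps with quantum-degree-$0$ and quantum-degree-$2$ components (and dispatches the trace identification $[RX^{(l)}_{C_k}]=[\pm RX^{(l)}_{C_l}]$ with a ``one easily checks''), you compute the full bigraded algebra $\End_{\mathcal{E}}(C_k)\cong \k[Y]/(Y^{k+1})\otimes\Lambda[\theta_k]$, introduce the adjunction morphisms $\iota_k, p_k$ with $\iota_k p_k=Y\cdot\Id_{C_k}$, $p_k\iota_k=Y\cdot\Id_{C_{k-1}}$, and derive the trace relations explicitly from the cyclic identity $[fg]=[gf]$. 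Either the inductive long exact sequence argument or the Koszul duality with $\k[Y]$ you mention gives this; the upshot is a more transparent mechanism for the ``bookkeeping'' the paper elides, and it makes the final linear independence argument via $\Ext$ degrees and local endomorphism rings explicit.

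One small slip: you write that $p_k\circ\theta_k\circ\iota_k$ is a nonzero unit multiple of $Y\theta_{k-1}$, but a degree count shows this cannot be right — $p_k\theta_k\iota_k$ lives in bidegree $(0,0)+(k,2k+2)+(-1,-2)=(k-1,2k)$, which is the bidegree of $\theta_{k-1}$, not of $Y\theta_{k-1}$ (which sits in $(k-2,2k-2)$). What you actually need (and what a computation with projective resolutions over $\k[Y]$ confirms) is $p_k\theta_k\iota_k = c_k\,\theta_{k-1}$ for a unit $c_k$; feeding this into the cyclic identity indeed gives $[Y^{k-l}\theta_k]=c_k[Y^{k-l-1}\theta_{k-1}]$ as you claim. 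So the conclusion of Step~3 is correct; only the intermediate statement has the spurious factor of $Y$.
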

\begin{proof}
We abbreviate $\CC':=H^\bullet(\Chdg(R\modgrfr))^*$. Let $\CC$ denote the full
subcategory generated by the indecomposable objects $C_k$. By
Fact~\ref{fact:trace1} and the discussion of the beginning of the section, it
suffices to compute the bigraded zeroth Hochschild homology of $\CC$. To this
end, we study closed homogeneous endomorphisms of the objects $C_k$ and trace
relations between them. 

We note that the components of a chain map between shifts of such objects can
have quantum degree zero or two (a scalar multiple of $\Id_R$ or $X_R$). Since
the differential in every complex is of quantum degree two, this means that
closed morphisms with components of quantum degree zero are homotopic if and
only if they are equal. 

First we investigate the chain maps between shifts of objects $C_l$ with
components of quantum degree zero. For positive homological shifts (right shift)
there are simply no closed morphisms, i.e.\ no chain maps. In shift zero we have
the identity on every $C_l$ (which does not factor through any $C_m$ with $m\neq
l$) and for negative homological shifts we have closed maps that factor into a
composite of closed maps through a shift of a $C_m$ with $m<l$ (by induction,
one can show that their trace classes actually vanish). Thus in bidegree $(0,0)$
we have a basis of trace classes $[\Id_{C_l}]$ for $l\geq 0$. 

Second we are interested in chain maps between shifts of objects $C_l$ with
components of quantum degree two. In negative homological shifts (left shift)
all such maps are nullhomotopic. In non-negative homological shift, every such
map is homotopic to a scalar multiple of $RX^{(l)}_{C_k}$. However, one easily
checks that the trace class of $RX^{(l)}_{C_k}$ equals the trace class of $\pm
RX^{(l)}_{C_l}$. Since these have bidegree $(l,2l+2)$ in the enriched $\End$ of
$C_l$, we see that they are linearly independent. 
\end{proof}

Note that the bigraded zeroth Hochschild homology of
$H^\bullet(\Chdg(R\modgrfr))^*$ is \emph{not} locally finite-dimensional! It is of
countable dimension in bidegree $(0,0)$ with a basis given by $[\Id_{C_l}]$ for
$l\geq 0$. Nevertheless, we have:

\begin{proposition}\label{prop:twopoint} The bigraded vector spaces
   \[ \Sztwo(S^1\times B^3;S^1\times P_1,\k) \cong \HHz(\Sztwo(B^3;P_1,\k))
   \cong
   \HHz(\Ta_2(*,*)) \] 
   are four-dimensional, and in particular, locally finite-dimensional. 
\end{proposition}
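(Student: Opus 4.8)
The plan is to reduce everything to a finite, explicit computation of the zeroth Hochschild homology of a small subcategory of $H^\bullet(\Chdg(R\modgrfr))^*$, where $R:=\k[X]/(X^2)$, and then to identify that subcategory precisely. The two displayed isomorphisms are formal: the isomorphism $\Sztwo(S^1\times B^3;S^1\times P_1,\k)\cong\HHz(\Sztwo(B^3;P_1,\k))$ is exactly Corollary~\ref{cor:HH}, and since $\Ta_2(*,*)$ is equivalent to $\Sztwo(B^3;P_1,\k)$ (as recorded at the end of Section~\ref{subsec:threeballalg}), Fact~\ref{fact:trace1} gives $\HHz(\Sztwo(B^3;P_1,\k))\cong\HHz(\Ta_2(*,*))$. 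So the task is to show $\HHz(\Ta_2(*,*))$ is four-dimensional over $\k$.

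Next I would use that $\Ta_2(*,*)$ is a full subcategory of the bigraded enriched morphism category $H^\bullet(\Foamtwodg)^*(*,*)\simeq H^\bullet(\Chdg(R\modgrfr))^*$. By Facts~\ref{fact:trace1} and~\ref{fact:trace2}, $\HHz$ is unchanged under passage to the additive and idempotent completion, so it suffices to identify that completion inside $H^\bullet(\Chdg(R\modgrfr))^*$. The claim is that it is the full subcategory $\DD$ generated by $C_0$ and $C_1$ (together with grading shifts, direct sums and summands). For this one checks: (i) the trivial $(1,1)$-tangle has $\KhRbracket{-}$ equal to $C_0=\underline{R}$; (ii) an explicit small tangle — for instance the trivial arc together with a small circle clasping it — has $\KhRbracket{-}$ homotopy equivalent, after delooping and Gaussian elimination in $\Foamtwo$, to a grading shift of $C_1$ together with a shift of $C_0$; and (iii) no $(1,1)$-tangle produces any $C_k$ with $k\geq 2$ as a summand. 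Part (iii) is the main obstacle: every $(1,1)$-tangle is the unknotted arc together with a framed link possibly linked with it, and one must show — by reducing the link to a standard clasp normal form and inducting on the number of clasps, or by a direct analysis of the $\gl_2$ complexes of such tangles together with the classification of indecomposables in $H^\bullet(\Chdg(R\modgrfr))^*$ from \cite{KhovanovPatterns} — that the resulting complexes only ever decompose into shifts of $C_0$ and $C_1$.

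Finally I would compute $\HHz(\DD)$ directly, by the method of the proof of Proposition~\ref{prop:HH-cxpolyring}. By the standard reduction for additive categories, $\HHz(\DD)\cong\End^*_\DD(C_0\oplus C_1)/[-,-]$, so it is enough to analyze closed homogeneous morphisms among $C_0$ and $C_1$ and the cyclic trace relations among them. One finds that $[\Id_{C_0}]$ and $[\Id_{C_1}]$ are linearly independent classes in bidegree $(0,0)$; that the quantum-degree-$2$ endomorphism $X$ of $C_0$, the endomorphisms $(X,0)$ and $(0,X)$ of $C_1$, and the off-diagonal morphisms $C_0\leftrightarrow C_1$ all have the same trace class, a single class in bidegree $(0,2)$, since each is cyclically equivalent to $X\colon C_0\to C_0$; and that the class $[RX^{(1)}_{C_1}]$ of bidegree $(1,4)$ survives, being neither an internal commutator in $\End^*(C_1)$ nor, since $C_2,C_3,\dots$ are now absent, identified with any $[RX^{(l)}_{C_l}]$ for $l\geq 2$. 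As in Proposition~\ref{prop:HH-cxpolyring}, the negative-homological-degree parts of the endomorphism algebras contribute vanishing trace classes, so no further bidegrees appear. Thus $\HHz(\DD)$ has basis $\{[\Id_{C_0}],[\Id_{C_1}],[X_{C_0}],[RX^{(1)}_{C_1}]\}$, is four-dimensional, and is in particular locally finite-dimensional. I expect the bookkeeping in step~(iii) — ruling out all the complexes $C_k$ with $k\geq 2$ — to be the hard part; the rest is either formal or a short linear-algebra verification modeled on Proposition~\ref{prop:HH-cxpolyring}.
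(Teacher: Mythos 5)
Your reduction steps are exactly those of the paper: the first isomorphism is Corollary~\ref{cor:HH}, the second is Fact~\ref{fact:trace1}, and the computation is then pushed into $H^\bullet(\Chdg(R\modgrfr))^*$ via the full embedding of $\Ta_2(*,*)$, with Fact~\ref{fact:trace2} allowing you to pass to additive/Karoubi completions and reduce to the full subcategory on $C_0$ and $C_1$. Your hands-on description of the resulting $\HHz$ (identifying the $(0,2)$-classes $[X_{C_0}]$, $[(X,0)_{C_1}]$, $[(0,X)_{C_1}]$ via the cyclic trace relation $f\circ g - g\circ f$ with $f\colon C_0\to C_1$ and $g\colon C_1\to C_0$, and leaving $[RX^{(1)}_{C_1}]$ in bidegree $(1,4)$) agrees with what the paper obtains by ``the same arguments as in Proposition~\ref{prop:HH-cxpolyring}.''

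The genuine gap is precisely where you flag it: step (iii), showing that no $C_k$ with $k\geq 2$ occurs as a summand of a $(1,1)$-tangle complex. Your two suggested routes --- reducing an arbitrary $(1,1)$-tangle to a ``clasp normal form'' with induction on clasps, or a ``direct analysis'' using the classification of indecomposables over $\k[X]/(X^2)$ --- are not arguments but rather restatements of what needs to be proved, and I do not see how to make either one go: an induction on clasps would have to control how $C_k$-summands behave under tensoring with the complex of a crossing (a genuinely nontrivial interaction), and the classification of indecomposables over $\k[X]/(X^2)$ by itself puts no constraint at all on which indecomposables appear in tangle complexes. The paper's argument is different and is what makes this feasible: lift to the equivariant theory over $\k[X,\alpha]/(X^2-\alpha)\cong\k[X]=:R'$, where the indecomposables are $C^0 = \underline{R'}$ and $C^k = (\underline{R'}\xrightarrow{X^k}R'\{-2k\})$; a $(1,1)$-tangle complex over $R'$ splits into shifts of these, and base-change along $R'\to R = R'/(X^2)$ sends $C^0\mapsto C_0$, $C^1\mapsto C_1$, and $C^k$ for $k\geq2$ to a direct sum of two shifted copies of $C_0$ (since $X^k=0$ in $R$). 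This ``all Khovanov complexes come from $\k[X]$'' observation is the key idea your proposal is missing, and without it the proof is incomplete.
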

\begin{proof}
    We have already explained the two isomorphisms. We now need to understand
    the essential image of $\Ta_2(*,*)$ under the full embedding into
    $H^\bullet(\Chdg(R\modgrfr))^*$. We claim that the invariant of any
    $(1,1)$-tangle decomposes into (shifts of) the indecomposable summands $C_0$
    and $C_1$, but never $C_l$ for $l\geq 2$. Provided this claim holds, we can compute
    $\HHz(\Ta_2(*,*))$ as the Hochschild homology of the full additive
    subcategory of $H^\bullet(\Chdg(R\modgrfr))^*$ generated by $C_0$ and $C_1$,
    and this again is isomorphic to the Hochschild homology of the full
    subcategory on the two objects $C_0$ and $C_1$. Here we use that the zeroth
    Hochschild homology is preserved under proceeding to the additive and
    idempotent completion; see Fact~\ref{fact:trace2}. Following the same
    arguments as in Proposition~\ref{prop:HH-cxpolyring}, we see that it is
    $4$-dimensional, spanned by $[\Id_{C_l}]$ and $[RX_{C_l}]$ for $l\in
    \{0,1\}$
    
    The key idea to prove the claim is that all complexes appearing in Khovanov
    homology come from complexes over $\k[X]$ by setting $X^2=0$ (though
    certainly not all complexes over $\k[X]/(X^2)$ have this property).  
    Indeed, one can use equivariant Khovanov homology, defined over the
    ring $\k[X,\alpha]/(X^2-\alpha) \cong \k[X]=:R'$ to simplify the complex of a
    $(1,1)$-tangle into a complex of graded free $\k[X]$-modules. These
    decompose, up to homotopy equivalence and shift, into chain complexes of the form 
    \[ C^0:=0 \xrightarrow{0} \underline{R'} \xrightarrow{0} 0, \quad \text{and}\quad 
    C^k:=\quad 0 \xrightarrow{0} \underline{R'} \xrightarrow{X^k} R'\{-2k\}
    \xrightarrow{0} 0 \quad \text{ for } k\geq 1\] Upon reducing to the ordinary
    Khovanov theory by tensoring with $\k[X]/(X^2)$ over $\k[X]$, these
    complexes decompose into (shifts of) copies of $C_0$ and $C_1$. 
\end{proof}

\begin{remark} A strong version of the so-called knight move conjecture posited
that the complex of any long knot decomposes (up to homotopy equivalence) into
one shifted copy of $C_0$ and some number of copies of $C_1$; see
\cite[Conjecture 1]{KhovanovPatterns}. The argument in the previous proof shows
that this can fail only due to the presence of \emph{more than one} shifted copy of $C_0$.
Three copies of $C_0$ can be detected in the counterexample to the knight move
conjecture found by Manolescu--Marengon \cite{MR4042864}.
\end{remark}

\begin{remark} One can also consider analogs of the skein modules $\Sz$ based on
equivariant or deformed versions of $\glN$ homology. For example, in one common
choice for $N=2$ one works over $R'=\k[X,\alpha]/(X^2=\alpha)$. We can also try
to compute the bigraded zeroth Hochschild homology of the 3-ball category with
two points and of its ambient category $H^\bullet(\Chdg(R'\modgrfr))^*$ in this setting. We have already
listed the indecomposable of the latter above: the chain complexes $C^k$. For $k\geq 1$ the
enriched isomorphism algebra of the complex $C^k$ is isomorphic to $R'[\eta]/(
X^k=0 )$ where $\eta$ is of bidegree $(1,2k)$. The trace classes of $\eta$ and
its multiples are zero. Moreover, the trace class of $X^x$ is zero for every
$x>0$. This leaves the trace classes of the identities of $C^k$ for $k\geq 0$
and the trace class of $X_{C^0}$ as linearly independent --- the zeroth
Hochschild homology is not locally finite-dimensional. However, it is currently
not known which $C^k$ appear in complexes of $(1,1)$-tangles. A copy of $C^3$
appears in \cite{MR4042864}.
\end{remark}

\subsection{The 3-ball category with four or more points}
We claim that the $3$-ball categories with $2p\geq 4$ points have zeroth
Hochschild homologies that are no longer locally finite-dimensional. Again we
restrict to the case of $N=2$ and work over a perfect field $\k$. Our strategy
is to give a lower bound for the dimension of the zeroth Hochschild homology in
terms of the split Grothendieck group. We briefly recall the relevant notions
and results.

\begin{definition}
\label{def:splitG}
 Let $\CC$ be an additive category. The \emph{split Grothendieck group} of $\CC$
 is defined as:
    \[
    K_0(\CC):=     \frac{\mathrm{Span}_\Z\{\text{isomorphism classes } [x] \text{ of objects in } \CC \}}{([x \oplus y] = [x] + [y] \mid x,y \in \mathrm{Ob}(\CC))}
    \]
   \end{definition}
  
  \begin{definition}
\label{def:KS}
    A $K$-linear additive category $\CC$ is called \emph{Krull--Schmidt}
    if every object decomposes uniquely into a finite direct sum of indecomposable
    objects with local endomorphism rings. 
    \end{definition}
   The following is clear from the definition:
   \begin{proposition}
   \label{prop:KS}
    For a Krull-Schmidt category, the split Grothendieck group is a free abelian
    group on the isomorphism classes of indecomposable objects in $\CC$.
    \end{proposition}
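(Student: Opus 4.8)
The plan is to recognize $K_0(\CC)$ as the group completion (Grothendieck group) of the commutative monoid $M$ whose underlying set is the collection of isomorphism classes of objects of $\CC$ and whose addition is induced by $\oplus$ (with unit the class of the zero object). Unwinding Definition~\ref{def:splitG}, the free abelian group on isomorphism classes of objects modulo the relations $[x\oplus y]=[x]+[y]$ is, by construction, exactly this group completion. So the whole statement reduces to showing that $M$ is the \emph{free} commutative monoid on the set $\mathcal{I}$ of isomorphism classes of indecomposable objects, together with the standard fact that group completion sends the free commutative monoid $\N^{(\mathcal{I})}$ to the free abelian group $\Z^{(\mathcal{I})}$ (group completion is left adjoint to the forgetful functor from abelian groups to commutative monoids, hence commutes with free constructions).

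First I would observe that the Krull--Schmidt hypothesis of Definition~\ref{def:KS} is precisely what is needed to identify $M$ with $\N^{(\mathcal{I})}$. There is an evident monoid homomorphism $\N^{(\mathcal{I})}\to M$ sending a finitely supported function $(n_{[X]})_{[X]\in\mathcal{I}}$ to the class $\bigl[\bigoplus_{[X]} X^{\oplus n_{[X]}}\bigr]$, where $X$ denotes a representative of the class $[X]$. The existence part of the Krull--Schmidt condition says this map is surjective, and the uniqueness part (the multiset of indecomposable summands, with their isomorphism types, is an invariant of the object) says it is injective. Hence $M\cong\N^{(\mathcal{I})}$ as monoids, and applying group completion gives $K_0(\CC)\cong\Z^{(\mathcal{I})}$, which is the assertion. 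One should tacitly assume $\CC$ is essentially small so that $\mathcal{I}$ and $M$ are genuine sets, as is already implicit in Definition~\ref{def:splitG}.

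There is essentially no obstacle here: the only delicate input is the uniqueness of Krull--Schmidt decompositions, and this is built into the definition used in the excerpt, so no exchange-lemma argument needs to be reproduced. The remaining ingredient --- that group completion carries free commutative monoids to free abelian groups --- is elementary and purely algebraic. Thus the proof is a short formal argument once these two facts are assembled, which is consistent with the statement being flagged as ``clear from the definition''.
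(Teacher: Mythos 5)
Your proposal is correct and is exactly the standard argument the paper has in mind; the paper itself gives no proof, merely remarking that the statement is ``clear from the definition'' (of Krull--Schmidt), and your writeup simply spells this out: $K_0(\CC)$ is by construction the group completion of the monoid of isomorphism classes under $\oplus$, the Krull--Schmidt hypothesis identifies that monoid with the free commutative monoid on indecomposables (existence of decompositions gives surjectivity, uniqueness gives injectivity), and group completion turns a free commutative monoid into the free abelian group on the same basis.
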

    
    \begin{definition} For a $K$-linear additive category $\CC$, the \emph{Chern character}
    is the $K$-linear map
    \[
        h_{\CC}\colon K_0(\CC) \otimes_\Z K \to  \HHz(\CC), \quad [x]\otimes 1 \mapsto [\Id_x\colon x \to x]
    \]
    \end{definition}
    
    \begin{proposition}[Proposition
    2.4 in \cite{MR3653092}]
    \label{prop:Chern}
    If $K=\k$ is a perfect field and $\CC$ is Krull-Schmidt with a
    finite-dimensional endomorphism algebra for each indecomposable object, then
    the Chern character $h_{\CC}$ is injective.
    \end{proposition}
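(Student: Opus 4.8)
The plan is to reduce the statement to the injectivity of the Hattori--Stallings trace map for a finite-dimensional algebra over a perfect field, and then to prove the latter block by block via the Wedderburn decomposition.

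First I would localise. Suppose $\sum_{j}\lambda_j\,([x_j]\otimes 1)$ lies in the kernel of $h_\CC$; by Proposition~\ref{prop:KS} we may take the $x_j$ to be pairwise non-isomorphic indecomposables and $\lambda_j\in\k$. The relation $\sum_j\lambda_j[\Id_{x_j}]=0$ in $\HHz(\CC)$ is witnessed by finitely many commutator relations, hence involves only finitely many indecomposable summands; passing to the full additive subcategory $\CC_0$ generated by these (again Krull--Schmidt, with the same finiteness of endomorphism algebras), we may assume $\CC_0\simeq A\text{-proj}$ for a finite-dimensional $\k$-algebra $A=\End_{\CC_0}(\bigoplus_i x_i)$. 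Under this equivalence $h_\CC$ becomes the Chern character $K_0(A)\otimes_\Z\k\to\HHz(A)=A/[A,A]$, sending an indecomposable projective to the class of an idempotent cutting it out.

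Next I would pass to the semisimple quotient. The Jacobson radical $J=\operatorname{rad}(A)$ is nilpotent, so idempotents lift along $A\twoheadrightarrow\bar A:=A/J$ and $K_0(A)\cong K_0(\bar A)$, while $A/[A,A]\twoheadrightarrow\bar A/[\bar A,\bar A]$ is surjective and compatible with the Chern characters; hence it suffices to treat $\bar A$ semisimple. Writing $\bar A\cong\prod_i M_{n_i}(D_i)$ by Wedderburn, with $D_i$ finite-dimensional division $\k$-algebras, the Chern character splits as a direct sum over $i$; using the standard isomorphism $M_n(D)/[M_n(D),M_n(D)]\cong D/[D,D]$ induced by summing the diagonal entries, the $i$-th component sends $1\in\k$ to $[1_{D_i}]\in D_i/[D_i,D_i]$. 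Thus everything reduces to the claim that $[1_D]\neq0$ in $D/[D,D]$ for every finite-dimensional division algebra $D$ over the perfect field $\k$.

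The hard part is exactly this claim, and it is where perfectness is used. Here I would use the reduced trace $\operatorname{trd}\colon D\to E:=Z(D)$: it is $\k$-linear, it vanishes on $[D,D]$ (it becomes the matrix trace after base change to a splitting field), and $\operatorname{trd}(1)=m\cdot 1_E$ with $m=\deg_E(D)$. Since $\k$ is perfect, its Brauer group has no $\operatorname{char}(\k)$-torsion, so $m$ is prime to $\operatorname{char}(\k)$ and $\operatorname{trd}(1)\neq0$ in $E$; composing $\operatorname{trd}$ with any $\k$-linear functional $E\to\k$ not killing $\operatorname{trd}(1)$ gives a $\k$-linear functional on $D$ that vanishes on $[D,D]$ but not on $1$, so $[1_D]\neq0$. (Without perfectness this genuinely fails: over an imperfect field of characteristic $p$ there are degree-$p$ division algebras with $[1_D]=0$, and the Chern character is not injective.) Assembling the reductions, $\sum_j\lambda_j[\Id_{x_j}]=0$ now forces each $\lambda_j[1_{D_{i_j}}]=0$ and hence each $\lambda_j=0$.
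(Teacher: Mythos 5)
The paper does not prove this statement; it simply cites it from \cite{MR3653092}, so there is no in-paper proof to compare with, and I will assess your argument on its own. The core of your proof — reduce to the Hattori--Stallings picture, pass to the semisimple quotient, identify $M_n(D)/[M_n(D),M_n(D)]\cong D/[D,D]$, and finish with the reduced trace together with the fact that $\operatorname{Br}(E)$ has no $p$-torsion for $E$ a perfect field of characteristic $p$ (and a finite extension of a perfect field is again perfect) — is correct and is, I believe, essentially the argument in the reference. The role of perfectness is correctly isolated.

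There is one step that does not follow from the hypotheses as stated. You assert that $A:=\End_{\CC_0}\bigl(\bigoplus_i x_i\bigr)$ is finite-dimensional, but the hypothesis only gives finite-dimensionality of $\End(x)$ for each \emph{indecomposable} $x$; it says nothing about $\Hom(x_i,x_j)$ for $i\neq j$, and a Krull--Schmidt category can certainly have infinite-dimensional cross-Homs while all indecomposable endomorphism rings are finite-dimensional. Without finite-dimensionality of $A$ the appeals to nilpotence of $\operatorname{rad}(A)$, lifting of idempotents, and $K_0(A)\cong K_0(\bar A)$ are not justified. (This is likely an artifact of the paper's restatement: the reference works in a finitary setting with all Hom-spaces finite-dimensional, which also holds in the paper's application to $\mathrm{Kar}(\Ta_2(s,t))^\oplus$, so your proof does match the actual situation; but under the weaker hypothesis quoted here the argument needs adjusting.) The fix is to avoid $A$ altogether: for each isomorphism class of indecomposable $x$ with local ring $E_x$ and residue division algebra $D_x=E_x/\operatorname{rad}(E_x)$, send an endomorphism of an object $\bigoplus_i x_i$ to the sum of the images in $D_{x_i}/[D_{x_i},D_{x_i}]$ of its diagonal blocks. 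This descends to a functional $\HHz(\CC)\to\bigoplus_{[x]}D_x/[D_x,D_x]$ because for non-isomorphic indecomposables $x,y$ and morphisms $f\colon x\to y$, $g\colon y\to x$ the composite $gf$ is a non-unit in the local ring $E_x$, hence lies in $\operatorname{rad}(E_x)$ and dies in $D_x$. This functional sends $[\Id_x]\mapsto[1_{D_x}]$, and your reduced-trace and Brauer-group argument then applies verbatim to conclude.
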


Using these tools, we can now prove:

\begin{theorem} \label{thm:infdim} Let $p\geq 2$. Then $\Sztwo(S^1\times
B^3;S^1\times P_p,\k)$ is infinite-dimensional in bidegree $(0,0)$.
\end{theorem}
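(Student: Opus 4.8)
The plan is to deduce the theorem from the Hochschild-homology reformulation of Corollary~\ref{cor:HH} together with the Chern character injectivity of Proposition~\ref{prop:Chern}, reducing the problem to exhibiting infinitely many pairwise non-isomorphic indecomposable objects in a completion of the $3$-ball category $\Sztwo(B^3;P_p,\k)$.

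First I would set $\CC:=\mathrm{Kar}\bigl(\Sztwo(B^3;P_p,\k)^{\oplus}\bigr)$, the idempotent completion of the additive hull. By Fact~\ref{fact:trace2} this does not change the zeroth Hochschild homology, so by Corollary~\ref{cor:HH} it suffices to show that $\HHz(\CC)$ is infinite-dimensional in bidegree $(0,0)$. The morphism spaces of $\Sztwo(B^3;P_p,\k)$ are Khovanov--Rozansky homologies of links (Definition~\ref{def:threeball}), hence finite-dimensional over $\k$; thus every object of $\CC$ has a finite-dimensional endomorphism algebra, and an idempotent-complete additive category with this property is Krull--Schmidt (finite-dimensional endomorphism rings are semiperfect, so their identities split into primitive idempotents with local corners). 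Since $\k$ is perfect, Proposition~\ref{prop:Chern} then yields an injection $h_\CC\colon K_0(\CC)\otimes_\Z\k\hookrightarrow\HHz(\CC)$; its image lies in bidegree $(0,0)$ because $h_\CC([x])=[\Id_x]$ and identity morphisms have bidegree $(0,0)$. By Proposition~\ref{prop:KS}, $K_0(\CC)$ is free abelian on the isomorphism classes of indecomposable objects of $\CC$, so it remains to produce infinitely many of these when $p\geq 2$.

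For this I would use that $B_p$ is nontrivial for $p\geq 2$. For $k\geq 1$, let $T_k:=\sigma_1^{2k}\in B_p$, viewed as a $(p,p)$-tangle and hence an object of $\Sztwo(B^3;P_p,\k)\subseteq\CC$. Since $\sigma_1^{2k}$ is an invertible tangle (inverse $\sigma_1^{-2k}$) and the tangle invariant $\KhRbracket{-}$ is monoidal, each $T_k$ is an invertible object of the monoidal category $\CC$, so $(-)\otimes T_k$ is an autoequivalence of $\CC$ taking the monoidal unit $\mathbf 1$ (the trivial tangle) to $T_k$. Now $\mathbf1$ is indecomposable: by Definition~\ref{def:threeball}, $\End_\CC(\mathbf1)$ is (a grading shift of) the Khovanov--Rozansky homology of the $p$-component unlink $\mathbf1\cup_{P_p}\overline{\mathbf1}$, which with its merging-cobordism product is the commutative Frobenius algebra $(\k[X]/(X^2))^{\otimes p}$, a local ring; hence each $T_k$ is indecomposable. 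Finally, again by Definition~\ref{def:threeball}, $\Hom_\CC(\mathbf1,T_k)=\KhRN(T_k\cup_{P_p}\overline{\mathbf1})\{p(N-1)\}$, and $T_k\cup_{P_p}\overline{\mathbf1}$ contains the $(2,2k)$ torus link as a split sublink (the two strands moved by $\sigma_1$ close up to it). Since $\dim_\k\KhRN$ of the $(2,n)$ torus links is unbounded in $n$ (Khovanov's computation), the spaces $\Hom_\CC(\mathbf1,T_k)$ have unbounded dimension, so the $T_k$ with $k\geq1$ form an infinite family of pairwise non-isomorphic indecomposable objects of $\CC$. Therefore $\dim_\k\bigl(K_0(\CC)\otimes_\Z\k\bigr)=\infty$, hence $\dim_\k\HHz(\CC)_{(0,0)}=\infty$, hence $\dim_\k\Sztwo(S^1\times B^3;S^1\times P_p,\k)_{(0,0)}=\infty$.

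The steps demanding the most care are the two endomorphism-ring identifications: computing explicitly the links $\mathbf1\cup_{P_p}\overline{\mathbf1}$ and $T_k\cup_{P_p}\overline{\mathbf1}$ representing the Hom-spaces of $\Sztwo(B^3;P_p,\k)$ used above, and verifying that the merging-cobordism composition makes $\End_\CC(\mathbf1)$ the \emph{local} algebra $(\k[X]/(X^2))^{\otimes p}$, so that $\mathbf1$ --- and hence each $T_k$ --- is indecomposable. One should also state carefully the general fact that an idempotent-complete additive category with finite-dimensional endomorphism algebras over a perfect field satisfies the hypotheses of Proposition~\ref{prop:Chern}. The remaining input, unboundedness of $\dim_\k\KhRN$ of the $(2,n)$ torus links, is classical and presents no difficulty.
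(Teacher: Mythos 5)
Your argument is correct and follows essentially the same route as the paper: reduce via Corollary~\ref{cor:HH} and Fact~\ref{fact:trace2} to computing $\HHz$ of the completed $3$-ball category, establish the Krull--Schmidt property, invoke the Chern character injectivity of Proposition~\ref{prop:Chern} to reduce to counting indecomposable iso-classes, and produce infinitely many of these from invertible braid objects together with the observation that the trivial tangle has local endomorphism algebra $(\k[X]/(X^2))^{\otimes p}$.

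Two places where your write-up is more self-contained than the paper's are worth noting. First, for the Krull--Schmidt property you give a clean intrinsic argument (finite-dimensional endomorphism algebras over a field are semiperfect, so in an idempotent-complete additive category their primitive idempotents split, yielding unique decompositions with local endomorphism rings), whereas the paper embeds the $3$-ball category into the ambient enriched homotopy category $H^\bullet(\Foamtwodg)^*(s,t)$ and cites that the latter is Krull--Schmidt via a comparison with Bar-Natan's cobordism category. Second, for pairwise non-isomorphism the paper asserts that for powers of a single Artin generator this is ``straightforward to check by hand'' (or alternatively invokes Khovanov--Seidel faithfulness); you supply the hand check explicitly by noting that $T_k\cup_{P_p}\overline{\mathbf 1}$ is the split union of the $(2,2k)$ torus link with a $(p-2)$-component unlink, whose total Khovanov homology dimension is unbounded in $k$, so the spaces $\Hom_\CC(\mathbf 1,T_k)$ distinguish infinitely many of the $T_k$. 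Both refinements fill genuine small gaps in the exposition while retaining the paper's overall strategy.
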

\begin{proof} We let $s=t= p\; \mathrm{points}$ and again have isomorphisms 
    \[ \Sztwo(S^1\times B^3;S^1\times P_p,\k) \cong \HHz(\Sztwo(B^3;P_p,\k))
    \cong \HHz(\Ta_2(s,t)) \] and we consider the category $\Ta_2(s,t)$ as a
    full subcategory of the enriched morphism category
    $H^\bullet(\Foamtwodg)^*(s,t)$.
    
    The $\k$-linear, additive category $H^\bullet(\Foamtwodg)^*(s,t)$ is
    Krull-Schmidt and hence idempotent complete; see e.g. the discussion in
    \cite[Sections 4.5, 4.8]{MR4380678} based on Bar-Natan's category, which is
    equivalent to $\Foamtwo$ by \cite{2019arXiv190312194B}.

    Now $\mathrm{Kar}(\Ta_2(s,t))^\oplus$ may be considered as an additive,
    idempotent complete full subcategory of $H^\bullet(\Foamtwodg)^*$; it is
    thus itself Krull--Schmidt.  We have $\HHz(\Ta_2(s,t))\cong
    \HHz(\mathrm{Kar}(\Ta_2(s,t))^\oplus)$ by Fact~\ref{fact:trace2}. Therefore, it suffices to compute its zeroth
    Hochschild homology of $\mathrm{Kar}(\Ta_2(s,t))^\oplus$.

    It is straightforward to check that the objects of
    $\mathrm{Kar}(\Ta_2(s,t))^\oplus$ have finite-dimensional endomorphism
    algebras, and since $\k$ is perfect, the Chern character \[h\colon
    K_0(\mathrm{Kar}(\Ta_2(s,t))^\oplus) \otimes_\Z \k \to
    \HHz(\mathrm{Kar}(\Ta_2(s,t))^\oplus)\] is injective; see
    Proposition~\ref{prop:Chern}. To prove that $\Sztwo(S^1\times B^3;S^1\times
    P_p,\k)$ is infinite-dimensional in bidegree $(0,0)$, it is thus sufficient
    to show that $K_0(\mathrm{Kar}(\Ta_2(s,t))^\oplus) \otimes_\Z \k$ is
    infinite-dimensional. 
    
    Moreover, $K_0(\mathrm{Kar}(\Ta_2(s,t))^\oplus)$ is free abelian on
    the isomorphism classes of its indecomposable objects; cf.
    Proposition~\ref{prop:KS}. Thus, we will be done once we can exhibit infinitely
    many indecomposable and pairwise non-isomorphic complexes appearing as
    (direct summands in) tangle complexes. 
    
    We will see that such complexes can be constructed as invariants of braids.
    Clearly, for $p\geq 2$ there are infinitely many braids on $p$ strands.
    Moreover, the braid complexes are invertible under tensoring with the
    complex for the respective inverse braid. Since the complex of the trivial
    braid is indecomposable (its endomorphism algebra
    $\left(\k[X]/(X^2)\right)^{\otimes p}$ is local), so are the complexes for
    all other braids. It is also known that all braid complexes are pairwise
    non-isomorphic. This can e.g. be deduced from the faithfulness of the braid
    group action of Khovanov--Seidel~\cite{MR1862802}. For us, however, it is
    enough to consider infinitely many braids that are powers of a single Artin
    braid generator. For these complexes it is straightforward to check by hand
    that they are pairwise non-isomorphic. 
\end{proof}

Observe that Theorem~\ref{Thm:Pp} from the introduction is a combination of
Corollary~\ref{cor:SoneBthree}, Proposition~\ref{prop:twopoint}, and
Theorem~\ref{thm:infdim}.

\subsection{Comparison with the Rozansky--Willis invariant}
\label{sec:RW}
In \cite{Rozansky}, Rozansky defined a Khovanov-type homology theory for
(null-homologous) links in $S^1 \times S^2$. His construction was generalized by
Willis in \cite{Willis} to null-homologous links in $Y=\#^m(S^1 \times S^2)$ for
any $m$. We will denote the Rozansky-Willis homology of $L \subset Y$ by
$\RW(L)$. Just like the skein lasagna module $\Sztwo(\Wone, L)$, the invariant
$\RW(L)$ can be computed from a Kirby diagram for $\Wone = \natural^m (S^1 \times
B^3)$ including the link $L$, so it is a natural question whether they are
related.

The first observation is that the two invariants are not always isomorphic.
Indeed, in any specific bidegree, $\RW(L)$ is defined as the Khovanov homology
of the link in $S^3$ obtained from $L$ by adding sufficiently many twists in
place of the \oneh-handles. It follows that $\RW(L)$ has finite rank in each
bidegree, whereas this may not hold for $\Sztwo(\Wone, L)$, as we have seen in
Theorem~\ref{thm:infdim}. Another concrete example is for $m=1$, where
$L=S^1\times P_1$ yields a $4$-dimensional lasagna skein module according to
Proposition~\ref{prop:twopoint}, but $\RW(L)\cong \HH_\bullet(\k[X]/(X^2))$ is
infinite-dimensional.

However, $\RW(L)$ and $\Sztwo(\Wone, L)$ are conceptually similar, as both arise as
the Hochschild homology of a chain complex associated to a tangle $T$ that closes to
the link $L$: 
\begin{itemize}
    \item $\RW(L)$ is computed as the Hochschild homology of a dg bimodule (for
    a tensor product of $m$ of Khovanov's arc rings) associated to the tangle $T$, as
    defined for $m=1$ by Khovanov in \cite{MR1928174} and extended by parabolic
    induction to $m>1$. Here the homological degree of the dg bimodule gets
    mixed with the Hochschild degree, and so the resulting invariant is a
    bigraded vector space.
    \item $\Sztwo(\Wone, L)$ can be computed via Theorem~\ref{thm:mainonehandle}
    (and for $m=1$ even more concretely in Corollary~\ref{cor:HH}) as the
    zeroth Hochschild homology of an equivalent dg bimodule; see
    Remark~\ref{rem:arcrings} for the comparison. In fact, the higher blob
    homology from \cite{MWW}, which does not play a role for skein lasagna modules, corresponds
    to higher Hochschild homology. The main difference, however, is that the dg
    bimodule is not considered as an object of a dg or triangulated category,
    but of the \emph{linear} cohomology category. Accordingly, the full blob
    homology is triply-graded, with the blob/Hochschild grading separated from
    the homological grading.  
\end{itemize} 
Based on this comparison, one may expect $\Sztwo(\Wone, L)$ and, more generally, the
full blob homology $\S_*(\Wone; L)$ to appear on the $E_2$ page of a spectral
sequence converging to $\RW(L)$. Suppose that one can find a suitable projective
resolution in terms of tangle complexes, which simultaneously allows the
computation of blob homology as well as the dg version of Hochschild homology.
Then, by tensoring with the dg bimodule associated to the tangle, one obtains a
double complex of (quantum) graded vector spaces, where the vertical
differential carries Hochschild degree and the horizontal differential carries
homological degree. The homology of the total complex would compute $\RW(L)$. To
obtain $\Sztwo(\Wone, L)$, one first takes homology in the rows (thus computing the
Khovanov homologies of links of the form $T_i\cup \overline{T}$ where $T_i$
appears in the resolution), and only then the zeroth homology of the induced
differential coming from the resolution. We will not pursue this comparison further in the present paper, but
remark that there is precedent for interesting invariants appearing on $E_2$
pages of spectral sequences that come from separating Hochschild and homological
degrees, namely the triply-graded HOMFLYPT link homology; see \cite[Section
6]{MR3447099}.\medskip
 
In general, one does not expect a map from the $E_2$ page of a spectral sequence
to its $E_\infty$ page. However, since $\Sztwo(\Wone, L)$ appears as the lowest row
on the $E_2$ page, the above discussion suggests the existence of a natural map
$$ \Sztwo(\Wone, L) \to \RW(L).$$
In the following we propose a candidate for such a map.

In Willis's construction of $\RW(L)$,
we represent $\del \Wone = Y$ by $m$ pairs of spheres in the plane, with the spheres
in each pair being identified (that is, we add a handle). This is the same as
the usual Kirby diagram of $\Wone$. The link $L$ may intersect each handle a number
of times, as in this picture:
\[ 
\begingroup%
  \makeatletter%
  \providecommand\color[2][]{%
    \errmessage{(Inkscape) Color is used for the text in Inkscape, but the package 'color.sty' is not loaded}%
    \renewcommand\color[2][]{}%
  }%
  \providecommand\transparent[1]{%
    \errmessage{(Inkscape) Transparency is used (non-zero) for the text in Inkscape, but the package 'transparent.sty' is not loaded}%
    \renewcommand\transparent[1]{}%
  }%
  \providecommand\rotatebox[2]{#2}%
  \newcommand*\fsize{\dimexpr\f@size pt\relax}%
  \newcommand*\lineheight[1]{\fontsize{\fsize}{#1\fsize}\selectfont}%
  \ifx\svgwidth\undefined%
    \setlength{\unitlength}{179.8843578bp}%
    \ifx\svgscale\undefined%
      \relax%
    \else%
      \setlength{\unitlength}{\unitlength * \real{\svgscale}}%
    \fi%
  \else%
    \setlength{\unitlength}{\svgwidth}%
  \fi%
  \global\let\svgwidth\undefined%
  \global\let\svgscale\undefined%
  \makeatother%
  \begin{picture}(1,0.17207847)%
    \lineheight{1}%
    \setlength\tabcolsep{0pt}%
    \put(0.00786129,0.12929934){\makebox(0,0)[lt]{\lineheight{1.25}\smash{\begin{tabular}[t]{l}$L$\end{tabular}}}}%
    \put(0,0){\includegraphics[width=\unitlength,page=1]{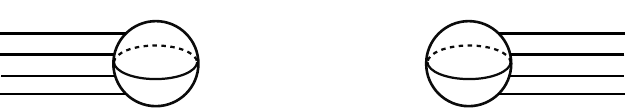}}%
  \end{picture}%
\endgroup%
 \] Let $L(n_1, \dots, n_m)$ be the link in $S^3$ obtained
from $L$ by inserting $n_i$ full twists in place of the $i^{\operatorname{th}}$
handle, as shown here:
\[ 
\begingroup%
  \makeatletter%
  \providecommand\color[2][]{%
    \errmessage{(Inkscape) Color is used for the text in Inkscape, but the package 'color.sty' is not loaded}%
    \renewcommand\color[2][]{}%
  }%
  \providecommand\transparent[1]{%
    \errmessage{(Inkscape) Transparency is used (non-zero) for the text in Inkscape, but the package 'transparent.sty' is not loaded}%
    \renewcommand\transparent[1]{}%
  }%
  \providecommand\rotatebox[2]{#2}%
  \newcommand*\fsize{\dimexpr\f@size pt\relax}%
  \newcommand*\lineheight[1]{\fontsize{\fsize}{#1\fsize}\selectfont}%
  \ifx\svgwidth\undefined%
    \setlength{\unitlength}{184.12322457bp}%
    \ifx\svgscale\undefined%
      \relax%
    \else%
      \setlength{\unitlength}{\unitlength * \real{\svgscale}}%
    \fi%
  \else%
    \setlength{\unitlength}{\svgwidth}%
  \fi%
  \global\let\svgwidth\undefined%
  \global\let\svgscale\undefined%
  \makeatother%
  \begin{picture}(1,0.15885591)%
    \lineheight{1}%
    \setlength\tabcolsep{0pt}%
    \put(0.47200511,0.06281895){\makebox(0,0)[lt]{\lineheight{1.25}\smash{\begin{tabular}[t]{l}$n_i$\end{tabular}}}}%
    \put(0,0){\includegraphics[width=\unitlength,page=1]{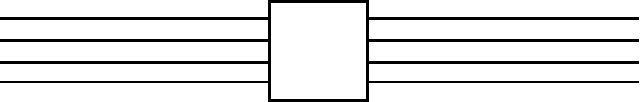}}%
  \end{picture}%
\endgroup%
 \] The homology $\RW(L)$ can be computed as the Khovanov
homology of the link $L(n_1, \dots, n_m)$ for $n_i \gg 0$, with some suitable
shifts in grading. Note that $L(n_1, \dots, n_m)$ depends on the choice of a
path between the attaching spheres of each \oneh-handle; however, it can be
shown that $\RW(L)$ is independent of these choices up to isomorphism.

Consider now the skein lasagna module $\Sztwo(\Wone, L)$. Let us attach an
$n_i$-framed \twoh-handle through the $i^{\operatorname{th}}$ \oneh-handle:
\[ 
\begingroup%
  \makeatletter%
  \providecommand\color[2][]{%
    \errmessage{(Inkscape) Color is used for the text in Inkscape, but the package 'color.sty' is not loaded}%
    \renewcommand\color[2][]{}%
  }%
  \providecommand\transparent[1]{%
    \errmessage{(Inkscape) Transparency is used (non-zero) for the text in Inkscape, but the package 'transparent.sty' is not loaded}%
    \renewcommand\transparent[1]{}%
  }%
  \providecommand\rotatebox[2]{#2}%
  \newcommand*\fsize{\dimexpr\f@size pt\relax}%
  \newcommand*\lineheight[1]{\fontsize{\fsize}{#1\fsize}\selectfont}%
  \ifx\svgwidth\undefined%
    \setlength{\unitlength}{179.8843578bp}%
    \ifx\svgscale\undefined%
      \relax%
    \else%
      \setlength{\unitlength}{\unitlength * \real{\svgscale}}%
    \fi%
  \else%
    \setlength{\unitlength}{\svgwidth}%
  \fi%
  \global\let\svgwidth\undefined%
  \global\let\svgscale\undefined%
  \makeatother%
  \begin{picture}(1,0.17207847)%
    \lineheight{1}%
    \setlength\tabcolsep{0pt}%
    \put(0.00786129,0.12929934){\makebox(0,0)[lt]{\lineheight{1.25}\smash{\begin{tabular}[t]{l}$L$\end{tabular}}}}%
    \put(0,0){\includegraphics[width=\unitlength,page=1]{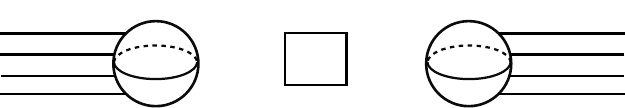}}%
    \put(0.47615324,0.05865839){\makebox(0,0)[lt]{\lineheight{1.25}\smash{\begin{tabular}[t]{l}$n_i$\end{tabular}}}}%
    \put(0,0){\includegraphics[width=\unitlength,page=2]{Lhandle.pdf}}%
  \end{picture}%
\endgroup%
 \] The \twoh-handles cancel the corresponding
\oneh-handles, so the result is a Kirby diagram for $B^4$, whose boundary is
$S^3$. The link $L$ becomes $L(n_1, \dots, n_m) \subset S^3$, as can be seen by
doing a series of handle slides of the arcs of $L$ over the \twoh-handle:
\[ 
\begingroup%
  \makeatletter%
  \providecommand\color[2][]{%
    \errmessage{(Inkscape) Color is used for the text in Inkscape, but the package 'color.sty' is not loaded}%
    \renewcommand\color[2][]{}%
  }%
  \providecommand\transparent[1]{%
    \errmessage{(Inkscape) Transparency is used (non-zero) for the text in Inkscape, but the package 'transparent.sty' is not loaded}%
    \renewcommand\transparent[1]{}%
  }%
  \providecommand\rotatebox[2]{#2}%
  \newcommand*\fsize{\dimexpr\f@size pt\relax}%
  \newcommand*\lineheight[1]{\fontsize{\fsize}{#1\fsize}\selectfont}%
  \ifx\svgwidth\undefined%
    \setlength{\unitlength}{184.24971317bp}%
    \ifx\svgscale\undefined%
      \relax%
    \else%
      \setlength{\unitlength}{\unitlength * \real{\svgscale}}%
    \fi%
  \else%
    \setlength{\unitlength}{\svgwidth}%
  \fi%
  \global\let\svgwidth\undefined%
  \global\let\svgscale\undefined%
  \makeatother%
  \begin{picture}(1,1.37053126)%
    \lineheight{1}%
    \setlength\tabcolsep{0pt}%
    \put(0.44412369,0.78009397){\makebox(0,0)[lt]{\lineheight{1.25}\smash{\begin{tabular}[t]{l}. . . \end{tabular}}}}%
    \put(0,0){\includegraphics[width=\unitlength,page=1]{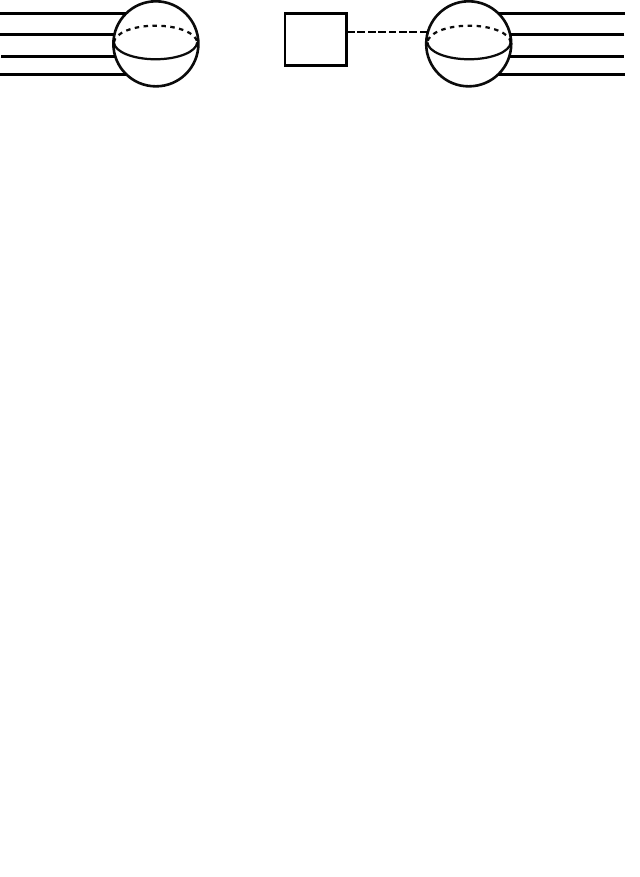}}%
    \put(0.46487193,1.29079397){\makebox(0,0)[lt]{\lineheight{1.25}\smash{\begin{tabular}[t]{l}$n_i$\end{tabular}}}}%
    \put(0,0){\includegraphics[width=\unitlength,page=2]{Lhandleslide.pdf}}%
    \put(0.46487545,0.99063497){\makebox(0,0)[lt]{\lineheight{1.25}\smash{\begin{tabular}[t]{l}$n_i$\end{tabular}}}}%
    \put(0,0){\includegraphics[width=\unitlength,page=3]{Lhandleslide.pdf}}%
    \put(0.46629172,0.46052723){\makebox(0,0)[lt]{\lineheight{1.25}\smash{\begin{tabular}[t]{l}$n_i$\end{tabular}}}}%
    \put(0,0){\includegraphics[width=\unitlength,page=4]{Lhandleslide.pdf}}%
    \put(0.46432566,0.06082728){\makebox(0,0)[lt]{\lineheight{1.25}\smash{\begin{tabular}[t]{l}$n_i$\end{tabular}}}}%
  \end{picture}%
\endgroup%
 \] where in the last step we cancelled the
handles. (Compare Figure 5.13 in \cite{GS}.)

The \twoh-handle attachments give a cobordism $Z$ from $Y=\#^m(S^1 \times S^2)$ to
$S^3$. There is also an embedded annular cobordism $S\subset Z$ from $L$ to
$L(n_1, \dots, n_m)$. As discussed in Section~\ref{sec:gluing}, these cobordisms
induce a map on skein lasagna modules:
$$\Psi_{Z;S} : \Sztwo(\Wone; L) \to \Sztwo(B^4; L(n_1, \dots, n_m)) \cong \Kh(L(n_1,
\dots, n_m)).$$ Our conjecture is that these maps stabilize as $n_i \to \infty$,
giving a well-defined morphism from $\Sztwo(\Wone, L)$ to $\RW(L)$.

\subsection{Speculations on homotopy coherent \fourm-manifold invariants}
\label{sec:spec}
We expect that the above $E_2$-page-of-spectral-sequence relationship between
$\Sztwo$ and $H_{RW}^{*,*}$ for $(\natural^m(S^1\times B^3), L)$
generalizes to $(\Wgeneric,L)$ for arbitrary \fourm-manifolds $\Wgeneric$ and links $L$. We give a brief sketch of
the reasoning below.

Recall that the Khovanov-Rozansky invariants upon which $\Sz$ is built
assign chain complexes to links $L$ and chain maps to link cobordisms, but it is
not known that this assignment is functorial (or even well-defined) at the level
of complexes.  The proof that the homology of these complexes is functorial in
the appropriate sense involves showing that certain chain maps are homotopic.
If this result could be strengthened to show that certain homotopies between the
chain maps are themselves 2nd-order homotopic, and so on for all higher orders,
then one could construct a functorial assignment of chain complexes to links in
$S^3$ and chain maps to link cobordisms.

Let us assume that these conjectured ``fully coherent" $\glN$ chain complexes for links exist. Then, they can be
repackaged as a pivotal $(\infty,4)$-category (with composition maps defined in
terms of link cobordisms, as in \cite{MWW}).  This $(\infty,4)$-category can in
turn be fed into the machinery of Section 6.3 of \cite{MWblob} (which is closely
related to topological chiral homology \cite{lurie-higher-algebra} and
factorization homology \cite{MR3431668,MR3595895}).  The result is a
chain-complex-valued invariant $\Sinf(\Wgeneric,L)$. Its construction involves taking a homotopy colimit of a
poset built out of the set of all ball decompositions of $\Wgeneric$ and refinement
relationships between these ball decompositions.  Concretely, we construct a
double complex, with horizontal differentials coming from the $\glN$ complexes of links,
and vertical differentials coming from the combinatorics of refining ball
decompositions of $\Wgeneric$. There is a spectral sequence associated to this double
complex, which is itself an invariant of $(\Wgeneric,L)$. 

The $E_2$ page of this spectral sequence involves first taking homology in the
horizontal direction, then computing homology with respect to vertical
differentials.  It is easy to see that this $E_2$ page is exactly the blob
homology $\S_*(\Wgeneric; L)$ assigned to $(\Wgeneric,L)$ in \cite{MWW} (i.e. by taking $\KhR$ homology early
instead of working with the $\glN$ complex).  (In this paper we have focused on
blob-degree zero, corresponding to the bottom row of the $E_2$ page of the
spectral sequence.)

When $\Wone = \natural^m(S^1\times B^3)$ and $N=2$, we expect the total homology of
$\Sinf(\Wone,L)$ to coincide with the Rozansky--Willis invariants. The Hochschild
differentials of the previous subsection should be (homotopy equivalent to)
special cases of the vertical differentials above.



\end{document}